\newtheorem*{corollary*}{Corollary}
\newtheorem*{conjecture*}{Conjecture}
\newtheorem*{example*}{Example}
\newtheorem*{theorem*}{Theorem}
\newtheorem*{proposition*}{Proposition}
\newtheorem{theorem}{Theorem}[section]
\newtheorem{corollary}[theorem]{Corollary}
\newtheorem{lemma}[theorem]{Lemma}
\newtheorem{proposition}[theorem]{Proposition}
\newtheorem*{claim*}{Claim}
\newtheorem{conjecture}[theorem]{Conjecture}
\newtheorem{thmintroduction}{Theorem}
\theoremstyle{definition}
\newtheorem{problem}[theorem]{Problem}
\newtheorem{remark}[theorem]{Remark}
\newtheorem{example}[theorem]{Example}
\theoremstyle{remark}
\numberwithin{equation}{section}
\theoremstyle{plain}
\newtheoremstyle{cited}%
  {4pt}
  {4pt}
  {\itshape}
  {}
  {\bfseries}
  {.}
  {0.5em}
  {\thmname{#1}  \thmnote{\normalfont#3}}
\theoremstyle{cited}
\newtheorem*{corintroduction}{Corollary}
\renewcommand*\env@matrix[1][\
arraystretch]{%
  \edef\arraystretch{#1}%
  \hskip -\arraycolsep
  \let\@ifnextchar\new@ifnextchar
  \array{*\c@MaxMatrixCols c}}
\newcommand{\Thmref}{Theorem~}
\newcommand{\proj}{\operatorname{proj}}
\newcommand{\inj}{\operatorname{inj}}
\newcommand{\Ext}{\operatorname{Ext}}
\newcommand{\domdim}{\operatorname{domdim}}
\newcommand{\gldim}{\operatorname{gldim}}
\newcommand{\End}{\operatorname{End}}
\newcommand{\pdim}{\operatorname{pdim}}
\newcommand{\idim}{\operatorname{idim}}
\newcommand{\Tr}{\operatorname{tr}}
\newcommand{\transpose}{\operatorname{Tr}}
\newcommand{\Hom}{\operatorname{Hom}}
\newcommand{\im}{\operatorname{im}}
\newcommand{\codomdim}{\operatorname{codomdim}}
\newcommand{\add}{\operatorname{\mathrm{add}}}
\renewcommand{\top}{\operatorname{\mathrm{top}}}
\newcommand{\rad}{\operatorname{\mathrm{rad}}}
\newcommand{\soc}{\operatorname{\mathrm{soc}}}
\newcommand{\depth}{\operatorname{depth}}
\newcommand{\rmod}{\operatorname{mod-}\!\!}
\newcommand{\cogen}{\operatorname{Cogen}}
\newcommand{\gen}{\operatorname{Gen}}
\begin{document}

\author[T. Cruz]{Tiago Cruz}
\address[Tiago Cruz]{Institut f\"ur Algebra und Zahlentheorie,
Universit\"at Stuttgart, Germany }
\email{tiago.cruz@mathematik.uni-stuttgart.de}

\author[R. Marczinzik]{Ren\'{e} Marczinzik}
\address[Ren\'{e} Marczinzik]{Mathematical Institute of the University of Bonn, Endenicher Allee 60, 53115 Bonn, Germany}
\email{marczire@math.uni-bonn.de}
\title{An Auslander-Buchsbaum formula for higher
Auslander algebras and applications}
\date{\today}

\begin{abstract}
We provide a new non-commutative generalisation of the Auslander-Buchsbaum formula for higher Auslander algebras and use this to show that the class of tilted Auslander algebras, studied recently by Zito, and QF-1 algebras of global dimension at most 2, studied by Ringel in the 1970s, coincide. We furthermore give an explicit classification of this class of algebras and present generalisations to higher homological dimensions with a new local characterisation of QF-1 higher Auslander algebras. 
\end{abstract}

\keywords{QF-1 algebras, homological dimensions, balanced modules, higher Auslander algebras}
\subjclass[2020]{Primary: 16G10, Secondary: 16E10, 16E65, 16S50, 16S90}

\maketitle

\vspace{-0.5cm}

\section{Introduction}
One of the central results in commutative algebra is the Auslander-Buchsbaum formula, which states that 
\begin{equation} 
\depth M+ \pdim M= \depth R
\end{equation}
for commutative local noetherian rings $R$ and modules $M$ of finite projective dimension.
For regular commutative local rings, this formula can be generalised  as follows using their Calabi-Yau properties:
\begin{theorem} \label{CYregularrings}
Let $R$ be a commutative local noetherian ring with maximal ideal $\mathfrak{m}$ and $K:=R/\mathfrak{m}$ and let $M$ be an $R$-module.
\begin{enumerate}
    \item $\depth M= \inf \{ n \geq 0 \mid \Ext_R^n(K,M) \neq 0\}.$
    \item $\pdim M= \sup \{ n \geq 0 \mid \Ext_R^n(M,K) \neq 0 \}$.
    \item If $R$ is additionally regular, that is $d=\gldim R= \depth R$, then
    $\Ext_R^n(K,M) \cong \Ext_R^{d-n}(M,K).$
\end{enumerate}
\end{theorem}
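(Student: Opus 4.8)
The plan is to handle the three parts in turn. Parts (1) and (2) are the classical homological characterisations of depth and projective dimension over a noetherian local ring, and part (3) is a Koszul self-duality statement special to the regular case. Throughout I take $M$ to be finitely generated, which is the natural setting for these formulas.

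For part (1) I would prove $\depth M = \inf\{n \geq 0 : \Ext^n_R(K,M) \neq 0\}$ by induction on $t := \depth M$. In the base case $t=0$ the maximal ideal $\mathfrak{m}$ consists of zero-divisors on $M$, hence is an associated prime, so there is an embedding $K \hookrightarrow M$ and $\Ext^0_R(K,M) = \Hom_R(K,M) \neq 0$. For $t>0$ I would choose an $M$-regular element $x \in \mathfrak{m}$ and apply $\Hom_R(K,-)$ to $0 \to M \xrightarrow{x} M \to M/xM \to 0$. Since $x$ annihilates $K$, multiplication by $x$ is zero on every $\Ext^n_R(K,M)$, so the long exact sequence splits into short exact sequences $0 \to \Ext^n_R(K,M) \to \Ext^n_R(K,M/xM) \to \Ext^{n+1}_R(K,M) \to 0$. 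These force the least nonvanishing degree for $M/xM$ to be exactly one below that for $M$; as $\depth(M/xM) = t-1$, the induction closes.

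For part (2) I would take a minimal free resolution $\cdots \to F_1 \to F_0 \to M \to 0$, whose differentials have entries in $\mathfrak{m}$. Applying $\Hom_R(-,K)$ annihilates every differential (because $\mathfrak{m}K = 0$), so $\Ext^n_R(M,K) \cong \Hom_R(F_n,K) \cong K^{\beta_n}$ with $\beta_n = \operatorname{rank} F_n$. Thus $\Ext^n_R(M,K) \neq 0$ precisely when $F_n \neq 0$, that is for $n \leq \pdim M$, which yields the claimed supremum (also in the case $\pdim M = \infty$).

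For part (3), with $R$ regular of dimension $d$, I would use that $\mathfrak{m}$ is generated by a regular system of parameters $x_1,\dots,x_d$, so the Koszul complex $L_\bullet = L_\bullet(x_1,\dots,x_d)$ is a minimal free resolution of $K$ with $L_p$ free of rank $\binom{d}{p}$. The structural input is the self-duality of the Koszul complex: the perfect pairings $\bigwedge^p \otimes \bigwedge^{d-p} \to \bigwedge^d \cong R$ give an isomorphism of complexes $\Hom_R(L_\bullet,R) \cong L_\bullet$ up to a shift by $d$. Then $\Ext^n_R(K,M) = H^n(\Hom_R(L_\bullet,M)) = H^n(\Hom_R(L_\bullet,R) \otimes_R M) \cong H_{d-n}(L_\bullet \otimes_R M) = \operatorname{Tor}^R_{d-n}(K,M)$. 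Reading off a minimal free resolution of $M$ shows $\operatorname{Tor}^R_{d-n}(M,K)$ and $\Ext^{d-n}_R(M,K)$ are both copies of $K^{\beta_{d-n}}$; indeed $\Ext^{d-n}_R(M,K) \cong \Hom_K(\operatorname{Tor}^R_{d-n}(M,K),K)$, an isomorphism of finite-dimensional $K$-vector spaces. Combining this with the symmetry $\operatorname{Tor}^R_{d-n}(K,M) \cong \operatorname{Tor}^R_{d-n}(M,K)$ gives $\Ext^n_R(K,M) \cong \Ext^{d-n}_R(M,K)$. The main obstacle I anticipate is the bookkeeping: pinning down the degree shift in the Koszul self-duality and verifying that the last identification is the genuine vector-space isomorphism the statement requires, rather than a mere coincidence of Betti numbers.
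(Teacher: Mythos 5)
Your proposal is correct (under the standing assumption, which you rightly make explicit, that $M$ is finitely generated and nonzero; as literally stated the formulas can fail for general modules), but it takes a genuinely different route from the paper's, for the simple reason that the paper supplies no proof at all: it records (1) and (2) as well known and cites (3) from \cite[Proposition 3.10]{IR}, where the isomorphism is extracted from the Calabi--Yau property of regular local rings. That framing is deliberate --- the paper's Theorem A(3) is the noncommutative analogue, proved via Iyama's higher Auslander--Reiten duality with $\tau_d$ playing the role of the Calabi--Yau shift --- whereas your proof is the elementary, self-contained one: induction on $\depth M$ using the splitting of the long exact sequence for $\Hom_R(K,-)$ along an $M$-regular element in (1) (the base case silently invokes prime avoidance to get $\mathfrak{m}\in\operatorname{Ass} M$, which is fine and standard), minimality of the free resolution plus $\mathfrak{m}K=0$ in (2), and Koszul self-duality $\Hom_R(L_\bullet,R)\cong L_\bullet$ shifted by $d$ in (3), yielding $\Ext_R^n(K,M)\cong\operatorname{Tor}^R_{d-n}(K,M)\cong\operatorname{Tor}^R_{d-n}(M,K)\cong\Ext_R^{d-n}(M,K)$. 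The worry you flag about the last identification resolves exactly as you suggest: both sides are annihilated by $\mathfrak{m}$, hence are finite-dimensional $K$-vector spaces of dimension $\beta_{d-n}(M)$ by minimality, and $\Ext_R^{d-n}(M,K)\cong\Hom_K(\operatorname{Tor}^R_{d-n}(M,K),K)$ is a genuine (even natural) isomorphism, so the conclusion is an honest isomorphism rather than a coincidence of Betti numbers; your degree bookkeeping in the Koszul step is also right, since $\Hom_R(L_\bullet,M)\cong\Hom_R(L_\bullet,R)\otimes_R M$ for finite free $L_p$. What the paper's citation buys is brevity and the conceptual Calabi--Yau framing that motivates the whole article; what your argument buys is self-containedness and an explicit mechanism for the symmetry, at the modest cost of obscuring the structural parallel with the $\tau_d$-duality that the paper exploits in Theorems 3.2 and 3.7.
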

Part (1) and (2) are well known, while part (3) can be found in \cite[Proposition 3.10]{IR}.
Part (3) immediately implies the Auslander-Buchsbaum formula together with the formulas for the projective dimension and the depth from (1) and (2).

Considerable interest has been shown along the years to obtain non-commutative generalisations of the Auslander-Buchsbaum formula. For instance, it was generalised to the non-commutative graded case in \cite{MR1644217}, to the non-commutative local case in \cite{MR968204}, \cite{MR1741569}  and for subclasses of AS-Gorenstein algebras in \cite{MR1848957}.

The aim of this article is to give a non-commutative analogue of Theorem \ref{CYregularrings} for finite-dimensional algebras over a field and give applications, where the role of the depth is replaced with the dominant dimension. 
Let $A$ now be a finite-dimensional algebra over a field $K$. Without loss of generality, we assume that $A$ is connected and not semisimple. 
Recall that the \emph{dominant dimension}, $\domdim M$, of a finitely generated module $M$ over a finite-dimensional algebra $A$ is defined as the smallest $n$ such that $I^n$ is non-projective or infinite when no such $n$ exists, where 
$$0 \rightarrow M \rightarrow I^0 \rightarrow I^1 \rightarrow I^2 \rightarrow \cdots $$ is a minimal injective coresolution of $M$. The dominant dimension of an algebra is then simply defined as the dominant dimension of the regular module. A local commutative noetherian ring $R$ is \emph{regular} precisely when its global dimension, $\gldim R$, is finite, or equivalently, if $\gldim R= \depth R$ holds. For finite-dimensional algebras, this condition is then replaced by $\gldim A = \domdim A<+\infty$, which are exactly the (non-semisimple) \emph{higher Auslander algebras}. Higher Auslander algebras were introduced by Iyama in \cite{Iya} and are the foundations of higher Auslander-Reiten theory. One of the central results is the higher Auslander correspondence that shows that higher Auslander algebras are in a bijective correspondence to cluster tilting modules. Higher Auslander-Reiten theory and in particular cluster tilting modules played a key role in the recent development of the representation theory of finite-dimensional algebras with many interactions to other areas such as algebraic geometry, combinatorics and Lie theory, we refer for example to \cite{Iya3} and \cite{GLS} for surveys. Let $e$ be an idempotent such that $\add eA$ is the subcategory of projective-injective right $A$-modules and let $f$ be an idempotent such that $\add Af$ is the subcategory of projective-injective left $A$-modules. Then we define $\underline{A}:=A/AfA$ and $\overline{A}:=A/AeA$. Note that when $A$ has dominant dimension at least two, then by the Morita-Tachikawa correspondence we can write $A \cong \End_B(M)$ for some algebra $B$ with generator-cogenerator $M$ of $\rmod B$. Then for algebras $A$ of dominant dimension at least two, we have as algebras $\underline{A} \cong \underline{\End}_B(M)$ and $\overline{A} \cong \overline{\End}_B(M)$. Motivated by this, we call $\underline{A}$ the \emph{stable} $A$-module and $\overline{A}$ the \emph{costable} $A$-module.
Our first main result gives a non-commutative analogue of Theorem \ref{CYregularrings}:
\begin{thmintroduction}[see Theorems \ref{domdimprojdimformulas} and \ref{thm3dot7}] \label{theoremA}
Let $A$ be a finite-dimensional algebra and $M$ a right $A$-module.
\begin{enumerate}
\item $\domdim M:= \inf \{ n \geq 0 \mid \Ext_A^n(\underline{A},M) \neq 0 \},$ if $M$ is not projective-injective.
\item Assume that $M$ has finite projective dimension over $A$. Then, $$\pdim M= \sup \{ n \geq 0 \mid \Ext_A^n(M,\overline{A}) \neq 0 \}.$$ 
\item If $A$ is additionally higher Auslander, that is $\gldim A=d=\domdim A$ and $M$ is non-projective-injective, then
$\Ext_A^n(\underline{A},M) \cong D \Ext_A^{d-n}(M,\overline{A})$ for every integer $n$.
\end{enumerate}
\end{thmintroduction}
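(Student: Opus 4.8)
The plan is to reduce part (3) to part (1) together with the finiteness of $\gldim A$, by showing that $\mathbf{R}\Hom_A(\underline{A},A)$ is concentrated in a single cohomological degree and then transporting the computation to an arbitrary $M$ by an evaluation (tensor-hom) isomorphism.

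First I would observe that $\underline{A}$ is perfect, since $\gldim A=d<\infty$; in particular $\Ext_A^i(\underline{A},A)=0$ for $i>d$. Applying part (1) with second argument $M=A$ — which is legitimate because $A$ is not projective-injective (a non-semisimple algebra of finite global dimension is not self-injective) — and using $\domdim A=d$ from the higher Auslander hypothesis, I get $\Ext_A^i(\underline{A},A)=0$ for every $i<d$. Together with the vanishing above $d$ this shows that $\mathbf{R}\Hom_A(\underline{A},A)\simeq \Ext_A^d(\underline{A},A)[-d]$ is a complex of left $A$-modules concentrated in degree $d$.

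Next I would use that for a perfect object the evaluation map is an isomorphism: for every right module $M$ there is a natural isomorphism $\mathbf{R}\Hom_A(\underline{A},M)\simeq M\otimes_A^{\mathbf L}\mathbf{R}\Hom_A(\underline{A},A)$. Substituting the previous step and passing to cohomology yields, for every integer $n$,
\[ \Ext_A^n(\underline{A},M)\cong \operatorname{Tor}^A_{d-n}\!\bigl(M,\Ext_A^d(\underline{A},A)\bigr). \]
The standard duality over the ground field, $\operatorname{Tor}^A_{d-n}(M,DN)\cong D\Ext_A^{d-n}(M,N)$, then reduces the whole statement to a single module identification.

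The hard part will be to prove that $\Ext_A^d(\underline{A},A)\cong D\overline{A}$ as left $A$-modules. Computing this top Ext from the minimal injective coresolution of $A_A$, it equals $\Hom_A(\underline{A},C)$, where $C$ is the $d$-th cosyzygy of the regular module, and as in the proof of part (1) this is the submodule $\{c\in C\mid cAf=0\}$. I expect to match it with $D\overline{A}=D(A/AeA)$ by determining the $d$-th cosyzygy of $A$ explicitly from the higher Auslander structure and invoking that the Nakayama permutation interchanges the idempotents $e$ and $f$ labelling the projective-injective right and left modules; this should force the $Af$-annihilated part of $C$ to be supported exactly on the non-$e$ simples, matching $D(A/AeA)$ on the nose. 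Once this identification (and its naturality in the bimodule structure) is in place, combining the three displays gives $\Ext_A^n(\underline{A},M)\cong D\Ext_A^{d-n}(M,\overline{A})$ for all $n$; the concentration in degree $d$ and the reduction to $\operatorname{Tor}$ are formal, so it is this last identification that carries the real content.
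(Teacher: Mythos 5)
Your formal skeleton for part (3) is correct, and in one respect cleaner than the paper's: granting part (1), the concentration of $\mathbf{R}\Hom_A(\underline{A},A)$ in degree $d$ is legitimate (your justification that $A_A$ is not projective-injective is fine, since a non-semisimple algebra of finite global dimension is not self-injective), the evaluation isomorphism $M\otimes_A^{\mathbf{L}}\mathbf{R}\Hom_A(\underline{A},A)\simeq \mathbf{R}\Hom_A(\underline{A},M)$ holds because $\underline{A}$ is perfect, and Tor--Ext duality then gives $\Ext_A^n(\underline{A},M)\cong \operatorname{Tor}^A_{d-n}(M,\Ext_A^d(\underline{A},A))\cong D\Ext_A^{d-n}(M,\overline{A})$ uniformly in $n$ — whereas the paper's proof of Theorem \ref{thm3dot7} must handle $n=0$ and $n=d$ by separate arguments with the stable and costable Hom versions of Iyama's dualities (Theorem \ref{iyama lemma}). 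Indeed, your route would even establish the isomorphism for arbitrary $M$, not just non-projective-injective $M$.

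The genuine gap is exactly where you say "the hard part" is: the identification $\Ext_A^d(\underline{A},A)\cong D\overline{A}$ as left $A$-modules is not proved, and your sketch of it would not work as stated. Since $\gldim A=d$ gives $\tau_d\cong D\Ext_A^d(-,A)$, your claim is literally equivalent to $\tau_d(\underline{A})\cong\overline{A}$, which is Proposition \ref{taustablehomformula} of the paper — proved there by a full page of nontrivial work: passing to $B$ with its $(d-1)$-cluster tilting module, lifting a minimal projective resolution of $\underline{\Hom}_B(M,X)$ back to an exact sequence over $B$ via exactness of $-\otimes_A M$, and invoking Iyama's duality together with $\tau_{d-1}^{-1}\tau_{d-1}(X')\cong X'$. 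Your proposed shortcut — that the $Af$-annihilated part of the top cosyzygy is "supported exactly on the non-$e$ simples, matching $D(A/AeA)$ on the nose" — does not suffice: composition-factor support does not determine a module up to isomorphism ($D(A/AeA)$ is in general far from semisimple), and there is no Nakayama permutation interchanging $e$ and $f$ in the strict sense since $A$ is not self-injective; what is available is only the weaker structure of Lemma \ref{notationstablemodules} ($eAe\cong fAf$ and the double centralizer property). Note also that the statement has three parts and you address only (3): part (1) is quoted but not proved (the paper derives it from Auslander--Platzeck--Todorov together with Lemma \ref{casezeroAPT}), and part (2) — which in the paper has its own Auslander--Reiten-formula argument in Theorem \ref{domdimprojdimformulas} — is not touched at all. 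So the proposal is a correct and elegant reduction, but all the real content of (3) is concentrated in the unproven identification, which in the paper carries the weight of Proposition \ref{taustablehomformula}.
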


We give several applications of this theorem in this article.
The first application is the following non-commutative analogue of the Auslander-Buchsbaum formula for higher Auslander algebras:
\begin{corintroduction}[\ref{cor3dot8}]
Let $A$ be a higher Auslander algebra and $M$ a module that is not projective-injective. Then $\pdim M + \domdim M=\gldim A.$
\end{corintroduction}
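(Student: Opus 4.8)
The plan is to deduce the corollary directly from the three parts of Theorem~\ref{theoremA}, all of which are available under the present hypotheses. Since $A$ is higher Auslander, $\gldim A = d < \infty$, so every module, and in particular $M$, has finite projective dimension; this makes part (2) applicable. Parts (1) and (3) only require that $M$ be non-projective-injective, which is assumed, together with $A$ being higher Auslander for part (3). Thus no extra hypotheses need to be verified.

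I would begin by recording the two invariants as an infimum and a supremum over index sets. Writing
$$S_1 = \{ n \geq 0 \mid \Ext_A^n(\underline{A}, M) \neq 0 \}, \qquad S_2 = \{ n \geq 0 \mid \Ext_A^n(M, \overline{A}) \neq 0 \},$$
part (1) gives $\domdim M = \inf S_1$ and part (2) gives $\pdim M = \sup S_2$; both invariants are finite (equivalently both sets are nonempty and bounded above by $d$) precisely because $\gldim A = d$ is finite and the formulas of parts (1) and (2) apply. The key step is then to transport $S_1$ onto $S_2$ through the duality isomorphism of part (3): as $D$ is the $K$-linear duality, the isomorphism $\Ext_A^n(\underline{A}, M) \cong D\,\Ext_A^{d-n}(M, \overline{A})$ shows that $\Ext_A^n(\underline{A}, M) \neq 0$ if and only if $\Ext_A^{d-n}(M, \overline{A}) \neq 0$. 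Reading this off for every integer $n$ yields $n \in S_1 \iff d - n \in S_2$, that is, $S_1 = d - S_2$.

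Finally I would compute $\domdim M = \inf S_1 = \inf(d - S_2) = d - \sup S_2 = d - \pdim M$, which rearranges to $\pdim M + \domdim M = d = \gldim A$. The proof is thus essentially formal once Theorem~\ref{theoremA} is in hand; the only point needing a little care—and what I regard as the main (if modest) obstacle—is justifying that the reflection $n \mapsto d - n$ genuinely converts the infimum into a supremum. This rests on the vanishing of $\Ext$ in negative degrees, which confines both index sets to $\{0, 1, \dots, d\}$ and makes the bijection $S_1 = d - S_2$ order-reversing, so that extremal elements on the two sides correspond correctly.
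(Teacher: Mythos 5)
Your proof is correct and takes essentially the same route as the paper's: both deduce the formula purely formally from parts (1)--(3) of Theorem~\ref{theoremA}, the paper by setting $r=\domdim M$ and reading off via the duality that $\Ext_A^{l+d-r}(M,\overline{A})\cong D\Ext_A^{r-l}(\underline{A},M)=0$ for $l=1,\dots,r$ before invoking part (2), you by packaging the same computation as the order-reversing reflection $S_1=d-S_2$ of index sets inside $\{0,\dots,d\}$. The difference is purely organizational, and your attention to why the reflection turns the infimum into the supremum corresponds exactly to the vanishing pattern the paper verifies explicitly.
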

This formula was also obtained in \cite{CIM} using different methods and relying on results of Miyachi \cite{Mi}.
We use the previous corollary to give a quick new proof in Theorem \ref{shentheorem} of a result by Shen \cite{S}, which characterises when higher Auslander algebras are of finite representation-type.

Our next application deals with two articles, written by Ringel \cite{R} and Zito \cite{Z}.
Namely in \cite{Z}, Zito showed that an Auslander algebra is tilted if and only if $\pdim \tau \Omega^1(D(A)) \leq 1$, which raised the question for an explicit classification of tilted Auslander algebras.
In \cite{R}, Ringel studied QF-1 algebras of global dimension at most 2 and showed that they have positive dominant dimension as a main result, which raises the question of classifying QF-1 algebras of global dimension at most 2. Here QF-1 (short for Quasi-Frobenius-1) algebras are a generalisation by Thrall \cite{T} of the classical quasi-Frobenius algebras
and are by definition those algebras $A$ such that all faithful left and right $A$-modules are balanced. 
We give an explicit classification of tilted Auslander algebras and also of QF-1 algebras of global dimension at most 2 with the surprising result that both classes coincide:
\begin{thmintroduction}[see Theorem \ref{theoremB}] \label{thmB}
Let $K$ be an algebraically closed field.
The following are equivalent:
\begin{enumerate}
\item $A$ is a tilted Auslander algebra.
\item $A$ is a QF-1 algebra of global dimension at most 2.
\item $A$ is the Auslander algebra of a path algebra of Dynkin type $A_1, A_2$ or $A_3$ with non-linear orientation.
\item $A$ is the Auslander algebra of a bound quiver algebra $B$ such that every indecomposable $B$-module is projective or injective.
\end{enumerate}
\end{thmintroduction}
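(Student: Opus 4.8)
The plan is to prove the four statements equivalent by separating the two concrete descriptions (3) and (4) from the two intrinsic ones (1) and (2), and to use the Auslander--Buchsbaum formula $\pdim M+\domdim M=\gldim A$ of Corollary~\ref{cor3dot8} as the bridge that converts the homological content hidden in (1) and (2) into structural information about the algebra $B$ from which $A$ arises.

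First I would dispose of the concrete equivalence (3)$\Leftrightarrow$(4). This amounts to classifying the connected bound quiver algebras $B$ all of whose indecomposable modules are projective or injective. A short analysis shows that every simple module must then be projective or injective, which already rules out a simple sitting strictly between the top and socle of an indecomposable; carrying this out forces $B$ to be a path algebra of type $A_1$, $A_2$, or $A_3$ with non-linear orientation, the linearly oriented $A_3$ being excluded precisely because it produces the middle simple $S_2$, which is neither projective nor injective. For each of the three surviving algebras the full list of indecomposables consists only of projectives and injectives, and the associated Auslander algebra $A=\End_B(M)$, with $M$ the sum of all indecomposables, is small enough to check by inspection that it is tilted and QF-1. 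This settles (3)$\Leftrightarrow$(4) together with the implications from these into (1) and (2).

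For the substantial direction I would translate (1) and (2) into a single homological condition on $A$. Since tilted algebras have global dimension at most $2$, condition (1) forces $A$ to be an Auslander algebra with $\gldim A=2$, and Zito's criterion \cite{Z} rephrases "tilted" as $\pdim \tau\Omega^1(D(A))\le 1$. For condition (2), Ringel's theorem \cite{R} supplies $\domdim A\ge 1$, which I would first upgrade to $\domdim A\ge 2$ so that $A$ is again an Auslander algebra. The crucial point is then to make the QF-1 property homological: an Auslander algebra is QF-3, so its faithful modules are exactly those containing the basic projective-injective module as a summand, and by the double centralizer theorem such a module $M$ is balanced precisely when an appropriate dominant-dimension condition holds for $M$ over $\End_A(M)$. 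Feeding this through Corollary~\ref{cor3dot8} turns both hypotheses into a condition on the projective and injective dimensions of the indecomposable non-projective-injective $A$-modules, namely that each such $N$ satisfies $\pdim N\le 1$ or $\idim N\le 1$; matching this against Zito's inequality shows that (1) and (2) impose the same restriction on $A$.

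Finally I would run the classification and read off (3). The homological constraint obtained above restricts $B$ so severely that only the three Dynkin cases survive, and for those $A$ is one of finitely many explicit algebras already shown to be tilted and QF-1 in the first step, closing the cycle. The main obstacle I anticipate is precisely the QF-1 side of this last step: one must control the balancedness of \emph{every} faithful module, not merely the regular one, and show that for any $B$ outside the short list there is a faithful but non-balanced module, equivalently an indecomposable $A$-module $N$ with $\pdim N=\idim N=2$. The clean dichotomy of Corollary~\ref{cor3dot8}, which forbids such an $N$ exactly in the three listed cases while guaranteeing its existence otherwise, is what makes the classification go through.
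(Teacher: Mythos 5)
Your architecture coincides with the paper's: the equivalence of (1) and (2) via Ringel's positive dominant dimension, the upgrade to $\domdim A\geq 2$ by Morita's criterion (Theorem \ref{moritatheorem}), and the translation through Corollary \ref{cor3dot8} into the quasi-tilted condition ``$\pdim N\leq 1$ or $\idim N\leq 1$ for every indecomposable non-projective-injective $N$'', matched against Zito's Theorem \ref{zitotheorem}, is exactly the paper's route; likewise your treatment of (3)$\Leftrightarrow$(4) (every simple projective or injective, hence $B$ hereditary with only sources and sinks, plus the count excluding more than three vertices, plus explicit verification for the three survivors) is the paper's argument. The genuine gap sits at the one decisive implication, (2)$\Rightarrow$(4). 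You correctly reduce it to: if $B$ admits an indecomposable module $T$ that is neither projective nor injective, then $A=\End_B(M)$ must have an indecomposable $N$ with $\pdim N=\idim N=2$, equivalently (by Corollary \ref{cor3dot8}) with $\domdim N=\codomdim N=0$. But your claimed resolution --- that Corollary \ref{cor3dot8} ``guarantees its existence otherwise'' --- is circular: the corollary is only the module-by-module dictionary $\pdim N+\domdim N=2=\idim N+\codomdim N$. It converts the problem into the language of dominant dimensions but produces no module whatsoever; nothing in it says that an extra summand $T$ of the cluster tilting module forces a bad $N$ to exist.

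What is actually needed, and what the paper supplies, is an explicit construction of $N$ from $T$: take $N=S_T=\top\Hom_B(M,T)$, the simple $A$-module at the vertex corresponding to $T$. Its codominant dimension is zero because its projective cover $\Hom_B(M,T)$ is injective if and only if $T$ is injective, which it is not; and its dominant dimension is zero because the projective-injective right $A$-modules are exactly the $\Hom_B(M,I)$ with $I$ injective over $B$, whose socles have the form $D(\top(\Hom_B(\nu_B^{-1}(I),M)))$ and hence only involve simples at \emph{projective} summands of $M$ --- so $S_T$, with $T$ non-projective, cannot embed into any of them. This single indecomposable violates Morita's criterion and kills the QF-1 property. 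Without this construction (or some substitute producing a faithful non-balanced module from $T$) your cycle does not close; the rest of your sketch is in order.
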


The previous theorem classifies QF-1 higher Auslander algebras for global dimension 2.
We pose the general classification as a problem:
\begin{problem}
Classify the higher Auslander algebras that are QF-1.
\end{problem}
In general, checking whether a given algebra $A$ is QF-1 is very complicated as it is a condition about all $A$-modules. 
Our third main result characterises higher Auslander QF-1 algebras in terms of two local conditions. The strategy behind the proof is based on torsion theory.
\begin{thmintroduction}[see Theorem \ref{TheoremC}] \label{thmC}
    Let $A$ be a higher Auslander algebra of global dimension $g$. $A$ is QF-1 if and only the following two conditions are satisfied:
    \begin{enumerate}
        \item The injective dimension of $\underline{A}$ as a right $A$-module is at most $g-1$;
        \item If $Ae$ is not injective for a primitive idempotent $e$ and $fA$ is not injective for a primitive idempotent, then $fAe=0$.
    \end{enumerate}
\end{thmintroduction}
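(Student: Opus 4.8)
The plan is to convert the global requirement that \emph{every} faithful module be balanced into the two local conditions, in three stages: first describe the shape of faithful modules, then reduce balancedness to a biendomorphism obstruction phrased through torsion pairs, and finally match that obstruction with conditions (1) and (2). Since $A$ is higher Auslander we have $\domdim A=\gldim A=g\ge 2$, so $A$ is QF-3; writing $eA$ for a basic projective--injective right module, $eA$ is faithful and balanced with $\End_{eAe}(eA)\cong A$. A standard fact for QF-3 algebras shows that a right module is faithful if and only if it contains every indecomposable projective--injective as a direct summand, so that, up to multiplicities, every faithful right module has the form $eA\oplus N$. Hence $A$ is right QF-1 exactly when $eA\oplus N$ is balanced for all $N$, and dually for left modules.

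Next I would establish the key reduction. Because the idempotent projections and inclusions of $eA\oplus N$ are $A$-linear, every biendomorphism of $eA\oplus N$ is diagonal, and—using that $eA$ is balanced—its $eA$-component is right multiplication by a unique $a\in A$. Matching the two components then shows that $eA\oplus N$ is balanced if and only if there is \emph{no} nonzero biendomorphism $\delta$ of $N$ that annihilates the trace $\operatorname{tr}_{eA}(N)=NeA$ and whose image lies in the reject $\operatorname{rej}_{eA}(N)$. Torsion theory enters here: relative to the hereditary torsion pair $(\{X:\Hom_A(X,eA)=0\},\cogen eA)$ and the trace filtration by $\gen eA$, such a $\delta$ factors as $N\twoheadrightarrow N/NeA\to \operatorname{rej}_{eA}(N)\hookrightarrow N$, i.e. through the torsion-free quotient $N/NeA$ (an $\overline{A}$-module) into a torsion submodule, compatibly with $\End_A(N)$.

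For necessity I would argue contrapositively. If (2) fails, I would build an explicit $N$ from non-injective projectives $e'A,f'A$ with $0\neq f'Ae'\cong\Hom_A(e'A,f'A)$ carrying such a $\delta$, producing a non-balanced faithful module. If (1) fails, i.e. $\idim\underline{A}_A=g$, I would invoke the Auslander--Buchsbaum duality of Theorem~A(3), $\Ext_A^n(\underline{A},X)\cong D\Ext_A^{g-n}(X,\overline{A})$, to detect a module $X$ with $\Ext_A^{g}(\underline{A},X)\neq 0$ along a minimal injective coresolution of $\underline{A}$, and translate this non-vanishing into a biendomorphism obstruction on a suitable $N$, again contradicting QF-1. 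Conversely, assuming (1) and (2), I would show no obstruction survives: condition (2) annihilates the interactions between the torsion-free quotient and the reject coming from non-injective projectives, while condition (1) forces the relevant top Ext group—equivalently the compatible map $N/NeA\to\operatorname{rej}_{eA}(N)$—to vanish after dévissage along injective coresolutions.

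The main obstacle is precisely this sufficiency direction: the hypotheses are two finite, local statements, whereas balancedness must be checked against \emph{all} modules $N$ simultaneously. Turning the single homological inequality $\idim\underline{A}_A\le g-1$ into the uniform vanishing of every obstruction $\delta$ is the crux, and I expect Theorem~A(3) to be indispensable there, converting the injective-dimension bound on $\underline{A}$ into exactly the Ext-vanishing that kills each biendomorphism obstruction.
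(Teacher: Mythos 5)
Your setup is sound as far as it goes: over the QF-3 algebra $A$ every faithful right module is indeed of the form $eA\oplus N$ with $eA$ the minimal faithful projective--injective, and your reduction of balancedness of $eA\oplus N$ to the non-existence of an obstruction $\delta\colon N/NeA\to\operatorname{rej}_{eA}(N)$ is the correct first-principles analysis. But the plan breaks down exactly at the point you yourself flag as the crux, and for a structural reason: your $\delta$ is only additive (it commutes with $\End_A(N)$, not with the $A$-action), so $A$-module vanishing statements such as $\Ext_A^1(X,Y)=0$ or $\Hom_A(N/NeA,\operatorname{rej}_{eA}(N))=0$ cannot directly kill it. Bridging precisely this semilinear-versus-linear gap is the content of Morita's criterion (Theorem \ref{moritatheorem}), which the paper quotes as a black box to replace ``all faithful modules are balanced'' by the per-indecomposable condition $\domdim M+\codomdim M\geq 1$; after that substitution everything in the paper's proof of Theorem \ref{TheoremC} is genuine $A$-module theory. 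Concretely, sufficiency there runs through the torsion pair $(\gen eA,\ \cogen\tau_g(DA))$ induced by the tilting module $eA\oplus\Omega^{-1}(A)$ (Proposition \ref{prop3dot11}), with condition (1) entering via Lemma \ref{formulahighertauinverse} and the Auslander--Buchsbaum formula (Corollary \ref{cor3dot8}) to show every module in $\cogen\tau_g(DA)$ has positive dominant dimension, and condition (2) entering through a composition-factor argument giving $\Ext_A^1(X,Y)=0$ for $X\in\cogen\tau_g(DA)$ and indecomposable $Y$ with $\domdim Y=0$, whence the canonical sequence $0\to\Tr_{eA}W\to W\to\widetilde{W}\to 0$ of an indecomposable $W$ with $\codomdim W=0$ splits off any dominant-dimension-zero summand and forces $\domdim W\geq 1$. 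Your proposal contains no counterpart of these steps; note also that the hereditary torsion pair you invoke, with torsion-free class $\cogen eA$, is not the one that carries the argument.

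Both necessity directions are likewise only announced, and one is aimed at the wrong Ext group. For (2), the paper's Theorem \ref{thm3dot19} argues via Morita's criterion applied to an indecomposable quotient $Ay/L$ with $L$ a maximal submodule avoiding a fixed $0\neq w\in xAy$, deriving that $D(xA)$ would be projective; your ``I would build an explicit $N$ carrying such a $\delta$'' is exactly the hard construction (this was Ringel's original difficulty) and is not supplied. For (1), the paper gets necessity cheaply: $\pdim\tau_g^{-1}(A)=g$ holds always, QF-1 implies $g$-quasi-tilted by Corollaries \ref{cor3dot8} and \ref{cor3dot10}, hence $\idim\tau_g^{-1}(A)\leq g-1$, and $\add\tau_g^{-1}(A)=\add\underline{A}$ by Lemma \ref{formulahighertauinverse}. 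Your contrapositive sketch has the variance backwards: $\idim\underline{A}=g$ is detected by $\Ext_A^g(S,\underline{A})\neq 0$ for some simple $S$, whereas the non-vanishing you propose to detect, $\Ext_A^g(\underline{A},X)\neq 0$ for some $X$, holds for every higher Auslander algebra (since $\pdim\underline{A}=g$) and so detects nothing; Theorem \ref{theoremA}(3) relates $\Ext_A^n(\underline{A},-)$ to $\Ext_A^{g-n}(-,\overline{A})$, i.e.\ to $\overline{A}$ in the second argument, not $\underline{A}$, so it cannot be invoked the way you intend. In sum: a reasonable opening reduction, but the three implications that constitute the theorem are all missing.
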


In particular, this generalises one direction of Theorem B in the sense that every higher Auslander QF-1 algebra is the endomorphism algebra of an $n$-cluster tilting modules formed by only projective or injective modules (see Corollary \ref{cordot13}). Theorem C furthermore gives the first finite test to check whether a given higher Auslander algebra is QF-1. In the Appendix \ref{appendix}, Theorem C is implemented as a program for the GAP package \cite{QPA} to test the QF-1 property for higher Auslander algebras, see the demonstration in the highly non-trivial Example \ref{nontrivialexampleQF1highaus}.
Moreover, in Section \ref{Examples}, we present examples of higher Auslander algebras of global dimension $g$ which are QF-1 for every $g\geq 2$.

\section{Preliminaries}
In this section, we summarise the concepts and terminology needed.
Throughout this paper, all algebras are assumed to be finite-dimensional $K$-algebras over a field $K$ and all modules are assumed to be finitely generated right modules unless stated otherwise.

For introductions to representation theory and homological algebra of finite-dimensional algebras, we refer for example to \cite{ARS} and \cite{ASS}.
We always assume that our algebras are connected without loss of generality, which simplifies some of our definitions and statements.
Let $A$ be a finite-dimensional algebra over a field $K$. By $\rmod A$ we denote the category of all finitely generated right $A$-modules. Given a module $X$, we write $\add X$ to denote the additive closure of $X$ and $\proj A$ to denote $\add A$. Given a map $f\colon M\rightarrow N$ between two modules, we denote its image by $\im f$. By $\Tr_M N$ we denote the trace of $M$ in $N$. By $D$ we mean the standard duality $D\colon \rmod A\leftrightarrow \rmod A^{op}$, where $A^{op}$ denotes the opposite algebra of $A$. Given a module $X$, we write $I(X)$ to denote the injective hull of $X$. We write $\soc X$ (resp. $\top X$) to denote the socle (resp. the top) of $X$ and we denote by $\transpose X$ the transpose of $X$. 
By $\nu=D\Hom_A(-, A)$, we denote the Nakayama functor and by $\nu^{-1}$ its right adjoint, the inverse Nakayama functor. We sometimes write $\nu_A$ and $\nu^{-1}_A$, respectively, in order to emphasize the underlying algebra.

\subsection{Higher Auslander algebras and cluster tilting modules}
For $n \geq 1$ let $\tau_n(X):= \tau \Omega^{n-1}(X)$ be the \emph{higher Auslander-Reiten translate} of a module $X$ and $\tau_n^{-1}=\tau^{-1} \Omega^{-(n-1)}(X)$ the inverse higher Auslander-Reiten translate, where $\Omega^n (X)$ is the $n$-syzygy of $X$ and $\Omega^{-n}(X)$ is the $n$-cosyzygy of $X$. Sometimes, we write $\tau_n^A(X)$ to emphasize the underlying algebra.  For every indecomposable non-projective $A$-module $X$ in $^{\perp_{n-1}}A$ we have $\tau_n^{-1}(\tau_n X)\cong X$ (see \citep[1.4.1]{Iya}).
Here, the perpendicular category of $M$, ${}^{\perp_n} M$, is defined as the subcategory $${}^{\perp_n} M=\{X\in \rmod A\colon \Ext_A^i(X, M)=0, i=1, \ldots, n\}.$$ Dually, one can consider the perpendicular category $M^{\perp_n}$ for every $n\geq 1$.
We will use the usual notation to denote the stable Hom functor by $\underline{\Hom}$ and the costable Hom functor by $\overline{\Hom}$.

We note the following from \cite[Theorem 1.5]{Iya}.
\begin{theorem} \label{iyama lemma}
Let $X \in {}^{\perp n} A$ and $Y \in D(A)^{\perp n}$. For any $1 \leq i \leq n$, there exist functorial isomorphisms for any $A$-module $Z$:
\begin{enumerate}
\item $\Ext_A^{n+1-i}(X,Z) \cong D \Ext_A^i(Z, \tau_{n+1}(X))$ and $\underline{\Hom}_A(X,Z) \cong D \Ext_A^{n+1}(Z, \tau_{n+1}(X))$.
\item $\Ext_A^{n+1-i}(Z,Y) \cong D \Ext_A^i(\tau_{n+1}^{-1}(Y),Z)$ and $\overline{\Hom}_A(Z,Y) \cong D \Ext_A^{n+1} ( \tau_{n+1}^{-1}(Y),Z)$.
\end{enumerate}
\end{theorem}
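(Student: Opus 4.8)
The statement is a form of higher Auslander--Reiten duality, and my plan is to realise both sides of the isomorphisms in (1) as the cohomology of a single complex obtained by applying the Nakayama functor $\nu=D\Hom_A(-,A)$ to a projective resolution of $X$. First I would fix a \emph{minimal} projective resolution $\cdots\to P_1\to P_0\to X\to 0$ and recall that, since this resolution is minimal, $P_{n+1}\to P_n\to\Omega^n(X)\to 0$ is a minimal projective presentation of $\Omega^n(X)$; hence by definition $\tau_{n+1}(X)=\tau\Omega^n(X)=\ker(\nu P_{n+1}\to\nu P_n)$, with the $\nu P_j$ injective. Applying $\nu$ to the whole resolution yields a complex $\cdots\to\nu P_1\to\nu P_0\to 0$ of injectives whose homology in degree $j$ is $D\Ext_A^j(X,A)$, because $D$ is exact.

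The hypothesis $X\in{}^{\perp n}A$ enters precisely here: it forces $\Ext_A^j(X,A)=0$ for $1\le j\le n$, so the complex is exact in the middle range and, together with $H_0=\nu X$ and the kernel description of $\tau_{n+1}(X)$, produces an exact sequence $0\to\tau_{n+1}(X)\to\nu P_{n+1}\to\nu P_n\to\cdots\to\nu P_0\to\nu X\to 0$. This exhibits the injectives $\nu P_{n+1},\ldots,\nu P_0$ as the beginning of an injective coresolution of $\tau_{n+1}(X)$, terminating at the cokernel $\nu X$.

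Next I would apply $\Hom_A(Z,-)$ and use the natural isomorphism $\Hom_A(Z,\nu P)\cong D\Hom_A(P,Z)$, valid for projective $P$ and arbitrary $Z$. This identifies the truncated coresolution, after applying $\Hom_A(Z,-)$, with the $K$-dual of the cochain complex $\Hom_A(P_\bullet,Z)$ computing $\Ext_A^\bullet(X,Z)$, but with reversed grading, so that cohomological degree $i$ on the coresolution side matches degree $n+1-i$ on the $\Ext_A^\bullet(X,Z)$ side. Reading off cohomology in the interior range $1\le i\le n$ then gives $\Ext_A^i(Z,\tau_{n+1}(X))\cong D\Ext_A^{n+1-i}(X,Z)$, which is the first isomorphism of (1) after dualising; functoriality in $Z$ is inherited from that of all the maps involved.

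The remaining work, and the main subtlety, is the boundary degree: the truncated complex $\nu P_{n+1}\to\cdots\to\nu P_0$ is not yet a genuine injective coresolution, since its final cokernel $\nu X$ is in general non-injective, so at the top degree one must account for the contribution of $\nu X$ and of the projective cover map $P_0\to X$. Tracking this carefully is where the projective quotient appears and converts $\Hom_A(X,Z)$ into the stable $\underline{\Hom}_A(X,Z)$, yielding $\underline{\Hom}_A(X,Z)\cong D\Ext_A^{n+1}(Z,\tau_{n+1}(X))$; this boundary identification is essentially the mechanism of classical Auslander--Reiten duality, which I would invoke as a known tool. Finally, part (2) follows by the dual argument --- running the same computation with a minimal injective coresolution of $Y$, the inverse Nakayama functor $\nu^{-1}$ and $\tau_{n+1}^{-1}$, using $Y\in D(A)^{\perp n}$ --- or equivalently by applying (1) over the opposite algebra $A^{op}$ and transporting the isomorphisms through the duality $D$.
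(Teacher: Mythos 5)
Your proposal is correct: the paper states this result without proof, citing \cite[Theorem 1.5]{Iya}, and your argument --- applying $\nu$ to a minimal projective resolution of $X$, using $X\in{}^{\perp n}A$ to obtain the exact sequence $0\to\tau_{n+1}(X)\to\nu P_{n+1}\to\cdots\to\nu P_0\to\nu X\to 0$, identifying $\Hom_A(Z,\nu P)\cong D\Hom_A(P,Z)$ for projective $P$, and handling part (2) by passing to $A^{op}$ --- is essentially Iyama's original proof. Your one sketchy step, the boundary degree, does close as you predict: since the constructed sequence is exact, $\Omega^{-n}(\tau_{n+1}(X))=\ker(\nu P_1\to\nu P_0)\cong\tau X$, so dimension shifting along the injective terms gives $\Ext_A^{n+1}(Z,\tau_{n+1}(X))\cong\Ext_A^1(Z,\tau X)\cong D\underline{\Hom}_A(X,Z)$ by the classical Auslander--Reiten formula.
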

Given a module $M$, we denote by $\pdim M$ (resp. $\idim M$) the projective (resp. injective) dimension of $M$. By $\domdim M$ or $\domdim_A M$ (resp. $\codomdim M$) we denote the dominant dimension (resp. codominant dimension) of $M$. For a background on these homological invariants, we refer to \cite{Tac2, Yam}. By $\gldim A$ we denote the global dimension of $A$. 

 Recall that an  $A$-module $M$ is called \emph{$n$-cluster tilting}, or just cluster tilting if the specific $n$ does not play a role, for some $n\in \mathbb{N}$ if $\add M= {}^{\perp_{n-1}}M=M^{\perp_{n-1}}$. These modules are also known as maximal $(n-1)$-orthogonal modules in \cite{Iya}, see \cite{Iya2}. In particular, an $n$-cluster tilting module is a generator-cogenerator and $\tau_n M\in \add M$ and $\tau_n^{-1}M\in \add M$ (see \citep[2.3]{Iya}). A generator is a module that contains every projective indecomposable as direct summand, while a cogenerator is defined dually. An algebra has finite representation-type if there are only finitely many isomorphism classes of finite-dimensional indecomposable modules. For those algebras, one can associate an algebra, called Auslander algebra, using the Auslander correspondence. The Auslander algebra of a representation finite-type algebra is the endomorphism algebra of the direct sum of all non-isomorphic indecomposable modules. A subcategory $\mathcal{C}$ of $\rmod A$ is of \emph{finite-type} if there are only finitely many non-isomorphic indecomposable modules in $\mathcal{C}$.

An algebra $A$ is a \emph{higher Auslander algebra} if $\gldim A= \domdim A \geq 2$ or $A$ is semisimple. An Auslander algebra is simply a higher Auslander algebra of global dimension 2.
A recent highlight in representation theory is the result of Iyama that higher Auslander algebras are in bijective correspondence to cluster tilting modules, we refer for example to \cite{Iya2} for details. The correspondence is given by associating to a cluster tilting module $M$, the endomorphism algebra $\End_A(M)$ and to a higher Auslander algebra $B$ the endomorphism ring of $P$, when $P$ is the direct sum of all indecomposable projective-injective modules. This correspondence is called the higher Auslander correspondence as it generalises the classical result of Auslander, which states that Auslander algebras are in bijective correspondence with representation-finite algebras. An algebra $A$ is called \emph{$n$-representation-finite} if it has an $n$-cluster tilting module and $\gldim A\leq n$.

\subsection{QF-1 algebras}
An algebra $A$ is called \emph{QF-1} if every faithful $A$-module is balanced. An $A$-module $M$ is called \emph{balanced} if the canonical homomorphism of algebras $A\rightarrow \End_A(M)$ is surjective. For more information on QF-1 algebras and classification of QF-1 Nakayama algebras, we refer to \cite{Tac2} and \cite{RT}.
Morita in \cite{zbMATH03171195} proved the following important characterisation of QF-1 algebras among algebras with dominant dimension at least one:
\begin{theorem} \label{moritatheorem}
Let $A$ be an algebra with dominant dimension at least one.
Then $A$ is QF-1 if and only if $A$ has dominant dimension at least two and every indecomposable $A$-module has dominant dimension at least one or codominant dimension at least one, that is:
$$\domdim M + \codomdim M \geq 1$$
for all indecomposable modules $M$. \label{thmonedotten}

\end{theorem}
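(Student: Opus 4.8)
The plan is to reduce the QF-1 property to a double centralizer computation and then to control that computation by the projective-injective modules. Recall that a module $M$ is balanced iff the canonical map $A \to \End_B(M)$ with $B=\End_A(M)$ is surjective, and that for faithful $M$ it is automatically injective; thus QF-1 asks that this map be bijective for every faithful right and left module. Since $\domdim A \geq 1$, the minimal faithful module $U=I^0$ (the injective envelope of $A_A$) is projective-injective and $\add U = \add eA$. The first observation I would record is that every faithful module $N$ satisfies $\add U \subseteq \add N$: faithfulness produces an embedding $A \hookrightarrow N^{k}$, hence $U \hookrightarrow N^{k}$, and since $U$ is injective this embedding splits, so each indecomposable summand of $U$ is a summand of $N$. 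I would also invoke the standard equivalence $\domdim A \geq 2 \iff U$ is balanced, i.e. $A \cong \End_{\End_A(U)}(U)$.

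The heart of the argument is a ``pinning'' computation. Let $N$ be faithful; it contains the balanced module $U$. Given $\psi \in \End_B(N)$, commuting $\psi$ with the projections in $B$ forces it to preserve every indecomposable summand. Restricting to the summand $U$, whose biendomorphism ring is $A$, gives $\psi|_U = \rho_a$, right multiplication by a unique $a\in A$. For any other indecomposable summand $Y$ and any $g\in \Hom_A(Y,U)$, the relation $\psi g = g\psi$ forces $g(\psi(y)-ya)=0$ for all $y$; if $Y$ is cogenerated by $U$, which is exactly $\domdim Y \geq 1$ since then the injective envelope of $Y$ lies in $\add U$, these maps separate the points of $Y$ and pin $\psi|_Y = \rho_a|_Y$. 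Dually, commuting with maps $U\to Y$ pins $\psi|_Y$ whenever $Y$ is generated by $U$, equivalently $\codomdim Y \geq 1$ (the projective cover of $Y$ then lies in $\add U$). Projective-injective summands, having infinite dominant dimension, are always cogenerated by $U$, so the only summands that need the hypothesis are the non-projective-injective ones.

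Assembling the two directions is then formal. For sufficiency, assume $\domdim A \geq 2$ and $\domdim M + \codomdim M \geq 1$ for every indecomposable $M$. A faithful $N$ contains $U$, so any $\psi\in\End_B(N)$ equals $\rho_a$ on $U$; since each remaining summand satisfies one of the two pinning conditions, $\psi=\rho_a$ on all of $N$, so $N$ is balanced and $A$ is QF-1. For necessity, suppose $A$ is QF-1. Then the faithful module $U$ is balanced, giving $\domdim A \geq 2$; and if some indecomposable $X$ had $\domdim X = \codomdim X = 0$, then $X$ is neither cogenerated nor generated by $U$, so $\operatorname{rej}_U X \neq 0$ while $\Tr_U X$ is proper, and one constructs a nonzero $\psi \in \End_B(U\oplus X)$ with $\psi|_U=0$ and $\psi|_X$ factoring $X \twoheadrightarrow X/\Tr_U X \to \operatorname{rej}_U X \hookrightarrow X$. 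Such a $\psi$ is not of the form $\rho_a$, so $U\oplus X$ is faithful but not balanced, contradicting QF-1. Finally, as $\domdim A = \domdim A^{\op}$ and the duality $D$ interchanges $\domdim$ and $\codomdim$, the hypothesis is left-right symmetric and the same analysis covers faithful left modules.

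The main obstacle I anticipate is the necessity step, namely the explicit construction of the non-balancing endomorphism $\psi$ for a ``bad'' indecomposable $X$: one must exhibit a nonzero element of the biendomorphism ring $\End_{\End_A(X)}(X)$ that kills $\Tr_U X$, lands in $\operatorname{rej}_U X$, and is compatible with all of $B=\End_A(U\oplus X)$, which requires understanding precisely how $\End_A(X)$ acts and why the simultaneous failure of generation and cogeneration leaves genuine freedom rather than being absorbed into the $A$-action. A secondary and more routine obstacle is verifying that the pinning argument survives in the presence of several summands together with their mutual homomorphisms, and confirming the equivalence $\domdim A\geq 2 \iff U$ balanced in exactly the form used here.
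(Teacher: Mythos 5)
You should first note that the paper does not prove this statement at all: it is Morita's classical theorem, quoted with a reference to Morita's original article and to \cite[Section 3.5]{Yam} for a modern proof, so your proposal has to be judged against the classical argument rather than anything internal to the paper. Your sufficiency direction is essentially that classical argument and is correct: faithfulness puts every indecomposable projective--injective into $\add N$ (one small repair: $U$ itself need not embed in $N^k$; rather each indecomposable projective summand of $U$ embeds into $A\hookrightarrow N^k$ and splits off by injectivity), the identifications $\domdim Y\geq 1 \Leftrightarrow Y\in\cogen U$ and $\codomdim Y\geq 1\Leftrightarrow Y\in\gen U$ are right, the pinning of $\psi$ against the balanced summand $U$ via maps $Y\to U$ and $U\to Y$ is the standard double-centralizer computation, and quoting M\"uller's theorem ($\domdim A\geq 2$ iff the minimal faithful module is balanced, given $\domdim A\geq 1$) is legitimate at the level this theorem is usually proved. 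The left--right reduction via $D$ and $\domdim A=\domdim A^{\op}$ is also fine.

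The genuine gap is in the necessity direction, and you have in effect flagged it yourself. After normalizing $\psi|_U=0$, commutation with all of $B=\End_A(U\oplus X)$ forces exactly three conditions on $\psi_X$: it kills $\Tr_U X$, its image lies in $\operatorname{rej}_U X$, and it commutes with $\End_A(X)$. You assert that a nonzero such $\psi_X$ exists whenever $\Tr_U X\neq X$ and $\operatorname{rej}_U X\neq 0$, but nothing in your argument guarantees even $\Hom_A(X/\Tr_U X,\operatorname{rej}_U X)\neq 0$ --- the composition factors of the quotient $X/\Tr_U X$ and of the submodule $\operatorname{rej}_U X$ can a priori be disjoint --- and even when nonzero homomorphisms exist they must additionally be equivariant for the induced actions of the local ring $\End_A(X)$ on $X/\Tr_U X$ and on $\operatorname{rej}_U X$, which are unrelated in general (both submodules are fully invariant, so the actions are defined, but equivariant maps need not exist). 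Your construction does work in degenerate cases (e.g.\ $X$ simple and bad, where $\Tr_U X=0$, $\operatorname{rej}_U X=X$, and $\psi_X=\mathrm{id}$ already defeats balancedness of $U\oplus X$), but for a general bad $X$ the production of a non-balancing element of the bicommutator is precisely the hard content of Morita's theorem, and the classical proof does not proceed by the naive map $X\twoheadrightarrow X/\Tr_U X\to \operatorname{rej}_U X\hookrightarrow X$; it requires a finer analysis. As written, the "if" half of your proof is complete but the "only if" half establishes only $\domdim A\geq 2$, not the dichotomy $\domdim M+\codomdim M\geq 1$.
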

For a modern proof see for example \cite[Section 3.5]{Yam}. 

As a consequence of Theorem \ref{moritatheorem}, we obtain that the property of being QF-1 is left-right symmetric for algebras of positive dominant dimension, that is, $A$ is QF-1 if and only if $A^{op}$ is QF-1 whenever $\domdim A\geq 1$.

\subsection{Torsion pairs}
For a background on the study of torsion pairs, we recommend \citep[Chapter VI]{ASS}. A pair ($\mathcal{T}$, $\mathcal{F}$) of full subcategories of $\rmod A$ is called a \emph{torsion pair} if $$\mathcal{F}=\{X\in \rmod A\colon \Hom_A(M, X)=0, \forall M\in \mathcal{T}\}$$ and $$\mathcal{T}=\{X\in \rmod A\colon \Hom_A(X, N)=0, \ \forall N\in \mathcal{F} \}.$$ $\mathcal{T}$ is called the \emph{torsion class} and $\mathcal{F}$ is usually called the \emph{torsion-free class}. The torsion class is always closed under images, direct sums and extensions, while the torsion-free class is closed under submodules, direct products and extensions. Every torsion pair comes equipped with a functor $t\colon \rmod A\rightarrow \rmod A$ called \emph{torsion radical} such that for every indecomposable $A$-module $M$ there exists a short exact sequence $0\rightarrow tM\rightarrow M\rightarrow M/tM\rightarrow 0$ with $tM\in \mathcal{T}$ and $M/tM\in \mathcal{F}$. Such an exact sequence is unique. As a consequence, every simple module lies either in the torsion class or in the torsion-free class. A torsion pair $(\mathcal{T}, \mathcal{F})$ is called \emph{splitting} if every indecomposable module lies either in $\mathcal{T}$ or in $\mathcal{F}$. In such a case, we also say that the torsion pair splits. 

For a given $M\rmod A$, we define 
$$\gen M =\{X\in \rmod A\colon \ \text{there exists an epimorphism} \ M^n\twoheadrightarrow X \text{ for some } n\in \mathbb{N}\}$$ and
$$\cogen M =\{X\in \rmod A\colon \ \text{there exists a monomorphism} \ X\hookrightarrow M^n \text{ for some } n\in \mathbb{N}\}.$$
In our cases of interest, the torsion class is given by $\gen M$ while the torsion-free class is by $\cogen N$ for some modules $M$ and $N$.

\section{A non-commutative generalisation of the Auslander-Buchsbaum formula for higher Auslander algebras}

In this section, the aim is to prove the main theorem \ref{theoremA}, in particular, a new formula for the projective dimension (Theorem \ref{domdimprojdimformulas}(2)) and a kind of Poincar\'e duality involving the stable and costable module for higher Auslander algebras (Theorem \ref{thm3dot7}). In the process, we obtain several properties of the stable and costable module. As applications, in this section, we prove a non-commutative analogue of the Auslander-Buchweitz formula (Corollary \ref{cor3dot8}). We then use this formula to prove that higher Auslander algebras are of finite representation-type if and only if they have only finitely many indecomposable modules with zero dominant dimension (Theorem \ref{shentheorem}).

For the convenience of the reader, we start with a technical lemma which is a zero analogue of \citep[Proposition 2.6]{APT}.
\begin{lemma}\label{casezeroAPT}
Let $x$ be an idempotent of $A$ and $M\in \rmod A$. Then $\Hom_A(A/AxA, M)=0$ if and only if $I(M)\in \add D(Ax)$.
\end{lemma}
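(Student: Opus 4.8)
The plan is to reduce both sides of the equivalence to conditions on the socle of $M$ and then match them. First I would make the left-hand side concrete. Since $A/AxA$ is a cyclic right module generated by the image $\bar 1$ of the identity, any homomorphism $A/AxA \to M$ is determined by the image $m$ of $\bar 1$, and such an $m$ is admissible exactly when $m\cdot AxA = 0$. A short computation (taking the right factor to be $1$) shows that $m\cdot AxA = 0$ is equivalent to $mAx = 0$, so I obtain a natural identification
\[ \Hom_A(A/AxA, M) \cong \{\, m \in M : mAx = 0 \,\}. \]
Thus $\Hom_A(A/AxA,M) = 0$ means precisely that $M$ has no nonzero element annihilated on the right by $Ax$.

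Next I would reinterpret the right-hand side through socles. Writing $x$ as a sum of orthogonal primitive idempotents $e_i$, $i \in J$, we have $Ax = \bigoplus_{i \in J} Ae_i$ and hence $D(Ax) = \bigoplus_{i \in J} D(Ae_i)$, a direct sum of the indecomposable injective right modules with socles $S_i$. Since the injective hull of $M$ decomposes as $I(M) = \bigoplus_j D(Ae_j)^{\mu_j}$, where $\mu_j$ is the multiplicity of $S_j$ in $\soc M$, the condition $I(M) \in \add D(Ax)$ holds if and only if every simple submodule $S \subseteq M$ is isomorphic to some $S_i$ with $i \in J$. Finally, for a simple module $S$ one has $Sx \neq 0$ precisely when $S \cong S_i$ for some $i \in J$, so the right-hand condition becomes: every simple submodule $S$ of $M$ satisfies $Sx \neq 0$.

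It then remains to match the two reformulations, which I would do with a socle argument in both directions. If $\Hom_A(A/AxA,M) \neq 0$, I pick a nonzero $m$ with $mAx = 0$; the nonzero submodule $mA$ contains a simple submodule $S$, and $Sx \subseteq (mA)x = 0$ forces $Sx = 0$, so $I(M) \notin \add D(Ax)$. Conversely, if some simple submodule $S \subseteq M$ has $Sx = 0$, then any nonzero $s \in S$ satisfies $sAx \subseteq Sx = 0$ (using $SA = S$), producing a nonzero element of $\{\, m : mAx = 0 \,\} \cong \Hom_A(A/AxA, M)$. These two implications are the contrapositives of the two directions of the claim.

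The only genuinely delicate points here are bookkeeping rather than conceptual: keeping the left/right conventions straight, so that $D(Ax)$ is an injective right module with the expected socle, and verifying the reduction $m\cdot AxA = 0 \iff mAx = 0$, which relies on $mAx$ being a subset already killed by further right multiplication. No finiteness beyond the standard structure of finite-dimensional algebras is needed, and the argument never requires $A$ to be basic once everything is phrased in terms of the simple modules $S_i$ with $S_i x \neq 0$.
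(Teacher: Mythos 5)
Your proof is correct, and it takes a genuinely different route from the paper's. You translate both sides into a condition on the socle of $M$: identifying $\Hom_A(A/AxA,M)$ with $\{m\in M : mAx=0\}$, and, after decomposing $x$ into orthogonal primitive idempotents $e_i$ and applying Krull--Schmidt to $I(M)$, reading $I(M)\in\add D(Ax)$ as ``every simple submodule $S\subseteq M$ satisfies $Sx\neq 0$''; the two contrapositive socle arguments then close the equivalence, and the delicate steps all check out (the reduction $mAxA=0\iff mAx=0$ via $mAx=mAx\cdot 1\subseteq mAxA$, the identification $\soc D(Ae_i)\cong\top(e_iA)$, and even the degenerate case $x=0$, where both sides force $M=0$). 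The paper argues functorially instead: for one direction it computes $\Hom_A(A/AxA,D(Ax))\cong \Hom_A(Ax,D(A/AxA))\cong xD(A/AxA)=0$ and embeds $\Hom_A(A/AxA,M)$ into $\Hom_A(A/AxA,I(M))$; for the converse it invokes the fact from Auslander--Platzeck--Todorov that $\Hom_A(A/AxA,I(M))$ is the injective hull of $\Hom_A(A/AxA,M)$, and then, for each indecomposable summand $D(Al)$ of $I(M)$, deduces $l\in AxA$, writes $l=yxw$, and produces a surjection $Ax\twoheadrightarrow Al$, giving $D(Al)\in\add D(Ax)$. Your version is more elementary and self-contained---no external citation and no trace-ideal or idempotent manipulation, only the standard correspondence between simples $S$ with $Se_i\neq 0$ and the indecomposable injectives $D(Ae_i)$---whereas the paper's version deliberately stays inside the framework of \cite{APT}, of which the lemma is presented as the ``zero analogue'' and which is then the engine of Theorem~\ref{domdimprojdimformulas}(1), so its proof doubles as a warm-up for that machinery.
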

\begin{proof}
    Observe that $\Hom_A(A/AxA, D(Ax))\cong \Hom_A(Ax, D(A/AxA))\cong xD(A/AxA)=0$. So, if $I(M)\in \add D(Ax)$, then $\Hom_A(A/AxA, M)\subset \Hom_A(A/AxA, I(M))=0$.

    Conversely, assume that $\Hom_A(A/AxA, M)=0$. Observe that $\Hom_A(A/AxA, I(M))$ is the injective hull of $\Hom_A(A/AxA, M)$ (see for instance \citep[Proposition 1.1 and discussion above it]{APT}). Thus, $\Hom_A(A/AxA, I(M))=0$. Let $l$ be an idempotent so that $D(Al)$ is a direct summand of $I(M)$. Thus, $$0=\Hom_A(A/AxA, D(Al))\cong \Hom_A(Al, D(A/AxA))\cong D(A/AxA)\cdot l\cong D(l\cdot A/AxA).$$
    Hence, $l\in AxA$ and we can write $l=yxw$ for some elements $y, w\in A$. So, the map $Ax\rightarrow Al$, given by $ax\mapsto axwl$, is surjective. It follows that $Al\in \add Ax$ and thus $D(Al)\in \add D(Ax)$. Moreover, $I(M)\in \add D(Ax).$
\end{proof}

\begin{theorem} \label{domdimprojdimformulas}
Let $A$ be a finite-dimensional algebra and $M\in \rmod A$.
\begin{enumerate}
\item $\domdim M:= \inf \{ n \geq 0 \mid \Ext_A^n(\underline{A},M) \neq 0 \},$ if $M$ is not projective-injective.
\item Assume that $M$ has finite projective dimension over $A$. Then, $$\pdim M= \sup \{ n \geq 0 \mid \Ext_A^n(M,\overline{A}) \neq 0 \}.$$ 
\end{enumerate}
\end{theorem}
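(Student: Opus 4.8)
The plan is to establish both formulas by carefully analyzing minimal injective coresolutions and minimal projective resolutions, connecting the vanishing of $\Ext$ groups against $\underline{A}$ and $\overline{A}$ to the structural definitions of dominant dimension and projective dimension respectively.

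**For part (1),** I would first recall that $\underline{A} = A/AfA$, where $\add Af$ is the subcategory of projective-injective left $A$-modules. The key observation is to apply Lemma \ref{casezeroAPT} with $x = f$: this tells us that $\Hom_A(\underline{A}, N) = \Hom_A(A/AfA, N) = 0$ if and only if $I(N) \in \add D(Af)$, i.e.\ the injective hull of $N$ is projective-injective. First I would dimension-shift: for the minimal injective coresolution $0 \to M \to I^0 \to I^1 \to \cdots$, the group $\Ext_A^n(\underline{A}, M)$ can be related to $\Hom_A(\underline{A}, \Omega^{-n}(M))$ via the cosyzygies, since $\Ext_A^n(\underline{A}, M) \cong \Ext_A^1(\underline{A}, \Omega^{-(n-1)}(M))$ and then I must connect $\Ext^1$-vanishing to the projective-injectivity of the relevant injective term. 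The cleanest route is to show inductively that $\Ext_A^i(\underline{A}, M) = 0$ for all $i < n$ precisely when $I^0, \dots, I^{n-1}$ are all projective-injective, using the long exact sequence in $\Ext_A(\underline{A}, -)$ applied to the short exact sequences $0 \to \Omega^{-i}(M) \to I^i \to \Omega^{-(i+1)}(M) \to 0$ together with the fact that $\Ext_A^{j}(\underline{A}, D(Af)) = 0$ for all $j \geq 0$ when $D(Af)$ is injective (so injective terms contribute nothing except at the $\Hom$ level, which Lemma \ref{casezeroAPT} controls). This identifies the smallest $n$ with $\Ext_A^n(\underline{A}, M) \neq 0$ as exactly the smallest $n$ with $I^n$ non-projective-injective, which is $\domdim M$.

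**For part (2),** I would run the formally dual argument on the projective side, working with $\overline{A} = A/AeA$ where $\add eA$ is the projective-injective right modules. Taking a minimal projective resolution $\cdots \to P_1 \to P_0 \to M \to 0$ of finite length $p = \pdim M$, I would show that $\Ext_A^n(M, \overline{A})$ vanishes for $n > p$ trivially, and that $\Ext_A^p(M, \overline{A}) \neq 0$ by reducing to $\Hom$ computations with the top term $\Omega^p(M)$, which is projective and non-zero. The dual of Lemma \ref{casezeroAPT} (or a direct $\Hom$-computation) should show that $\Hom_A(P, \overline{A}) \neq 0$ for a non-zero projective $P$ unless $P$ is projective-injective-cogenerated in a way that forces $\Ext^p$ to survive; here the finiteness of $\pdim M$ guarantees the top of the resolution detects $\overline{A}$.

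**The hard part will be** the base case and the bookkeeping that correctly matches the homological degree of the first non-vanishing $\Ext_A^n(\underline{A}, M)$ with the first non-projective-injective term in the minimal injective coresolution. In particular, I must handle with care the exclusion of the projective-injective case for $M$ (where the formula for $\domdim$ would read off differently), and I must verify that minimality of the coresolution is genuinely needed so that $\add D(Af)$-summands split off cleanly at each stage. The subtle point is ensuring that $\Ext_A^1(\underline{A}, -)$ vanishing on a cosyzygy is equivalent to — not merely implied by — the corresponding injective term being projective-injective, which is where Lemma \ref{casezeroAPT} does the essential work and where I expect the main technical effort to lie.
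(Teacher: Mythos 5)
Your part (1) is fine and is essentially the paper's route: the paper simply cites \cite[Proposition 2.6]{APT} together with Lemma \ref{casezeroAPT}, and your dimension-shifting argument (using that $\add D(Af)$ coincides with the projective-injective right $A$-modules, that $\Hom_A(\underline{A},D(Af))=0$ by the computation in Lemma \ref{casezeroAPT}, and that minimality identifies $I(\Omega^{-n}(M))$ with $I^n$) is exactly a proof of that specialisation, giving $\Ext_A^n(\underline{A},M)\cong \Hom_A(\underline{A},\Omega^{-n}(M))$ for $n\leq \domdim M$ and then reading off the first non-vanishing degree from the lemma.

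Part (2), however, has a genuine gap at the decisive step. After dimension shifting, $\Ext_A^p(M,\overline{A})\cong \Ext_A^1(\Omega^{p-1}(M),\overline{A})$ is the \emph{cokernel} of the restriction map $\Hom_A(P_{p-1},\overline{A})\rightarrow \Hom_A(\Omega^p(M),\overline{A})$ along $\iota\colon \Omega^p(M)=P_p\hookrightarrow P_{p-1}$. Your plan only argues that $\Hom_A(P_p,\overline{A})\neq 0$ — which is indeed true, since minimality forces $P_p$ to have no summand in $\add eA$ (an injective summand of $P_p$ would split off from $P_{p-1}$) and $\Hom_A(gA,\overline{A})\cong (A/AeA)g\neq 0$ for a primitive idempotent $g$ with $gA\notin \add eA$ — but this does \emph{not} imply that the cokernel is non-zero: a non-zero map $P_p\rightarrow \overline{A}$ might extend over $P_{p-1}$, killing the Ext class. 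No ``dual of Lemma \ref{casezeroAPT}'' closes this, because that lemma governs Hom-vanishing, not the non-extendability of maps. The paper fills precisely this hole with Auslander--Reiten duality in the case $\pdim M=1$: if $\Ext_A^1(M,\overline{A})=0$, then $D\Hom_A(\overline{A},\tau M)=0$ by the AR formula (applicable with full Hom since $\pdim M\leq 1$), Lemma \ref{casezeroAPT} gives $I(\tau M)\in \add D(Ae)$, and applying the Nakayama functor $\nu$ to the minimal presentation $0\rightarrow P_1\rightarrow P_0\rightarrow M\rightarrow 0$ shows $\nu P_1\in \add D(Ae)$, hence $P_1\in \add eA$ is injective and the presentation splits — a contradiction; the general case then follows by the dimension shift you already have. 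Alternatively, your sketch can be repaired without AR theory: assuming $\Ext_A^1(\Omega^{p-1}(M),\overline{A})=0$, the canonical surjection $\pi\colon P_p\rightarrow P_p/P_p\!\cdot\!AeA$ extends to $g\colon P_{p-1}\rightarrow P_p/P_p\!\cdot\!AeA$; lifting $g$ through the projective $P_{p-1}$ gives $\tilde{g}\colon P_{p-1}\rightarrow P_p$ with $\tilde{g}\iota=\mathrm{id}_{P_p}+s$, where $\im s\subseteq P_p\!\cdot\!AeA=\Tr_{eA}(P_p)\subseteq \rad P_p$ (the last containment because $P_p$ has no summand in $\add eA$), so $\tilde{g}\iota$ is invertible and $\iota$ splits — again a contradiction. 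As written, though, the proposal identifies the wrong step as ``the hard part'' and is missing this essential idea.
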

\begin{proof}
\begin{enumerate}
    \item This is  obtained as a special case of \cite[Proposition 2.6]{APT} by setting $P=Af$ in the proposition of the article by Auslander, Platzeck and Todorov and Lemma \ref{casezeroAPT}.
    \item 
Of course, $$\pdim_A M\geq \sup \{ n \geq 0 \mid \Ext_A^n(M,A/AeA) \neq 0 \},$$ since $\Ext_A^{i+\pdim M}(M, -)=0$ for $i>0$. So, there is nothing more to show for projective $A$-modules $M$. 

Assume that $\pdim_A M=1$. We want to show $\Ext_A^1(M, A/AeA)\neq 0$. Suppose by contradiction that $\Ext_A^1(M, A/AeA)= 0$.
By the Auslander-Reiten formula (see for instance \citep[IV, Corollary 2.14]{ASS} or \citep[IV, Corollary 4.7]{ARS}),
$$0=\Ext_A^1(M, A/AeA)\cong D\Hom_A(A/AeA, \tau M).$$ Hence, $\Hom_A(A/AeA, \tau M)=0.$ By Lemma \ref{casezeroAPT}, this implies $I(\tau M)\in \add D(Ae)$ since $D(Ae)$ is the injective hull of $\top eA$ and $AeA$ is the trace ideal of $eA$ in $A$. Let \begin{equation}
    0\rightarrow P_1\rightarrow P_0\rightarrow  M\rightarrow 0 \label{eq1}
\end{equation} be the minimal projective resolution of $M$. Then, applying $D\Hom_A(-, A)$ we obtain that \mbox{$0\rightarrow \tau M\rightarrow \nu P_1\rightarrow \nu P_0$} is the beginning of the minimal injective coresolution of $\tau M$. Hence, we get the following implications
\begin{align*}
    \nu P_1\in \add D(Ae) &\implies \Hom_A(P_1, A)\in \add Ae \\&\implies 
    P_1\cong \Hom_A(\Hom_A(P_1, A), A)\in \add \Hom_A(Ae, A)
    \\&\implies P_1\in \add eA. 
\end{align*} But, this implies that $P_1$ is injective and so (\ref{eq1}) would split contradicting $\pdim_A M$ being exactly one. So, this shows that if $\pdim_A M=1$, then $\Ext_A^1(M, A/AeA)\neq 0$.

Assume now that $\pdim_A M=k$ for some natural number $k$. Then, $\pdim_A \Omega^{k-1}(M)=1$ and so by the previous argument 
$$\Ext_A^k(M, A/AeA)\cong \Ext_A^1(\Omega^{k-1}(M), A/AeA)\neq 0.$$ So, $\sup \{ n \geq 0 \mid \Ext_A^n(M,A/AeA) \neq 0 \}\geq n=\pdim_A M$, and thus (2) holds.
\end{enumerate}
\end{proof}

We now prepare for the proof of part (3) of Theorem \ref{theoremA}, which is the most complicated part. The next lemma justifies our choice of notation for $A/AfA$ and $A/AeA$.

\begin{lemma}\label{notationstablemodules}
	Let $A$ be a finite-dimensional algebra with dominant dimension at least two. Let $e$ and $f$ be idempotents of $A$ such that $Af$ and $eA$ are faithful projective-injective $A$-modules. Then, $$A/AfA\cong \underline{\End}_{fAf}(Af) \text{ and } A/AeA\cong \overline{\End}_{fAf}(Af).$$
\end{lemma}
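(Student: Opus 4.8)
The plan is to use the Morita--Tachikawa correspondence to realise $A$ as an endomorphism algebra and then to recognise the two ideals $AfA$ and $AeA$ as the morphisms that factor through projective, respectively injective, $fAf$-modules. Write $B:=fAf$. Since $\domdim A\geq 2$ and $Af$ is the faithful projective-injective left module, the double centralizer property gives an isomorphism of algebras $A\xrightarrow{\ \sim\ }\End_B(Af)$, $a\mapsto\lambda_a$, where $\lambda_a$ denotes left multiplication by $a$ and $Af$ is regarded as a right $B$-module (note $\End_A(Af)^{\op}\cong fAf$, so $B$ does act on the right of $Af$). Under this correspondence $Af$ is a generator-cogenerator of $\rmod B$, and I will show that $\lambda_f$ and $\lambda_e$ are idempotent endomorphisms whose images are, respectively, the projective and the injective part of $(Af)_B$.

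The computational backbone is the following elementary observation, which I would isolate as a preliminary remark: if $M$ is any module, $\epsilon\in\End(M)$ is an idempotent and $N:=\im\epsilon$, then the two-sided ideal of $\End(M)$ consisting of all endomorphisms that factor through $\add N$ equals $\End(M)\,\epsilon\,\End(M)$. Granting this, part $(1)$ reduces to identifying $\im\lambda_f$ with the projective part of $(Af)_B$: the map $\pi\colon Af\to fAf$, $x\mapsto fx$, is a surjection of right $B$-modules split by the inclusion $fAf\hookrightarrow Af$, so $fAf$ is a projective summand of $(Af)_B$ cut out precisely by the idempotent $\lambda_f$, and $\add(\im\lambda_f)=\proj B$. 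The ideal observation then yields that the endomorphisms factoring through projective $B$-modules form the ideal $\End_B(Af)\,\lambda_f\,\End_B(Af)$, which corresponds to $AfA$ under $A\cong\End_B(Af)$; hence $\underline{\End}_B(Af)\cong A/AfA$.

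For part $(2)$ the same mechanism applies with $\lambda_e$ in place of $\lambda_f$, provided one knows that $\im\lambda_e=eAf$ is the injective part of $(Af)_B$, i.e. that $eAf$ is injective as a right $B$-module and $\add(eAf)=\inj B$. This identification is the main obstacle, since the projective side exploited the very concrete split surjection $x\mapsto fx$, whereas the injective side has no such immediate description. I would establish it using the Morita--Tachikawa dictionary: under $A\cong\End_B(Af)$ the indecomposable projective-injective right $A$-modules, i.e. the indecomposable summands of $eA$, correspond exactly to the indecomposable injective summands of the cogenerator $Af$ over $B$, so $e$ acts on $(Af)_B$ as the projection onto its injective part and $\add(eAf)=\inj B$. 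Alternatively, and perhaps more cleanly, one can deduce this by applying part $(1)$ to the opposite algebra $A^{\op}$ (whose faithful projective-injective left module is $D(Af)$, with $\add D(Af)=\add eA$) and transporting the statement back through the duality $D$, which interchanges projective and injective $B$-modules and thereby exchanges $\underline{\End}$ with $\overline{\End}$. Once $\add(eAf)=\inj B$ is in place, the ideal observation gives that the endomorphisms factoring through injective $B$-modules form $\End_B(Af)\,\lambda_e\,\End_B(Af)\cong AeA$, whence $\overline{\End}_B(Af)\cong A/AeA$.
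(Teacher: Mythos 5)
Your proposal is correct and takes essentially the same route as the paper: both pass through the Morita--Tachikawa double centraliser $A\cong\End_{fAf}(Af)$ and identify the endomorphisms of $Af$ factoring through $\proj fAf$ (resp.\ $\inj fAf$) with the two-sided ideals $AfA$ (resp.\ $AeA$). Your idempotent-ideal lemma applied to $\lambda_f$ and $\lambda_e$ is the same mechanism as the paper's equivalences $\add fA\rightarrow\proj fAf$ and $\add eA\rightarrow\inj fAf$ given by right multiplication by $f$, including the identical appeal to the Morita--Tachikawa dictionary to recognise $\add(eAf)=\inj fAf$.
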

\begin{proof}
	It is well known that $eAe\cong fAf$ and $eAf$ is an injective left $eAe$-module and an injective right $fAf$-module. By the Morita-Tachikawa correspondence, there are double centraliser properties $A\cong \End_{eAe}(eA)\cong \End_{fAf}(Af)^{op}$. This means that the pairs $(A, Ae)$, $(A, fA)$ are covers of $eAe\cong fAf$. Hence, the functor given by right multiplication by $f$ restricts to equivalences $\add eA\rightarrow \inj fAf$ and $\add fA\rightarrow \proj fAf$ with quasi-inverse given by $\Hom_{fAf}(Af, -)$. 
	It follows that each homomorphism $Af\rightarrow Af$ that factors through $\inj fAf$ corresponds uniquely to a homomorphism $A\rightarrow A$ that factors through $eA$. Hence, $\overline{\End}_{fAf}(Af)\cong A/AeA$. Analogously, each homomorphism $Af\rightarrow Af$ that factors through $\proj fAf$ corresponds uniquely to a homomorphism $A\rightarrow A$ that factors through $fA$. So, $\underline{\End}_{fAf}(Af)\cong A/AfA$.
\end{proof}

\begin{remark}
    We observe that under the notation of Lemma \ref{notationstablemodules}, we have $\underline{A^{op}}=A/AeA$ and $\overline{A^{op}}=A/AfA$ as left $A$-modules.
\end{remark}

Next, we give an example that shows that the assumption of finite projective dimension in \Thmref\ref{domdimprojdimformulas}(2) cannot be dropped.
\begin{example}
Let $A$ be the bound quiver $K$-algebra
$$\begin{tikzcd}
	\arrow[loop, distance=-5em, out=35, in=-35, "\gamma"] 1 \arrow[r, "\beta"] & 2 \arrow[loop, distance=3em, out=35, in=-35, "\alpha"]
\end{tikzcd} \quad \text{with relations} \quad 0=\gamma^2=\alpha^2=\gamma\beta-\beta\alpha.$$ Then, $A$ is an Iwanaga-Gorenstein algebra with Gorenstein dimension one and also with dominant dimension one. Let $e_i$ be the idempotent associated with the vertex $i$. Then, $e_1A$ is the faithful projective-injective right $A$-module and so $\overline{A}=A/Ae_1A$ is isomorphic to the projective $e_2A$. Observe that the socle $S=\soc(e_2 A)$ of $e_2A$ is not projective and satisfies $\Ext_A^1(S,A)=0$ and thus $\Ext_A^i(S,A)=0$ for all $i >0$ since $\idim A=1$. Hence, the projective dimension of $\soc(e_2A)$ is infinite, but $\sup\{n\geq 0\colon \Ext_A^n(\soc(e_2A), e_2A)\neq 0\}=0.$
\end{example}

We now identify the stable module with the image of the regular module under the inverse higher Auslander-Reiten translate.

\begin{lemma} \label{formulahighertauinverse}
Let $A$ be a higher Auslander algebra of global dimension $g$. Let $f$ be an idempotent of $A$ such that $Af$ is a faithful projective-injective $A$-module. Then, 
$\add \tau_g^{-1}(A)=\add D \Ext_A^g(D(A),A) =\add \underline{A}$, where $D\Ext_A^g(D(A),A)$ inherits the right $A$-module structure from the left $A$-module structure of $D(A)$.

\end{lemma}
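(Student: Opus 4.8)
The plan is to prove the two asserted equalities separately: first the homological identity $\add \tau_g^{-1}(A) = \add D\Ext_A^g(D(A),A)$ by an explicit computation with the inverse Nakayama functor, and then $\add \tau_g^{-1}(A) = \add \underline{A}$ by identifying indecomposable summands.

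For the first equality I would unwind $\tau_g^{-1} = \tau^{-1}\Omega^{-(g-1)}$ using $\nu^{-1} = \Hom_A(D(A),-)$, which sends injectives to projectives and computes $\tau^{-1}$ from a minimal injective copresentation. The key input is the minimal injective coresolution $0 \to A \to I^0 \to \cdots \to I^g \to 0$ of the regular module. Since $\domdim A = g$, the terms $I^0,\ldots,I^{g-1}$ are projective-injective; and since $\domdim A$ is finite while $\gldim A = g$, the coresolution has length exactly $g$ with $I^g\neq 0$ non-projective, so $\idim A_A = g$. Writing $\Omega^{-(g-1)}(A) = \im(I^{g-2}\to I^{g-1})$, its minimal injective copresentation is the tail $I^{g-1}\to I^g$, whence $\tau^{-1}\Omega^{-(g-1)}(A) \cong \operatorname{coker}(\nu^{-1}I^{g-1}\to \nu^{-1}I^g)$. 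Applying $\nu^{-1}=\Hom_A(D(A),-)$ to the injective coresolution is exactly the complex computing the right derived functors of $\nu^{-1}$ on $A$, so this cokernel is $R^g\nu^{-1}(A) = \Ext_A^g(D(A),A)$. The outer $D$ and the transition between the left and right structures on $D(A)$ recorded in the statement then account for the passage between $\Ext$ of right modules (where $D(A)$ is the injective cogenerator of $\rmod A$) and $\Ext$ of left modules under the duality $D$.

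For the second equality I would pass to the summand-by-summand picture. Decomposing $A = \bigoplus_i e_iA$, any projective-injective summand $e_iA$ equals its own injective hull, so $\Omega^{-1}(e_iA) = 0$ and hence $\tau_g^{-1}(e_iA) = 0$ for $g\geq 2$; thus only the non-projective-injective indecomposable projectives contribute to $\tau_g^{-1}(A)$. To see that what survives is precisely $\add \underline{A}$, I would invoke the Morita--Tachikawa description $A \cong \End_{fAf}(Af)$ together with Lemma \ref{notationstablemodules}, under which $\underline{A}=A/AfA$ is the stable endomorphism algebra and its indecomposable summands correspond exactly to the non-(projective-injective) indecomposable summands of the cluster tilting module $Af$. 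Matching these with the surviving summands $\tau_g^{-1}(e_iA)$ --- equivalently, reading off the nonzero summands of $\Ext_A^g(D(A),A)$ from the decomposition of $D(A)$ into indecomposable injectives and discarding the projective ones --- yields the claimed equality of additive closures.

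The main obstacle I anticipate is this second step: the vanishing of $\tau_g^{-1}$ on projective-injectives is immediate, but establishing that the surviving summands are exactly those of $\underline{A}$ (and not merely equinumerous with them) requires the cluster tilting structure and the compatibility of $\tau_g^{-1}$ with the higher Auslander correspondence. A secondary but genuine difficulty is the bookkeeping of left/right module structures and the placement of $D$ in the first step: since $D(A)$ carries a bimodule structure, one must check that the right $A$-action it inherits from its left structure is precisely the one making $\tau_g^{-1}(A) \cong D\Ext_A^g(D(A),A)$ an isomorphism of right $A$-modules.
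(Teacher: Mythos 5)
Your first step is correct and is genuinely more direct than the paper's: where you compute $\tau^{-1}\Omega^{-(g-1)}(A)$ as the cokernel of $\nu^{-1}I^{g-1}\to\nu^{-1}I^g$ and identify it with the top derived functor of $\nu^{-1}=\Hom_A(D(A),-)$ applied to $A$, the paper instead observes that $\Omega^{-(g-1)}(A)$ and $\Omega^1(D(A))$ have injective dimension at most one and the same additive closure (citing an external result) and then applies the Auslander--Reiten formula for modules of injective dimension at most one together with dimension shifting. Your derived-functor argument is self-contained, yields an actual isomorphism $\tau_g^{-1}(A)\cong\Ext_A^g(D(A),A)$ rather than only an equality of additive closures, and your bookkeeping lands on the natural right-module structure (via the left structure of $D(A)$); reconciling the outer $D$ in the statement is a matter of the paper's side conventions and is not where the real difficulty lies.

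The genuine gap is in your second step, and you half-admit it yourself: the equality $\add\tau_g^{-1}(A)=\add\underline{A}$ is the heart of the lemma, and your proposal contains no argument for it. Lemma \ref{notationstablemodules} only identifies $A/AfA$ with the stable endomorphism algebra $\underline{\End}_{fAf}(Af)$; it says nothing about how $\tau_g^{-1}$ (equivalently, which module $\Ext_A^g(I_j,A)$ is for each non-projective injective $I_j$) interacts with this identification, so ``matching these with the surviving summands'' is precisely the assertion to be proved, not a proof. The paper closes this gap with a concrete computation that is absent from your sketch: starting from the projective presentation $(fA)^m\to A\to A/AfA\to 0$, whose kernel is $AfA=\Tr_{fA}(A)$ and hence is generated by $fA$, one applies $\nu$ to obtain $0\to\tau(A/AfA)\to\nu((fA)^m)\to D(A)\to\nu(A/AfA)$; since $\domdim A\geq 1$ forces $\nu(A/AfA)=D\Hom_A(A/AfA,A)=0$ (Theorem \ref{domdimprojdimformulas}(1), Lemma \ref{casezeroAPT}), and $\nu((fA)^m)\cong D(Af)^m$ is projective-injective, this exhibits $\tau(A/AfA)$ as $\Omega^1(D(A))$, whence $\tau^{-1}\Omega^1(D(A))\cong A/AfA$ (using that $A/AfA$ has no projective summands, which itself needs the faithfulness of $Af$), and $\Omega^1(D(A))$ is then compared with $\Omega^{-(g-1)}(A)$. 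Some device of this kind is unavoidable, and your appeal to ``the compatibility of $\tau_g^{-1}$ with the higher Auslander correspondence'' names the obstacle without supplying it. A secondary slip: the indecomposable summands of $\underline{A}$ correspond to the \emph{non-projective} indecomposable summands of the cluster tilting module $Af$ over $fAf$, not to the non-projective-injective ones --- a projective non-injective summand contributes nothing to the stable endomorphism algebra, while an injective non-projective summand does --- so even the combinatorial matching you propose is set up incorrectly as stated.
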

\begin{proof}
We have $\tau_g^{-1}(A) \cong \tau^{-1} \Omega^{-(g-1)}(A).$
Now note that the modules $\Omega^{-(g-1)}(A)$ and $\Omega^1(D(A))$ have injective dimension at most one and the same additive closure (see for instance \citep[Corollary 6.5]{CPsa}). Thus, $$\tau^{-1}(\Omega^1(D(A)))\cong D \Ext_A^1(D(A),\Omega^1(D(A))) \text{ and } D \Ext_A^1(D(A), \Omega^{-(g-1)}(A))\cong D \Ext_A^g(D(A),A)$$ have the same additive closure. Here, we used the formula $\tau^{-1}(M) \cong D \Ext_A^1(D(A),M)$ valid for modules $M$ with injective dimension at most one (see for example \cite[Chapter IV. Corollary 2.14]{ASS}).
Now we want to show that $\tau^{-1}(\Omega^1(D(A))) \cong A/AfA$ or equivalently 
$\tau(A/AfA) \cong \Omega^1(D(A))$.
Take a minimal projective presentation of $A/AfA$:
$$ (fA)^m \rightarrow A \rightarrow A/AfA \rightarrow 0.$$
Here the kernel for the canonical projection map $A \rightarrow A/AfA$ is $AfA=\Tr_{fA}(A)$.
This gives the following exact sequence by the definition of the Auslander-Reiten translate $\tau$:
$$0 \rightarrow \tau(A/AfA) \rightarrow \nu((fA)^m) \rightarrow D(A) \rightarrow \nu(A/AfA).$$
But we have $\nu(A/AfA)= D\Hom_A(A/AfA,A)=0$, since $A$ has dominant dimension at least one using (1) of Theorem \ref{domdimprojdimformulas}.
Thus the exact sequence 
$$0 \rightarrow \tau(A/AfA) \rightarrow \nu((fA)^m) \rightarrow D(A) \rightarrow 0$$
gives that $\tau(A/AfA)$ is the first syzygy module of $D(A)$.
\end{proof}

The next proposition shows that the costable module is the image of the regular module under the higher Auslander-Reiten translate.

\begin{proposition}  \label{taustablehomformula}
		Let $B$ be an algebra with $d$-cluster tilting module $M$
		and $A=\End_B(M)$ the corresponding higher Auslander algebra of dominant dimension $d+1$.
		Then for $X \in \add(M):$
		$$\tau_{d+1}^A(\underline{\Hom}_B(M,X)) \cong \overline{\Hom}_B(M,\tau_d^B(X)).$$
		In particular, $\tau_{d+1}^A(\underline{A})\cong \overline{A}.$
\end{proposition}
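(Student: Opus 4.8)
The plan is to establish the functorial isomorphism $\tau_{d+1}^A(\underline{\Hom}_B(M,X)) \cong \overline{\Hom}_B(M,\tau_d^B(X))$ for $X \in \add(M)$ by unwinding both higher Auslander-Reiten translates via their defining projective/injective presentations and then matching them through the equivalence $\Hom_B(M,-)\colon \add M \to \proj A$ supplied by the higher Auslander correspondence. First I would recall that since $M$ is a $d$-cluster tilting module, we have $\tau_d^B M \in \add M$, so the right-hand side $\overline{\Hom}_B(M,\tau_d^B(X))$ makes sense as an $A$-module. The functor $F := \Hom_B(M,-)$ restricts to an equivalence $\add M \to \proj A$, and the key bookkeeping tool is that $F$ sends syzygies and cosyzygies computed relative to $\add M$ (that is, $d$-almost split sequences and $\add M$-approximation sequences) to honest syzygies and cosyzygies over $A$, because $\gldim A = d+1$ and the projective-injective $A$-modules are exactly $\add F(M)$.

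The central computation has two halves that must be made to agree. On the $A$-side, $\tau_{d+1}^A = \tau^A \Omega_A^{d}$, so I would take $N := \underline{\Hom}_B(M,X)$, build a minimal projective presentation of its $d$-th syzygy, and apply $\nu_A = D\Hom_A(-,A)$ as in the definition of $\tau^A$; by Lemma \ref{formulahighertauinverse} and its proof technique, the relevant syzygy and the Nakayama functor translate through $F$ into a statement purely about $M$ and $\tau_d^B$ over $B$. On the $B$-side, $\tau_d^B = \tau^B \Omega_B^{d-1}$, and the costable Hom $\overline{\Hom}_B(M,\tau_d^B X)$ records exactly the maps into $\tau_d^B X$ modulo those factoring through injectives, which under $F$ becomes the quotient by maps factoring through the projective-injective $A$-modules $\add F(M)$. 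The strategy is to show both sides are computed by the \emph{same} four-term exact sequence obtained from a minimal $\add M$-resolution of $X$ after applying $F$ and $\nu_A$, invoking Theorem \ref{iyama lemma} to convert the higher $\Ext$-groups into the duality that identifies $\tau^A$ with the appropriate cosyzygy construction.

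The main obstacle I expect is the careful identification of the boundary terms: one must verify that applying $F$ to a minimal projective $B$-presentation of $\tau_d^B X$ (equivalently, to the $d$-almost split sequence ending in $X$) produces precisely a minimal projective $A$-presentation of the $d$-syzygy of $\underline{\Hom}_B(M,X)$, with no spurious projective-injective summands appearing or disappearing. This hinges on the minimality of the $\add M$-approximations and on the fact that $F$ is fully faithful on $\add M$, so that the projective cover and the kernel are transported exactly; the stable/costable quotients on both sides are then what account for the $\underline{(-)}$ and $\overline{(-)}$ decorations, and these are precisely the summands in $\add eA$ (respectively $\add Af$) that $F$ sends to projective-injectives. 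Once the general formula is proved, the special case follows by taking $X = M$: then $\underline{\Hom}_B(M,M) = \underline{\End}_B(M) \cong \underline{A}$ and $\overline{\Hom}_B(M,\tau_d^B M)$ must be identified with $\overline{A}$, using Lemma \ref{notationstablemodules} together with the fact that $\tau_d^B M$ spans, up to $\add M$-projectives and injectives, the same additive data as $M$ itself, so that the costable endomorphism module recovers $\overline{\End}_B(M) \cong \overline{A}$.
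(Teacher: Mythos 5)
Your overall skeleton does match the paper's: both transport a resolution of $X$ by objects of $\add M$ through $F=\Hom_B(M,-)$, read off $\tau_{d+1}^A$ by applying the Nakayama functor to the minimal presentation of the $d$-th syzygy of $\underline{\Hom}_B(M,X)$, and invoke Theorem \ref{iyama lemma}. But the concrete mechanism you propose for the central step fails. You claim the key verification is that $F$ applied to a minimal projective $B$-presentation of $\tau_d^B X$ (``equivalently, to the $d$-almost split sequence ending in $X$'') produces a minimal projective $A$-presentation of $\Omega_A^d(\underline{\Hom}_B(M,X))$. It does not: the $F$-image of the $d$-almost split sequence $0\rightarrow \tau_d^B X\rightarrow C_{d-1}\rightarrow\cdots\rightarrow C_0\rightarrow X\rightarrow 0$ is, by the higher Auslander correspondence, the minimal projective resolution of the \emph{simple} $A$-module $\top\Hom_B(M,X)$, a different object altogether. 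The sequence that actually resolves $\underline{\Hom}_B(M,X)$ starts with the projective cover $P_0\twoheadrightarrow X$ over $B$ (not a right almost split map) followed by minimal right $\add M$-approximations of successive kernels, and its leftmost term $M_d$ has, a priori, nothing to do with $\tau_d^B X$; establishing that connection is precisely the content of the proposition, so the plan to ``show both sides are computed by the same four-term exact sequence'' is circular. The two sides are not matched by an identification of presentations but by a duality.

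What is missing is the intermediate object $D\Ext_B^d(X,M)$ and the two computations that meet there. First, applying $\Hom_A(-,A)$ to the transported resolution and using $\Hom_A(\Hom_B(M,M_i),A)\cong\Hom_B(M_i,M)$, the paper identifies $\tau_{d+1}^A(\underline{\Hom}_B(M,X))$ with the kernel of $D\Hom_B(M_d,M)\rightarrow D\Hom_B(M_{d-1},M)$, which equals $D\Ext_B^1(X_{d-1},M)\cong D\Ext_B^d(X,M)$ \emph{only because} $\Ext_B^1(M_{d-1},M)=0$ (projectivity of $P_0$ when $d=1$, the cluster-tilting property when $d>1$) and because dimension shifting is available; this $\Ext$-vanishing appears nowhere in your plan, and without it the kernel computation defining $\tau^A\Omega_A^d$ breaks down. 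Second, Theorem \ref{iyama lemma}(2), applied with $Y=\tau_d^B X\in D(B)^{\perp_{d-1}}$, gives $\overline{\Hom}_B(M,\tau_d^B X)\cong D\Ext_B^d(\tau_d^{-1}(\tau_d(X)),M)$, after which one must handle projective direct summands by writing $X=X'\oplus X''$ with $X''$ the maximal projective summand, using $\tau_d^{-1}(\tau_d(X))\cong X'$ and $\Ext_B^d(X'',M)=0$ — bookkeeping your sketch does not address and which cannot be absorbed into minimality considerations. Your treatment of the special case $X=M$ (via Lemma \ref{notationstablemodules} and the fact that $\tau_d^B M$ together with the injectives generates $\add M$) is fine once the general formula is in hand, but as written the main argument does not close.
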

\begin{proof}
	Consider the exact sequence $0 \rightarrow \Omega^1(X) \rightarrow P_0 \rightarrow X \rightarrow 0$ with $P_0$ the projective cover of $X$ and set $X_1=\Omega^1(X)$. Observe that all maps $M\rightarrow X$ that factor through a projective module must also factor through $P_0\twoheadrightarrow X$, and thus we have the following exact sequence of left $A$-modules \begin{align}
		0\rightarrow \Hom_B(M, X_1)\rightarrow \Hom_B(M, P_0)\rightarrow \Hom_B(M, X)\rightarrow \underline{\Hom}_B(M, X)\rightarrow 0.
	\end{align}
	By assumption, $A$ has global dimension at most $d+1$, thus the projective dimension of $\Hom_B(M, X_1)$ is at most $d-1$. Hence, by projectivization, there are modules $M_i\in \add M$, $i=1, \ldots, d$, so that 
	\begin{align}
		0\rightarrow \Hom_B(M, M_d)\rightarrow \cdots \rightarrow \Hom_B(M, M_1) \rightarrow \Hom_B(M, P_0)\rightarrow \Hom_B(M, X)\twoheadrightarrow \underline{\Hom}_B(M, X). \label{eq13}
	\end{align} is a minimal projective resolution of $ \underline{\Hom}_B(M, X)$.
Since $M$ is a generator of $\rmod B$, all minimal right $\add M$-approximations are surjective and $-\otimes_A M$ is an exact functor, and thus the resolution (\ref{eq13}) is induced by an exact sequence
\begin{align}
	0\rightarrow M_d\rightarrow\cdots\rightarrow M_1\rightarrow M_0\rightarrow X\rightarrow 0, \label{eq14}
\end{align}where $M_0:=P_0$. We claim that $\tau_{d+1}^A\underline{\Hom}_B(M, X)\cong D\Ext_B^d(X, M)$. By applying $\Hom_A(-, A)=(-)^*$ to (\ref{eq13}) we get the exact sequence
\begin{align}
	0\rightarrow \Omega^{d}(\underline{\Hom}_B(M, X))^*\rightarrow \Hom_B(M_{d-1}, M)\rightarrow \Hom_B(M_d, M)\rightarrow \transpose \Omega^{d}(\underline{\Hom}_B(M, X))\rightarrow 0.
\end{align}
since $$\Hom_B(M, M_i)^*=\Hom_A(\Hom_B(M, M_i), A)\cong \Hom_A(\Hom_B(M, M_i), \Hom_B(M, M))\cong \Hom_B(M_i, M).$$ Therefore, $\tau_{d+1}^A\underline{\Hom}_B(M, X)$ is defined as the following kernel
\begin{align}
	0\rightarrow \tau_{d+1}^A\underline{\Hom}_B(M, X) \rightarrow D\Hom_B(M_d, M)\rightarrow D\Hom_B(M_{d-1}, M).
\end{align} Now, applying $D\Hom_B(-, M)$ to (\ref{eq14}) yields the exact sequence
$$D\Ext_B^1(M_{d-1}, M)\rightarrow D\Ext_B^1(X_{d-1}, M)\rightarrow D\Hom_B(M_d, M)\rightarrow D\Hom_B(M_{d-1}, M),$$
where $X_{i}$ denotes the image of $M_i\rightarrow M_{i-1}$ for $i=1, \ldots, d$ and $X_0:=X$. If $d=1$, then $M_{d-1}=P_0$ is projective over $B$ and so $\Ext_B^1(M_{d-1}, M)=0$. If $d>1$, then since $M$ is a $d$-cluster tilting module we obtain that $\Ext_B^1(M_{d-1}, M)=0$ as well. It follows that $\tau_{d+1}^A\underline{\Hom}_B(M, X) \cong D\Ext_B^1(X_{d-1}, M)$. 
If $d=1$, then the Auslander-Reiten formula yields that  $D\Ext_B^1(X_{d-1}, M)=D\Ext_B^1(X, M)\cong \overline{\Hom}_B(M, \tau^B X)$ and the result follows for $d=1$.
Assume now that $d>1$. For this case, we have $\Ext_B^{2}(X_{d-2},M) \cong \Ext_B^{1}(X_{d-1},M)$, by the long exact sequence obtained by applying $\Hom_B(-,M)$ to the exact sequence $0 \rightarrow X_{d-1} \rightarrow M_{d-2} \rightarrow X_{d-2} \rightarrow 0.$ In the same way, induction gives that $\Ext_B^l(X_{d-l},M) \cong \Ext_B^{l-1}(X_{d-(l-1)},M)$ and thus $\Ext_B^1(X_{d-1},M) \cong \Ext_B^d(X,M).$ Since $M$ is a $d$-cluster tilting module, we have that $DB, \tau_d X\in \add M$ and, in particular, $\tau_d X\in DB^{\perp_{d-1}}$. By Theorem  \ref{iyama lemma}, $\overline{\Hom}_B(M, \tau_d X)\cong D\Ext_B^d(\tau_d^{-1}(\tau_d(X)), M)$. Now, write $X=X'\oplus X''$, where $X''$ stands for the maximal direct summand of $X$ which is a projective $B$-module. Observe that $B\in \add M=M^{\perp_{d-1}}$, and thus $X'\in {}^{\perp_{d-1}} B$. So, $\tau_d^{-1}(\tau_d(X))\cong \tau_d^{-1}(\tau_d(X'))\cong X'$ since $X'$ has no trivial projective modules as direct summands. Hence,
$\overline{\Hom}_B(M, \tau_d X)\cong D\Ext_B^d(X', M)\cong D\Ext_B^d(X, M)\cong \tau_{d+1}^A \underline{\Hom}_B(M, X)$.
\end{proof}

We are now ready to prove (3) of Theorem \ref{theoremA}:
\begin{theorem}\label{thm3dot7}
    Let $A$ be a higher Auslander algebra with global dimension $d$ and $n$ an integer. If $M$ is a non-projective-injective $A$-module, then
$$\Ext_A^n(\underline{A},M) \cong D \Ext_A^{d-n}(M,\overline{A}).$$
\end{theorem}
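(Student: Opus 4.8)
The plan is to identify $\underline{A}$ and $\overline{A}$ with higher Auslander--Reiten translates of the regular module and then feed this into Iyama's duality (Theorem \ref{iyama lemma}), reserving a separate treatment for the two extreme degrees $n=0$ and $n=d$. By Lemma \ref{formulahighertauinverse} and Proposition \ref{taustablehomformula} (its index being the dominant dimension $d=\gldim A$ of $A$) I have $\add\underline{A}=\add\tau_d^{-1}(A)$ and $\tau_d(\underline{A})\cong\overline{A}$. Applying Theorem \ref{domdimprojdimformulas}(1) to the regular module gives $\inf\{n:\Ext_A^n(\underline{A},A)\neq0\}=\domdim A=d$, so $\Ext_A^i(\underline{A},A)=0$ for $0\le i\le d-1$; in particular $\underline{A}\in{}^{\perp_{d-1}}A$ and $\Hom_A(\underline{A},A)=0$. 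The latter vanishing also shows $\underline{A}$ has no projective summands, whence $\tau_d^{-1}(\overline{A})\cong\underline{A}$. The two homological facts that drive the boundary cases are
\[
\Hom_A(\underline{A},A)=0 \qquad\text{and}\qquad \Hom_A(D(A),\overline{A})=0,
\]
the first already available, the second proved below; they let me replace the stable and costable Hom spaces appearing in the duality by ordinary Hom spaces.

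For the interior range I would simply invoke Theorem \ref{iyama lemma}(1) with $X=\underline{A}$, $n=d-1$ and $\tau_{n+1}(X)=\tau_d(\underline{A})=\overline{A}$. For $1\le i\le d-1$ this yields $\Ext_A^{d-i}(\underline{A},M)\cong D\Ext_A^i(M,\overline{A})$, i.e.\ the asserted isomorphism for $1\le n\le d-1$ upon setting $n=d-i$; the accompanying stable-Hom isomorphism reads $\underline{\Hom}_A(\underline{A},M)\cong D\Ext_A^d(M,\overline{A})$. Since $\Hom_A(\underline{A},A)=0$ implies $\Hom_A(\underline{A},P)=0$ for every projective $P$, any map $\underline{A}\to M$ factoring through a projective factors through the projective cover $P(M)\twoheadrightarrow M$ and is therefore zero; hence $\Hom_A(\underline{A},M)=\underline{\Hom}_A(\underline{A},M)$, which settles $n=0$. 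The cases $n<0$ and $n>d$ are vacuous, both sides being zero since $\gldim A=d$.

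It remains to treat $n=d$, namely $\Ext_A^d(\underline{A},M)\cong D\Hom_A(M,\overline{A})$. Here I would shift dimensions and use the ordinary Auslander--Reiten formula: $\Ext_A^d(\underline{A},M)\cong\Ext_A^1(\Omega^{d-1}\underline{A},M)\cong D\,\overline{\Hom}_A(M,\tau\Omega^{d-1}\underline{A})=D\,\overline{\Hom}_A(M,\overline{A})$, using $\tau_d=\tau\Omega^{d-1}$. The costable Hom then collapses to the ordinary Hom by the second fact: since $\Hom_A(D(A),\overline{A})=0$, no map $M\to\overline{A}$ factors through an injective, so $\overline{\Hom}_A(M,\overline{A})=\Hom_A(M,\overline{A})$. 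To prove that second fact I would write $\Hom_A(D(A),\overline{A})=\nu^{-1}(\overline{A})$ and compute $\nu^{-1}$ of $\overline{A}=\tau N$ with $N=\Omega^{d-1}\underline{A}$: applying the left exact functor $\nu^{-1}=\Hom_A(D(A),-)$ to the defining sequence $0\to\tau N\to\nu P_1\to\nu P_0$ of a minimal projective presentation $P_1\to P_0\to N\to0$, and using that $\nu^{-1}\nu\cong\mathrm{id}$ on projectives, identifies $\nu^{-1}(\tau N)$ with $\ker(P_1\to P_0)=\Omega^2 N=\Omega^{d+1}(\underline{A})$, which vanishes because $\pdim_A\underline{A}\le\gldim A=d$.

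I expect the genuine obstacle to be the second vanishing $\Hom_A(D(A),\overline{A})=0$: unlike its companion $\Hom_A(\underline{A},A)=0$, it is not merely the $A^{\mathrm{op}}$-dual statement, since dualising turns the right module $\overline{A}=A/AeA$ into $D(A/AeA)$, the dual of the \emph{left} module, rather than into $\overline{A}$ itself; it must therefore be proved directly, which is exactly what the syzygy computation above accomplishes. A secondary point needing care is the clean (not merely additive-closure) identification $\tau_d^{-1}(\overline{A})\cong\underline{A}$ together with the absence of projective summands in $\underline{A}$, both of which follow from $\Hom_A(\underline{A},A)=0$; one should also keep in mind that $\overline{A}$ may itself be projective---being in the image of $\tau$ it need only lack injective summands---so no shortcut through projectivity of $\overline{A}$ is available.
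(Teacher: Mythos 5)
Your proposal is correct, and over most of the range it coincides with the paper's own argument: the vacuous cases $n<0$ and $n>d$, the interior range $1\le n\le d-1$ via Theorem \ref{iyama lemma}(1) applied to $\underline{A}\in{}^{\perp_{d-1}}A$ with $\tau_d(\underline{A})\cong\overline{A}$ from Proposition \ref{taustablehomformula}, and the case $n=0$ via $\underline{\Hom}_A(\underline{A},M)\cong D\Ext_A^d(M,\overline{A})$ together with the collapse $\underline{\Hom}=\Hom$ coming from $\Hom_A(\underline{A},A)=0$ (your derivation of this vanishing from Theorem \ref{domdimprojdimformulas}(1) applied to $M=A$ is equivalent to the paper's appeal to Lemma \ref{casezeroAPT}; also, your parenthetical about the index of Proposition \ref{taustablehomformula} is slightly off --- the proposition with a $d$-cluster tilting module gives $\tau_{d+1}$, so one applies it to the $(d-1)$-cluster tilting module underlying $A$ --- but the conclusion $\tau_d(\underline{A})\cong\overline{A}$ is exactly what the paper uses). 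Where you genuinely diverge is the boundary case $n=d$. The paper applies Theorem \ref{iyama lemma}(2) with $Y=\overline{A}$: it gets $\Hom_A(D(A),\overline{A})=0$ and $\overline{A}\in D(A)^{\perp_{d-1}}$ in one line from $\codomdim D(A)=\domdim A=d$ (the projective cover of $D(A)$ lies in $\add eA$ and $\Hom_A(eA,\overline{A})\cong \overline{A}e=0$), at the cost of then identifying $D\Ext_A^d(\tau_d^{-1}(\overline{A}),M)$ with $D\Ext_A^d(\underline{A},M)$, which forces the bookkeeping with the summand $X$ of $\underline{A}$. You instead dimension-shift, $\Ext_A^d(\underline{A},M)\cong\Ext_A^1(\Omega^{d-1}\underline{A},M)$, apply the classical Auslander--Reiten formula to land on $D\overline{\Hom}_A(M,\overline{A})$, and prove the required vanishing $\Hom_A(D(A),\overline{A})=0$ directly by computing $\nu^{-1}(\overline{A})\cong\ker(P_1\to P_0)=\Omega^{d+1}(\underline{A})=0$ from the minimal presentation of $\Omega^{d-1}\underline{A}$; every step checks out. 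Your route buys the avoidance of the higher duality at the boundary and sidesteps the $\tau_d^{-1}\tau_d$ identification entirely --- indeed your observation that $\Hom_A(\underline{A},A)=0$ forbids projective summands of $\underline{A}$ even streamlines the paper's own wording, where $X=\underline{A}$; the paper's route buys symmetry with the $n=0$ case inside the same machinery and a shorter vanishing argument via the codominant dimension of $D(A)$. You also correctly flag that this vanishing is not the left-right dual of $\Hom_A(\underline{A},A)=0$ and must be argued separately, which is precisely the point where the paper, too, expends its extra effort.
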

\begin{proof}
If $n>d$ or $n<0$, then both terms are zero since $\gldim A=d$.

Since $A$ has dominant dimension $d$, we have $\underline{A} \in {}^{\perp d-1} A$ by (1) of Theorem \ref{domdimprojdimformulas}. Assume  that $n\in \{1, \ldots, d-1\}$ and let $M$ be a non-projective-injective $A$-module.
Now we can use Theorem \ref{iyama lemma}, to obtain: 
$$\Ext_A^n(\underline{A},M) \cong D \Ext_A^{d-n}(M,\tau_d(\underline{A})) \cong D \Ext_A^{d-n}(M,\overline{A}),$$
where we used $\tau_d(\underline{A}) \cong \overline{A}$ as was established in Proposition \ref{taustablehomformula}.
Now assume that $n=0$. By Theorem \ref{iyama lemma}, we obtain that 
$D\Ext_A^d(M, \overline{A})\cong \underline{\Hom}_A(\underline{A}, M). $ By Lemma \ref{casezeroAPT}, $\Hom_A(\underline{A}, A)=0$ and so $\underline{\Hom}_A(\underline{A}, M)\cong \Hom_A(\underline{A}, M).$ It remains to show the case $n=d$.  Since $\codomdim DA=\domdim A=g$ the projective cover of $DA$ is in the additive closure of $eA$. So, $\Hom_A(DA, \overline{A})=0$ because $\Hom_A(eA, \overline{A})\cong A/AeA\cdot e=0.$ Since $\pdim DA=g$ we have $\overline{A}\in DA^{\perp_{d-1}}.$ Finally, by Theorem \ref{iyama lemma}(2), it follows that $\Hom_A(M, \overline{A})\cong \overline{\Hom}_A(M, \overline{A})\cong D\Ext_A^d(X, M)\cong D\Ext_A^d(\underline{A}, M),$
where $X$ is the maximal direct summand of $\underline{A}$ not having non-indecomposable projective direct summands. The second isomorphism is a consequence of Proposition \ref{taustablehomformula} since $X\cong \tau_d^{-1}(\tau_d(\underline{A}))\cong \tau_d^{-1}(\overline{A}).$
\end{proof}

We now obtain the non-commutative Auslander-Buchsbaum formula for higher Auslander algebras as a direct consequence:
\begin{corollary}\label{cor3dot8}
Let $A$ be a higher Auslander algebra of global dimension $d$ and $M$ a module that is not projective-injective. Then, the following equalities hold.
\begin{enumerate}
    \item $\pdim M + \domdim M=\gldim A;$
    \item $\idim M + \codomdim M= \gldim A$.
\end{enumerate}
\end{corollary}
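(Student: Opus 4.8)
The plan is to derive both equalities purely formally from the three parts of Theorem~\ref{theoremA}, namely Theorems \ref{domdimprojdimformulas} and \ref{thm3dot7}, via a single index reflection. The decisive structural point is that a higher Auslander algebra has $\gldim A = d < \infty$, so every module has finite projective dimension and part (2) of Theorem \ref{domdimprojdimformulas} applies without its finiteness hypothesis becoming an issue.

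First I would record the two extremal characterisations for a non-projective-injective module $M$. Theorem \ref{domdimprojdimformulas}(1) gives
$$\domdim M = \inf\{ n \geq 0 \mid \Ext_A^n(\underline{A}, M) \neq 0\},$$
while Theorem \ref{domdimprojdimformulas}(2), usable because $\pdim M < \infty$, gives
$$\pdim M = \sup\{ n \geq 0 \mid \Ext_A^n(M, \overline{A}) \neq 0\}.$$
Next I would invoke Theorem \ref{thm3dot7}, which supplies the isomorphism $\Ext_A^n(\underline{A}, M) \cong D\Ext_A^{d-n}(M, \overline{A})$ for every integer $n$. Since $D$ is a duality, this yields the non-vanishing equivalence $\Ext_A^n(\underline{A}, M) \neq 0 \iff \Ext_A^{d-n}(M, \overline{A}) \neq 0$.

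Writing $S := \{ n \mid \Ext_A^n(\underline{A}, M) \neq 0\}$ and $T := \{ m \mid \Ext_A^m(M, \overline{A}) \neq 0\}$, the equivalence says exactly that $n \in S$ iff $d-n \in T$, so $S = \{ d - m \mid m \in T\}$. Both sets are nonempty: as $M$ is not projective-injective with finite projective dimension, part (2) forces $\pdim M \in T$, whence $T \neq \emptyset$ and consequently $S \neq \emptyset$. Reflecting the index therefore gives $\inf S = d - \sup T$, that is,
$$\domdim M = d - \pdim M,$$
which is precisely equality (1), $\pdim M + \domdim M = \gldim A$.

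For part (2) I would dualise. The standard duality $D$ identifies $\idim_A M = \pdim_{A^{\op}} DM$ and $\codomdim_A M = \domdim_{A^{\op}} DM$, and $A^{\op}$ is again higher Auslander of the same global dimension $d$, with $DM$ non-projective-injective exactly when $M$ is. Applying equality (1) over $A^{\op}$ to $DM$ reads $\pdim_{A^{\op}} DM + \domdim_{A^{\op}} DM = d$, which translates back to $\idim_A M + \codomdim_A M = \gldim A$. I do not anticipate any genuine obstacle, since Theorems \ref{domdimprojdimformulas} and \ref{thm3dot7} already carry the full weight; the only point demanding a little care is the bookkeeping of the reflection $n \mapsto d-n$ together with the routine verification that $S$ and $T$ are nonempty, so that the identity $\inf S = d - \sup T$ is legitimate and both invariants are finite.
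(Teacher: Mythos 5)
Your proposal is correct and follows essentially the same route as the paper: the paper likewise combines Theorem \ref{domdimprojdimformulas}(1),(2) with the duality $\Ext_A^n(\underline{A},M)\cong D\Ext_A^{d-n}(M,\overline{A})$ of Theorem \ref{thm3dot7} to get $\pdim M = d - \domdim M$, and deduces equality (2) by passing to $A^{\op}$. Your explicit bookkeeping with the sets $S$ and $T$ and their nonemptiness is just a slightly more formal phrasing of the paper's vanishing-pattern argument, and it has the small merit of making transparent why $\domdim M$ is automatically finite.
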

\begin{proof}
It is enough to prove the first one since the second follows from the first using that the opposite of a higher Auslander algebra is again a higher Auslander algebra of the same global dimension. Let $M$ be an $A$-module that is not projective-injective.

Assume that $M$ has dominant dimension $r$. Hence, $\Ext_A^i(\underline{A},M)=0$ for $i=0,1,...,r-1$ and $\Ext_A^r(\underline{A},M) \neq 0$, which  gives us that $\Ext_A^{d-r}(M, \overline{A}) \neq 0$ and $\Ext_A^i(M, \overline{A})=0$ for $i>d-r$ using (3) of Theorem \ref{theoremA} and $d=\gldim A$. Indeed,
for $l=1, \ldots, r$, we have
$$\Ext_A^{l+d-r}(M, \overline{A})\cong D\Ext_A^{r-l}(\underline{A}, M)=0. $$
Thus by Theorem \ref{domdimprojdimformulas} (2), we get that $\pdim M=d-r$ and thus $\domdim M + \pdim M = d = \gldim A.$
\end{proof}

In general, we have the following relation between injective (resp. projective) and dominant (resp. codominant) dimensions.

\begin{proposition} \label{inequpdim}
Let $A$ be a finite-dimensional algebra.
\begin{enumerate}
    \item If $M$ is a non-injective $A$-module, then $\idim M > \domdim \tau^{-1}(M).$
    \item If $M$ is a non-projective $A$-module, then $\pdim M > \codomdim \tau(M).$
\end{enumerate}
\end{proposition}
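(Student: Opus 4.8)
The plan is to prove both statements together using duality, since (2) follows from (1) by passing to the opposite algebra: if $M$ is a non-projective $A$-module, then $DM$ is a non-injective $A^{\op}$-module, and the Nakayama functor swaps $\tau$ with $\tau^{-1}$ while the duality $D$ interchanges injective dimension with projective dimension and dominant dimension with codominant dimension. So I would reduce to proving (1) and spend the main effort there.

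For part (1), let $M$ be non-injective with injective dimension $\idim M = m$, and set $N = \tau^{-1}(M)$; I want to show $\domdim N \leq m - 1$, i.e. $\domdim N < \idim M$. The key relation is the classical one coming from the Auslander-Reiten translate: recall $\tau^{-1}(M) \cong D\Ext_A^1(D(A), M)$ when $\idim M \leq 1$, and more generally one relates the cosyzygies of $M$ to the syzygies of $\tau^{-1}(M)$ via the Nakayama functor applied to a minimal injective coresolution. Concretely, take a minimal injective coresolution $0 \to M \to I^0 \to I^1 \to \cdots$. Applying $\nu^{-1} = \Hom_A(D(A), -)$ (or equivalently dualizing a projective presentation) produces the beginning of a minimal projective presentation of $\tau^{-1}(M)$; this is the exact dual of the computation in Lemma \ref{formulahighertauinverse}, where $\nu$ turned a minimal projective resolution into the start of a minimal injective coresolution of $\tau$. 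The point is that the injective coresolution of $M$ controls, via $\nu^{-1}$, the projective resolution of $N = \tau^{-1}(M)$, and hence controls how many of the injective terms in the injective coresolution of $N$ are projective-injective.

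The cleanest route, which I would pursue first, is to argue contrapositively through the dominant-dimension formula of Theorem \ref{domdimprojdimformulas}(1): $\domdim N = \inf\{n \geq 0 \mid \Ext_A^n(\underline{A}, N) \neq 0\}$. Suppose for contradiction that $\domdim N \geq \idim M = m$, so that $\Ext_A^i(\underline{A}, N) = 0$ for $i = 0, \ldots, m-1$. I would then translate an $\Ext$-vanishing statement about $N = \tau^{-1}(M)$ into one about $M$ using the Auslander-Reiten-type isomorphisms (the stable/costable Hom adjunctions and $\tau^{-1}(M) \cong D\Ext^1_A(DA, \Omega^{-1}\text{-type data})$), aiming to force $I^m$ to be projective-injective or to force the coresolution of $M$ to terminate earlier, contradicting $\idim M = m$. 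The strict inequality (rather than $\geq$) is what I expect to fall out of the \emph{minimality} of the resolutions: a projective-injective summand appearing at the relevant homological degree would split off, lowering one of the two dimensions.

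The hard part will be handling the bookkeeping at the top degree and the non-injective/non-projective hypotheses correctly so as to obtain the \emph{strict} inequality rather than merely $\idim M \geq \domdim \tau^{-1}(M)$. In particular, I expect the subtlety to lie in showing that the $m$-th injective term $I^m$ in the coresolution of $M$ cannot be projective-injective when $\idim M = m$ exactly, together with correctly excluding the degenerate cases where $\tau^{-1}(M)$ acquires spurious projective summands (the analogue of the $\tau_d^{-1}(\tau_d(-))$ correction used at the end of the proof of Proposition \ref{taustablehomformula}). Once the degree-$m$ term is pinned down and minimality is invoked to split off any projective-injective, the strict inequality should follow; I would double-check the edge case $m = 0$ (where $M$ being non-injective of injective dimension $0$ is vacuous, so the statement is only content-bearing for $m \geq 1$) and the case where $\tau^{-1}(M)$ is itself projective-injective.
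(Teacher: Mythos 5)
Your reduction of (2) to (1) by duality matches the paper, but your plan for (1) has a genuine gap: the central mechanism is missing, and the specific route you propose would stall. Converting $\domdim \tau^{-1}(M) \geq m$ into $\Ext_A^i(\underline{A}, \tau^{-1}(M)) = 0$ for $i < m$ via Theorem \ref{domdimprojdimformulas}(1) gives Ext-vanishing \emph{into} $\tau^{-1}(M)$, i.e.\ information about the injective coresolution of $\tau^{-1}(M)$; but the data you extract from $M$ (applying $\nu^{-1}$ to a minimal injective copresentation of $M$) is a \emph{projective presentation} of $\tau^{-1}(M)$. These live at opposite ends of the module and you never supply a bridge between them. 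Moreover, the AR-type isomorphisms you lean on (Theorem \ref{iyama lemma}) carry orthogonality hypotheses ($X \in {}^{\perp_{n}}A$, $Y \in D(A)^{\perp_{n}}$) that are unavailable here: the proposition is stated for an arbitrary finite-dimensional algebra. Finally, you anticipate a delicate minimality argument to extract strictness and worry about splitting off projective-injective summands; in fact no such bookkeeping is needed.

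The paper's proof is three lines and rests on two ingredients your sketch never isolates. First, since $M$ is non-injective, $\Ext_A^1(\tau^{-1}(M), M) \neq 0$ --- this comes from the almost split sequence ending at $\tau^{-1}(M')$ for a non-injective indecomposable summand $M'$ of $M$ (equivalently, Auslander--Reiten duality), and it is the engine of the whole argument. Second, and this is the key idea: if $\domdim \tau^{-1}(M) \geq t$, then the first $t$ terms of the minimal injective coresolution of $\tau^{-1}(M)$ are projective-injective, so that sequence read backwards is a projective resolution and $\tau^{-1}(M) = \Omega^t_A(\Omega^{-t}_A(\tau^{-1}(M)))$, i.e.\ the module is a $t$-th syzygy of its own $t$-th cosyzygy. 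Dimension shifting then yields $0 \neq \Ext_A^1(\tau^{-1}(M), M) \cong \Ext_A^{t+1}(\Omega^{-t}_A(\tau^{-1}(M)), M)$, whence $\idim M \geq t+1 > t \geq \domdim \tau^{-1}(M)$. The strict inequality falls out of the shift by one, not from minimality. If you want to salvage your plan, this syzygy--cosyzygy identity is exactly the missing link between the dominant dimension of $\tau^{-1}(M)$ and Ext groups into $M$.
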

\begin{proof}
We prove (1) as the proof of (2) is dual.
Set $\domdim \tau^{-1}(M) \geq t$, then 
$$\tau^{-1}(M) = \Omega^t_A(\Omega^{-t}_A(\tau^{-1}(M))).$$
Since $M$ is non-injective, 
\begin{align}
0\neq \operatorname{Ext}^1_A(\tau^{-1}(M),M) & 
= \operatorname{Ext}^1_A(\Omega^t_A(\Omega^{-t}_A(\tau^{-1}(M))), M)\notag\\
& \cong \operatorname{Ext}^{t+1}_A(\Omega^{-t}_A(\tau^{-1}(M)),M)\notag
\end{align}
This gives us: 
$$\operatorname{idim} M \geq t+1 > t \geq \operatorname{domdim} \tau^{-1}(M).$$ 
\end{proof}

The main result of \cite{S} states that a higher Auslander algebra of global dimension $g$ is of finite representation-type if and only the subcategory of modules of projective dimension $g$ is of finite-type.

We give an extension and an alternative proof of this result using the non-commutative analogue of the Auslander-Buchsbaum formula: 
\begin{theorem} \label{shentheorem}
Let $A$ be a higher Auslander algebra of global dimension $d$. The following assertions are equivalent.
\begin{enumerate}
    \item $A$ is of finite representation-type.
    \item The subcategory $W$ of $A$-modules of projective dimension $d$ is of finite-type.
    \item The subcategory $I$ of $A$-modules of injective dimension $d$ is of finite-type.
    \item The subcategory $D$ of $A$-modules of dominant dimension zero is of finite-type.
    \item The subcategory $C$ of $A$-modules of codominant dimension zero is of finite-type.
\end{enumerate}
\end{theorem}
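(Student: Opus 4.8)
The plan is to reduce the five-fold equivalence to a single core implication and then to prove that implication by a syzygy-shifting induction powered by the Auslander--Buchsbaum formula of Corollary~\ref{cor3dot8}. First, $(1)$ trivially implies each of $(2)$--$(5)$: if $A$ has only finitely many indecomposables, every subcategory is of finite-type. Next I would record two coincidences of subcategories. By Corollary~\ref{cor3dot8}(1), a non-projective-injective module $M$ satisfies $\pdim M = d$ if and only if $\domdim M = 0$, while every projective-injective module has projective dimension $0 \neq d$ and infinite dominant dimension; hence $W = D$ as subcategories, so $(2) \Leftrightarrow (4)$. Dually, Corollary~\ref{cor3dot8}(2) gives $I = C$, so $(3) \Leftrightarrow (5)$. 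It therefore suffices to prove $(4) \Rightarrow (1)$; the implication $(5) \Rightarrow (1)$ then follows by passing to $A^{\op}$, which is again higher Auslander of global dimension $d$, since the duality $D$ carries modules of codominant dimension zero over $A$ to modules of dominant dimension zero over $A^{\op}$ and preserves finite-type.

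For the core implication $(4) \Rightarrow (1)$, the key observation is that the cosyzygy drops the dominant dimension by exactly one on the relevant modules. Writing $\mathcal{D}_k$ for the class of indecomposable, non-projective-injective modules of dominant dimension exactly $k$, I would prove the claim that $\mathcal{D}_k \subseteq \add(\Omega \mathcal{D}_{k-1})$ for every $1 \le k \le d$. Concretely, for $X \in \mathcal{D}_k$ one first checks that $X$ is non-injective (an indecomposable injective non-projective module has dominant dimension $0$), so $\Omega^{-1}X$ is defined with $X \cong \Omega(\Omega^{-1}X)$, and since $X$ is indecomposable and non-projective it is a direct summand of $\Omega Z$ for some indecomposable summand $Z$ of $\Omega^{-1}X$. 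Shifting the minimal injective coresolution gives $\domdim \Omega^{-1}X = k-1$, so $\domdim Z \ge k-1$; on the other hand $X \mid \Omega Z$ forces $\pdim Z \ge \pdim X + 1 = d-k+1$, whence $\domdim Z \le k-1$ by Corollary~\ref{cor3dot8}(1). Thus $\domdim Z = k-1$, i.e.\ $Z \in \mathcal{D}_{k-1}$, proving the claim.

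Iterating the claim, every indecomposable non-projective-injective module $X$ of dominant dimension $k$ is a direct summand of $\Omega^{k}(Z)$ for some $Z \in \mathcal{D}_0 = W$. On such modules $\domdim$ takes only the finitely many values $0,1,\dots,d$, because $\pdim \le \gldim A = d$ and Corollary~\ref{cor3dot8}(1) gives $\domdim = d - \pdim$. Hence every indecomposable non-projective-injective module is a direct summand of some $\Omega^{j}(Z)$ with $Z \in W$ and $0 \le j \le d$. If $W$ is of finite-type there are only finitely many such $Z$, and thus only finitely many indecomposable summands arise among the modules $\Omega^{j}(Z)$; together with the finitely many projective-injective indecomposables this shows that $A$ is of finite representation-type, i.e.\ $(1)$ holds.

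The main obstacle is the middle step, namely pinning down $\domdim Z$ to be exactly $k-1$ rather than merely at least $k-1$: the cosyzygy of an indecomposable may decompose, and a priori its summands could carry larger dominant dimension. The resolution is the two-sided estimate above, in which the upper bound $\domdim Z \le k-1$ comes not from the cosyzygy computation but from feeding the relation $X \mid \Omega Z$ back through the Auslander--Buchsbaum formula. I would also take care over the boundary indices (the case $k=1$, where $Z$ lands in $W$, and the projective case $k=d$), and verify at the outset that the excluded projective-injective modules genuinely lie outside all four subcategories $W$, $D$, $I$ and $C$.
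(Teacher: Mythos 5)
Your proposal is correct and takes essentially the same route as the paper's proof: both reduce via Corollary~\ref{cor3dot8} to the identifications $W=D$ and $I=C$, handle the injective-dimension conditions by passing to $A^{\op}$, and prove the core implication by recovering an indecomposable $X$ with $\domdim X=k>0$ as a syzygy of a dominant-dimension-zero module by pushing through the projective-injective initial segment of its minimal injective coresolution --- the paper merely performs all $k$ cosyzygy steps in one shot (getting $X\cong \Omega^{s}(\Omega^{-s}(X))$ with $\pdim \Omega^{-s}(X)=d$), whereas you iterate one step at a time with your two-sided estimate $\domdim Z=k-1$, which handles the possible decomposability of cosyzygies somewhat more explicitly than the paper does. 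The boundary case you flag ($X$ projective non-injective, $k=d$) is indeed the one point where your stated justification (``$X$ is non-projective'') does not apply, but it is harmless: either observe that all indecomposable summands of $I(X)$ are projective-injective while $X$ is not injective, so Schanuel plus Krull--Schmidt still forces $X\mid \Omega Z$, or simply absorb the finitely many projective indecomposables into the final count alongside the projective-injective ones.
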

\begin{proof}
If $A$ is of finite representation-type, then clearly $W$ is of finite-type.
Now assume that $W$ is of finite-type.
Let $X$ be an indecomposable $A$-module.
We want to show that there are only finitely many such $X$ and thus we can assume that $X$ is not projective-injective as there are only finitely many projective-injective indecomposable $A$-modules.
If $\pdim X=d$, then there are only finitely many such $X$.
Thus we can assume that $\pdim X<d$.
Then we know that $\pdim X+ \domdim X=d$.
Let $s:=\domdim X= d- \pdim X>0$.
Then we have an exact sequence:
$$0 \rightarrow X \rightarrow I^0 \rightarrow I^1 \rightarrow \cdots \rightarrow I^{s-1} \rightarrow \Omega^{-s}(X) \rightarrow 0,$$
where all $I^i$ are projective-injective $A$-modules. 
Now $\Omega^{-s}(X)$ has dominant dimension zero, or else $X$ would have dominant dimension strictly larger than $s$.
Thus $\pdim \Omega^{-s}(X)=d- \domdim \Omega^{-s}(X)=d.$
We also have $X\cong \Omega^s(\Omega^{-s}(X)),$ since all terms $I^i$ are projective-injective.
This means that $X$ is uniquely determined as the $s$-th syzygy module of the module $\Omega^{-s}(X)$ of projective dimension $d$.
But since there are only finitely many indecomposable modules of projective dimension $d$, there are also only finitely many $X$ of projective dimension $d-s$ for any $s$ with $0 \leq s \leq d$. Thus (1) is equivalent to (2).

By Corollary \ref{cor3dot8}, the class of modules of dominant dimension zero coincide with the class of modules of projective dimension $d$. Then, (2) is equivalent to (4). Analogously, (3) is equivalent to (5). The standard duality gives that $A$ is of finite representation-type if and only if $A^{op}$ is of finite representation-type. It also gives that the subcategory of $A$-modules of projective dimension $d$ is of finite-type if and only if the subcategory of $A^{op}$-modules of injective dimension $d$ is of finite-type. So, the equivalence $(1)\Leftrightarrow (2)$ for $A^{op}$ implies that (1) is equivalent to (3).
\end{proof}

\section{QF-1 algebras and higher Auslander algebras}

In this section, we combine Ringel's work \cite{R} with Zito's work \cite{Z} proving Theorems \ref{thmB} and \ref{thmC} and we provide several characterisations and properties of QF-1 higher Auslander algebras. This way, we create new connections between $g$-quasi-tilted algebras, QF-1 algebras, higher Auslander algebras and $g$-cluster tilting modules having only projective or injective modules as direct summands. 

We call $A$ \emph{$g$-quasi-tilted} if $\gldim A \leq g$ and $\pdim M + \idim M \leq 2g-1$ for all indecomposable $A$-modules $M$.
For $g=2$, this gives the classical \emph{quasi-tilted} algebras, see for example the book \cite{HRS}. Note that for an algebra of global dimension $g$ the condition $\pdim M + \idim M \leq 2g-1$ for all indecomposable $A$-modules $M$ is equivalent to the condition that $\pdim M < g$ or $\idim M < g$ for all indecomposable modules $M$.

We present the following conjecture:
\begin{conjecture} \label{mainconjecture}
Let $A$ be a higher Auslander algebra of \textbf{even} global dimension $g$.
Then the following are equivalent:
\begin{enumerate}
\item $A$ is QF-1.
\item $A$ is $g$-quasi-tilted.
\item $\pdim \tau_g(D(A)) \leq g-1$.
\item $\idim \tau_g^{-1}(A) \leq g-1$. 
\end{enumerate}
\end{conjecture}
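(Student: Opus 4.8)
The plan is to translate each of the four conditions into a statement about dominant and codominant dimensions and then play the two halves against each other through the duality $D$. The backbone is the Auslander--Buchsbaum formula of Corollary~\ref{cor3dot8}, which for a non-projective-injective indecomposable $M$ reads $\pdim M=g-\domdim M$ and $\idim M=g-\codomdim M$, together with Morita's criterion (Theorem~\ref{moritatheorem}). I would first prove $(1)\Leftrightarrow(2)$, then $(2)\Rightarrow(4)$, then $(3)\Leftrightarrow(4)$ — all of which are formal and do not use the parity of $g$ — and isolate the single remaining implication $(4)\Rightarrow(1)$ as the place where evenness of $g$ must enter.

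For $(1)\Leftrightarrow(2)$: since $A$ is higher Auslander, $\domdim A=g\ge 2$, so by Theorem~\ref{moritatheorem} the algebra $A$ is QF-1 exactly when $\domdim M+\codomdim M\ge 1$ for every indecomposable $M$. For projective-injective $M$ this is automatic, and for the remaining indecomposables Corollary~\ref{cor3dot8} rewrites it as ``$\pdim M<g$ or $\idim M<g$'', which is precisely $g$-quasi-tiltedness. For $(2)\Rightarrow(4)$: by Lemma~\ref{formulahighertauinverse} one has $\add\tau_g^{-1}(A)=\add\underline{A}$, and every indecomposable summand $N$ of $\underline{A}$ is non-injective (the image of $\tau^{-1}$ has no injective summands), hence non-projective-injective, and satisfies $\Hom_A(\underline{A},N)\ne 0$ via the projection; by Theorem~\ref{domdimprojdimformulas}(1) this forces $\domdim N=0$, so $\pdim N=g$ by Corollary~\ref{cor3dot8}. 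If $A$ is $g$-quasi-tilted this compels $\idim N\le g-1$, whence $\idim\tau_g^{-1}(A)=\idim\underline{A}\le g-1$, which is $(4)$.

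For $(3)\Leftrightarrow(4)$ I would use that $D$ is an exact duality exchanging projectives and injectives, so on the stable categories $D\tau_g^A\cong\tau_g^{-1,A^{op}}D$. Applying this to the injective cogenerator gives $\add\tau_g^A(D(A))=\add D\bigl(\tau_g^{-1,A^{op}}(A^{op})\bigr)=\add D(\underline{A^{op}})$, using Lemma~\ref{formulahighertauinverse} over $A^{op}$. Since $D$ converts projective dimension over $A$ into injective dimension over $A^{op}$, this yields $\pdim_A\tau_g(D(A))=\idim_{A^{op}}\underline{A^{op}}$; that is, condition $(3)$ for $A$ is condition $(4)$ for $A^{op}$. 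As being QF-1 is left--right symmetric for positive dominant dimension and $g$-quasi-tiltedness is manifestly self-dual, once the equivalence $(1)\Leftrightarrow(4)$ is available for every higher Auslander algebra (in particular for $A^{op}$) one obtains $(3)\Leftrightarrow(1)\Leftrightarrow(4)$ for free.

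This leaves $(4)\Rightarrow(1)$, which I expect to be the genuine obstacle and the only step requiring $g$ even. By Lemma~\ref{formulahighertauinverse}, condition $(4)$ is literally the first condition of Theorem~\ref{thmC}, so in view of Theorem~\ref{thmC} it suffices to derive from it the second local condition there, namely $fAe=0$ whenever $Ae$ and $fA$ are non-injective for primitive idempotents. The difficulty is that $(4)$ only controls the indecomposable summands of $\underline{A}$ — a proper subfamily of the category $W$ of modules of projective dimension $g$ (Theorem~\ref{shentheorem}) — whereas QF-1 demands $\codomdim M\ge 1$ for \emph{every} indecomposable $M$ with $\domdim M=0$, i.e. for all of $W$. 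My plan is to propagate the injective-dimension bound from $\underline{A}$ to all of $W$ using the Serre-type duality $\Ext_A^n(\underline{A},M)\cong D\Ext_A^{g-n}(M,\overline{A})$ of Theorem~\ref{thm3dot7} together with $\overline{A}\cong\tau_g(\underline{A})$ (Proposition~\ref{taustablehomformula}). Evenness of $g$ is exactly what supplies the self-paired middle degree $n=g/2$ of this duality, pairing $\underline{A}$ against $\overline{A}$ symmetrically, and I anticipate that this symmetry is what forces $fAe=0$, whereas for odd $g$ no such middle degree exists. Making this propagation rigorous — ruling out an indecomposable with $\pdim M=\idim M=g$ that is invisible to $\underline{A}$ — is the crux, and is presumably why the statement is still only a conjecture.
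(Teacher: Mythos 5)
You have not proved the statement, and you could not have from the paper's toolkit alone: this is Conjecture \ref{mainconjecture}, which the paper itself leaves open. What you do establish coincides exactly with the paper's partial results. Your $(1)\Leftrightarrow(2)$ via Theorem \ref{moritatheorem} and Corollary \ref{cor3dot8} is the paper's Corollary \ref{cor3dot10} (via Theorem \ref{higherauscharacterisation}); your $(2)\Rightarrow(4)$ through summands of $\underline{A}$ having dominant dimension zero is a correct variant of the paper's argument (the paper instead computes $\Ext_A^g(\tau_g^{-1}(A),A)\neq 0$ directly, forcing $\pdim\tau_g^{-1}(A)=g$); and your identification of $(3)$ for $A$ with $(4)$ for $A^{op}$ is Remark \ref{remarkdualityontau}. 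Combined with left--right symmetry of QF-1 this does give $(1)\Rightarrow(3)$ unconditionally, so your net proven content is precisely $(1)\Leftrightarrow(2)$ and $(1)\Rightarrow(3),(4)$ --- no more and no less than the paper proves. Your reduction of the remaining implication via Theorem \ref{TheoremC} is also sound and matches the paper's framing: since $(4)$ is equivalent to $\idim\underline{A}\leq g-1$ by Lemma \ref{formulahighertauinverse}, i.e.\ to condition (1) of Theorem \ref{TheoremC}, the conjecture amounts to showing that for even $g$ this forces the local condition $fAe=0$ of Theorem \ref{TheoremC}(2). Example \ref{example3dot15} (odd $g=5$, where condition (1) of Theorem \ref{TheoremC} holds but condition (2) fails) confirms that parity must genuinely enter at exactly this point.

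The gap is therefore the implication $(3)/(4)\Rightarrow(1)$, which is Conjecture \ref{conjecturesmaller}: the paper knows it only for $g=2$ (via Zito's Theorem \ref{zitotheorem}) and by computer verification for small Nakayama algebras. Your proposed mechanism --- exploiting the self-paired middle degree $n=g/2$ in the duality $\Ext_A^n(\underline{A},M)\cong D\Ext_A^{g-n}(M,\overline{A})$ of Theorem \ref{thm3dot7} together with $\tau_g(\underline{A})\cong\overline{A}$ from Proposition \ref{taustablehomformula} --- is pure heuristic as written: you give no argument extracting the vanishing $fAe=0$ (a statement about $\Hom$ between non-injective projectives on the two sides) from the symmetry of that pairing, and nothing in Theorem \ref{thm3dot7} visibly controls modules with $\pdim M=\idim M=g$, which is exactly the case you yourself flag as ``invisible to $\underline{A}$.'' One smaller inaccuracy: your overview asserts that $(3)\Leftrightarrow(4)$ is ``formal,'' but your own third paragraph shows you only obtain $(3)_A\Leftrightarrow(4)_{A^{op}}$, and closing the loop to $(3)_A\Leftrightarrow(4)_A$ requires the open equivalence $(1)\Leftrightarrow(4)$; as stated the overview contradicts the detailed argument. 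In summary: an honest and well-organised reduction that reproduces Corollary \ref{cor3dot10}, Remark \ref{remarkdualityontau} and the Theorem \ref{TheoremC} reformulation, but the conjectural core remains untouched.
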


Observe that (4) is equivalent to $\idim \underline{A}\leq g-1$ by Lemma \ref{formulahighertauinverse} or even to $\underline{A}$ having positive codominant dimension.
Hence, we claim that not only the homological properties of $\underline{A}$ encode the dominant dimensions in $\rmod A$ but also ring theoretical properties of the algebra $A$ like being QF-1.

The equivalence of (1) and (2) is a consequence of the following result:
\begin{theorem}\label{higherauscharacterisation}
Let $A$ be an algebra of finite global dimension $g$.
Then $A$ is a higher Auslander algebra if and only if $\pdim M+ \domdim M=g$ for every indecomposable $M$ that is not projective-injective.
\end{theorem}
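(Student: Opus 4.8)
The plan is to treat the two implications separately, with the forward direction being essentially a restatement of the Auslander--Buchsbaum formula already proved.

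For the \emph{only if} direction, I would suppose $A$ is a higher Auslander algebra. If $A$ is semisimple then $g=0$ and there are no non-projective-injective modules, so the assertion is vacuous; otherwise $\gldim A = \domdim A = g$ by definition, and Corollary~\ref{cor3dot8} applies directly to give $\pdim M + \domdim M = \gldim A = g$ for every indecomposable $M$ that is not projective-injective. Thus this direction needs no new work beyond citing the results of the previous section.

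For the \emph{if} direction, I would assume the equality $\pdim M + \domdim M = g$ for all non-projective-injective indecomposables and aim to verify $\domdim A = g$. Since $\gldim A = g$ is given and ``higher Auslander'' requires the common value to be at least $2$, the substantive content lies in $g \geq 2$ (the semisimple case $g=0$ being immediate), where $\domdim A = \gldim A$ is precisely the condition sought. The key reduction is that the dominant dimension of the regular module equals the minimum of the dominant dimensions of its indecomposable projective direct summands: a minimal injective coresolution of a direct sum is the direct sum of the minimal injective coresolutions, and a direct sum of injectives is projective precisely when each summand is. First I would split the indecomposable projectives into injective and non-injective ones. A projective-injective summand contributes dominant dimension $\infty$, while a non-injective indecomposable projective $P$ satisfies $\pdim P = 0$, so the hypothesis forces $\domdim P = g$. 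Assuming at least one such $P$ exists, I conclude $\domdim A = g = \gldim A$, so $A$ is higher Auslander.

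The remaining point requiring care is the existence of a non-injective indecomposable projective: if every indecomposable projective were injective, then $A$ would be self-injective, and a self-injective algebra of finite global dimension is semisimple, contradicting non-semisimplicity. The hard part is therefore not a single deep step but the bookkeeping of degenerate situations and conventions (semisimple algebras, and projective-injective modules having infinite dominant dimension). It is also worth noting that the reverse implication invokes the hypothesis only for indecomposable \emph{projective} modules, so its full strength is not needed there.
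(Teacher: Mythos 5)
Your proposal is correct and takes essentially the same route as the paper: the forward direction is exactly the citation of Corollary~\ref{cor3dot8}, and the reverse direction applies the hypothesis to the indecomposable projective non-injective summands, where $\pdim P=0$ forces $\domdim P=g$ and hence $\domdim A=\gldim A$. Your additional bookkeeping (reducing $\domdim A$ to the minimum over indecomposable projective summands, and securing the existence of a non-injective indecomposable projective via non-semisimplicity) is care that the paper's two-line proof leaves implicit under its standing convention that algebras are connected and not semisimple.
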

\begin{proof}
Assume that $\pdim M+ \domdim M=g$ for every indecomposable $M$ that is not projective-injective holds, then it holds for all indecomposable projective non-injective modules $P$, giving that $\domdim P=g$ and thus $\domdim A= \gldim A$.
The converse is a direct consequence of Corollary \ref{cor3dot8}.
\end{proof}

\begin{corollary}
In the Conjecture \ref{mainconjecture} we have that (1) and (2) are equivalent and (1) implies (3) and (4).
This also holds for odd global dimension.
\label{cor3dot10}
\end{corollary}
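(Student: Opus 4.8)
The plan is to reduce every assertion to Morita's characterisation of QF-1 algebras (Theorem \ref{moritatheorem}) combined with the non-commutative Auslander-Buchsbaum formula of Corollary \ref{cor3dot8}; none of the arguments will use the parity of $g$, which is exactly why the corollary extends to odd global dimension. First I would establish the equivalence of (1) and (2). Since $A$ is higher Auslander of global dimension $g$, we have $\domdim A = g \geq 2$, so Theorem \ref{moritatheorem} applies and says that $A$ is QF-1 if and only if $\domdim M + \codomdim M \geq 1$ for every indecomposable $M$. For indecomposable projective-injective $M$ this is automatic, and for indecomposable non-projective-injective $M$ Corollary \ref{cor3dot8} gives $\domdim M = g - \pdim M$ and $\codomdim M = g - \idim M$, whence $\domdim M + \codomdim M = 2g - \pdim M - \idim M$. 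Thus the inequality $\domdim M + \codomdim M \geq 1$ is equivalent to $\pdim M + \idim M \leq 2g-1$, which (holding trivially for projective-injective modules) is precisely the defining condition of $g$-quasi-tilted. As this translation is insensitive to the parity of $g$, the equivalence is valid for odd $g$ as well.

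Next I would prove that (1) implies (4). By Lemma \ref{formulahighertauinverse} we have $\add \tau_g^{-1}(A) = \add \underline{A}$, so it suffices to show $\idim \underline{A} \leq g-1$. Let $X$ be an indecomposable direct summand of $\underline{A}$; if $X$ is projective-injective then $\idim X = 0 \leq g-1$, so assume it is not. The split surjection $\underline{A} \twoheadrightarrow X$ is a nonzero element of $\Hom_A(\underline{A}, X)$, so $\Ext_A^0(\underline{A},X) \neq 0$ and hence $\domdim X = 0$ by Theorem \ref{domdimprojdimformulas}(1). Morita's criterion then forces $\codomdim X \geq 1$, and Corollary \ref{cor3dot8}(2) yields $\idim X = g - \codomdim X \leq g-1$. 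Taking the maximum over all summands gives $\idim \underline{A} \leq g-1$, which is (4).

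Finally, (1) implies (3) should follow by duality. Since QF-1 is left-right symmetric for algebras of positive dominant dimension and $A^{op}$ is again higher Auslander of global dimension $g$, the implication (1)$\Rightarrow$(4) applied to $A^{op}$ gives $\idim_{A^{op}} \tau_g^{-1,A^{op}}(A^{op}) \leq g-1$. I would then apply the standard duality $D$, which converts injective dimension over $A^{op}$ into projective dimension over $A$, intertwines $\tau_g^{-1,A^{op}}$ with $\tau_g^{A}$, and sends the regular $A^{op}$-module to $D(A)$ as a right $A$-module; this transports the previous bound to $\pdim_A \tau_g^{A}(D(A)) \leq g-1$, which is (3). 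Again no parity hypothesis enters.

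I expect the main obstacle to be bookkeeping rather than conceptual: in the second step one must confirm that every non-projective-injective summand of $\underline{A}$ has dominant dimension exactly zero (via the split surjection and Theorem \ref{domdimprojdimformulas}(1)), and in the last step one must carefully track the left-right and $\tau$-versus-$\tau^{-1}$ conventions under $D$, together with the identification $D(A^{op}) \cong D(A)$ as right $A$-modules. Everything else is a direct combination of Theorem \ref{moritatheorem}, Corollary \ref{cor3dot8}, and Lemma \ref{formulahighertauinverse}.
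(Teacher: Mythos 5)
Your proof is correct, and on the equivalence (1)$\Leftrightarrow$(2) it coincides with the paper's: both combine Morita's criterion (Theorem \ref{moritatheorem}) with the formula $\domdim M+\pdim M+\codomdim M+\idim M=2g$ coming from Theorem \ref{higherauscharacterisation}/Corollary \ref{cor3dot8}, and indeed nothing there sees the parity of $g$.

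For (1)$\Rightarrow$(3),(4) you take a genuinely different route. The paper argues directly and symmetrically via Auslander--Reiten theory: since $\Ext_A^1(N,\tau N)\neq 0$ for $N$ non-projective, it obtains $\Ext_A^g(D(A),\tau_g(D(A)))\neq 0$ and $\Ext_A^g(\tau_g^{-1}(A),A)\neq 0$, hence the exact values $\idim \tau_g(D(A))=g$ and $\pdim \tau_g^{-1}(A)=g$, and then condition (2), read on indecomposable summands, forces the complementary dimension to be at most $g-1$. You instead identify $\add \tau_g^{-1}(A)=\add \underline{A}$ via Lemma \ref{formulahighertauinverse}, note that $\Hom_A(\underline{A},X)\neq 0$ for every indecomposable summand $X$ of $\underline{A}$, so $\domdim X=0$ by Theorem \ref{domdimprojdimformulas}(1), and then Morita's criterion plus Corollary \ref{cor3dot8}(2) give $\idim X\leq g-1$; finally you transport (4) for $A^{op}$ into (3) for $A$ using the left--right symmetry of QF-1 and $D\tau_g^{-1}(A)\cong \tau_g(D(A))$, an identification the paper itself records in Remark \ref{remarkdualityontau} (where it makes the same economy, observing that only one of (3), (4) ever needs to be treated). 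What each approach buys: the paper's argument is self-dual, needs neither Lemma \ref{formulahighertauinverse} nor the op-symmetry, and yields the sharper equalities $\idim\tau_g(D(A))=g$ and $\pdim\tau_g^{-1}(A)=g$ rather than mere non-injectivity/non-projectivity; your argument avoids the non-split AR-extension input entirely, and your explicit summand-wise bookkeeping is a genuine improvement in rigor, since the paper's terse step ``$\idim \tau_g(D(A))=g$ implies by (2) that $\pdim \tau_g(D(A))<g$'' is only valid after checking that \emph{every} indecomposable summand $\tau Z$ of $\tau_g(D(A))$ has injective dimension $g$ (via $\Ext_A^1(Z,\tau Z)\neq 0$ for each non-projective summand $Z$ of $\Omega^{g-1}(D(A))$), which the paper leaves implicit. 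The two side-conditions you flag at the end (the split surjection forcing $\domdim X=0$, and the $\tau$-versus-$\tau^{-1}$ conventions under $D$) are exactly the right ones, and both check out against the paper's Remark \ref{remarkdualityontau}.
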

\begin{proof}
Assume that $A$ is a higher Auslander algebra of global dimension $g$ and assume that $A$ is QF-1 and $M$ an is indecomposable $A$-module that is not projective-injective.
Using Theorem \ref{higherauscharacterisation}, we have $\domdim M+\pdim M+\codomdim M+\idim M=2g$, which directly shows the equivalence of (1) and (2).
To see that (1) (which is equivalent to (2)) implies (3) note that $\Ext_A^g(D(A),\tau_g(D(A))\cong\Ext_A^1(\Omega^{g-1}(D(A)),\tau(\Omega^{g-1}(D(A)))$ is non-zero and thus $\idim \tau_g(D(A))=g$. But this implies by (2) that $\pdim \tau_g(D(A))<g$.
Analogously, $\Ext_A^g(\tau_g^{-1}(A), A)\cong \Ext_A^1(\tau^{-1}\Omega^{-(g-1)}(A), \Omega^{-(g-1)}(A))$ is non-zero and $\pdim \tau_g^{-1}(A)=g.$ So, if $A$ is QF-1, we must have $\idim \tau_g^{-1}(A)<g$.
\end{proof}

In general, we have the following global characterisation of QF-1 higher Auslander algebras.

\begin{proposition} \label{prop3dot11}
    Let $A$ be a higher Auslander algebra of global dimension $g$. Let $e$ and $f$ be idempotents of $A$ such that $Af$ and $eA$ are faithful projective-injective $A$-modules.  Then the following are equivalent:
    \begin{enumerate}
        \item $A$ is QF-1.
        \item $A$ is $g$-quasi-tilted.
        \item $\pdim \tau_g(D(A)) \leq g-1$ and $\soc X\cdot AeA\subset \soc eA$ for every indecomposable module $X$ with $\top X\notin \add \top eA$ (i.e. with $\codomdim X=0$).
    \end{enumerate}
\end{proposition}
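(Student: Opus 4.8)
The plan is to deduce all three equivalences from Morita's criterion (Theorem~\ref{moritatheorem}) after rewriting everything in terms of socles. The equivalence of (1) and (2) is already recorded in Corollary~\ref{cor3dot10}, so the real task is (1)$\Leftrightarrow$(3). Since $A$ is higher Auslander we have $\domdim A=g\ge 2$, and Theorem~\ref{moritatheorem} tells us that $A$ is QF-1 precisely when every indecomposable $X$ with $\codomdim X=0$ — equivalently $\top X\notin \add\top eA$ — already satisfies $\domdim X\ge 1$. I would use the standard fact that $\domdim X\ge 1$ holds if and only if the injective hull $I(X)$ is projective-injective, i.e.\ $\soc X\in \add\soc eA$ (the projective-injective right modules being exactly $\add eA$). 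Thus being QF-1 is equivalent to the single assertion that for every indecomposable $X$ with $\top X\notin\add\top eA$ one has $\soc X\in\add\soc eA$, and the whole proof reduces to showing that the two clauses of (3) together encode exactly this assertion.

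The key observation is that for a simple module $S$ one has $S\cdot AeA=S$ if $Se\neq 0$ and $S\cdot AeA=0$ if $Se=0$; hence for any $X$ the submodule $\soc X\cdot AeA$ is precisely the sum of those simple summands $S$ of $\soc X$ with $Se\neq 0$. Therefore the socle clause of (3) says exactly that the simple summands of $\soc X$ supported at $e$ lie in $\add\soc eA$. It then remains to control the summands $S$ with $Se=0$, and this is what the first clause of (3) does. I claim $\pdim\tau_g(D(A))\le g-1$ is equivalent to: every simple $S$ with $Se=0$ lies in $\add\soc eA$. Indeed, as computed in the proof of Corollary~\ref{cor3dot10} one has $\idim\tau_g(D(A))=g$, and since $\tau_g(D(A))$ has no projective summands, Corollary~\ref{cor3dot8} gives $\pdim\tau_g(D(A))\le g-1\Leftrightarrow\domdim\tau_g(D(A))\ge 1\Leftrightarrow\soc\tau_g(D(A))\in\add\soc eA$. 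Finally, using Lemma~\ref{formulahighertauinverse} for $A^{op}$ together with the Remark after Lemma~\ref{notationstablemodules}, Proposition~\ref{taustablehomformula} and the standard duality, I would identify $\tau_g(D(A))$ up to additive closure with $D(\underline{A^{op}})=D({}_A A/AeA)$, whose socle is $D(\top{}_A(A/AeA))=\bigoplus_{Se=0}S$. Since these are exactly the simples $S$ with $Se=0$, the claim follows.

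With these two reformulations the equivalence is immediate. For (1)$\Rightarrow$(3), Corollary~\ref{cor3dot10} yields $\pdim\tau_g(D(A))\le g-1$, while being QF-1 gives $\soc X\in\add\soc eA$ for all indecomposable $X$ with $\codomdim X=0$, whence the socle clause. For (3)$\Rightarrow$(1), I take an indecomposable $X$ with $\top X\notin\add\top eA$ and split $\soc X$ into its $e$-supported part (the simple summands $S$ with $Se\neq 0$) and its complement; the socle clause places the $e$-supported part in $\add\soc eA$, while the first clause places the complement there, so $\soc X\in\add\soc eA$ and $\domdim X\ge 1$, and Theorem~\ref{moritatheorem} gives QF-1. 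The hard part will be the identification of $\soc\tau_g(D(A))$ with the simples $S$ satisfying $Se=0$: this is where the higher Auslander duality, the behaviour under passing to $A^{op}$, and the correct interpretation of the socle containment (as an inclusion of semisimple modules up to additive closure) all have to be combined carefully; once that is settled, the remaining argument is only a bookkeeping of which simple modules each clause of (3) controls.
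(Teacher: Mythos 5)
Your proposal is correct, but it takes a genuinely different route from the paper's. The paper proves (3)$\Rightarrow$(1) structurally: it notes that $eA\oplus\Omega^{g-1}(D(A))$ is a tilting module inducing the torsion pair $(\gen eA,\,\cogen \tau_g(DA))$ with torsion radical $\Tr_{eA}(-)$, and for each indecomposable $X$ of codominant dimension zero it analyses the canonical sequence $0\to \Tr_{eA}X\to X\to \tilde{X}\to 0$, using clause (i) of (3) together with Corollary~\ref{cor3dot8} to get $\domdim \tilde{X}\geq 1$ on the torsion-free part and the socle clause to get $\domdim \Tr_{eA}X\geq 1$ on the torsion part. You instead translate both clauses of (3) into pure bookkeeping on simple modules: your computation $\soc X\cdot AeA=(\soc X)eA=\bigoplus_{Se\neq 0}S$ is right (and your reading of the paper's abuse of notation ``$\subset \soc eA$'' as containment of isomorphism types is the intended one), and your identification of clause (i) with ``every simple $S$ with $Se=0$ lies in $\add \soc eA$'' is valid: the chain $\add\tau_g(DA)=\add D(\tau_g^{-1}({}_AA))=\add D({}_A(A/AeA))$ follows from Lemma~\ref{formulahighertauinverse} applied to $A^{op}$, the remark after Lemma~\ref{notationstablemodules}, and $D\tau^{-1}\cong \tau D$ (this last compatibility is exactly Remark~\ref{remarkdualityontau} in the paper, so you need not invoke Proposition~\ref{taustablehomformula} at all), after which Morita's criterion (Theorem~\ref{moritatheorem}) closes the argument. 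Your approach buys a cleaner, finitely checkable socle-theoretic reformulation of (3) that in effect anticipates Theorem~\ref{TheoremC}; the paper's torsion-pair machinery is heavier here but is reused in Proposition~\ref{prop4dot5} and in the proof of Theorem~\ref{TheoremC}.

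One small repair: to apply Corollary~\ref{cor3dot8} to $\tau_g(DA)$ you need that it has no projective-\emph{injective} summands, which holds because $\tau$-translates of indecomposables are non-injective; your stated reason, that $\tau_g(DA)$ ``has no projective summands'', is false in general (for the Auslander algebra of type $A_2$ one has $\tau_2(DA)\cong P(3)$ projective), though harmless, since projective non-injective summands satisfy $\pdim\leq g-1$ and $\domdim\geq 1$ trivially by the same corollary.
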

\begin{proof}
    Assume that $A$ is QF-1. Let $X$ be an indecomposable $A$-module with $\top X\notin \add \top eA$. Then, we obtain that $\codomdim X=0$. Thus, $\domdim X\geq 1$. Therefore $\Tr_{eA}X=\sum_{f\in \Hom_A(eA, X)} \im f\subset X$ has also dominant dimension at least one. Thus, $\soc X\cdot AeA=\soc \Tr_{eA} X\subset \soc eA.$

    Conversely, assume that $\pdim \tau_g(DA)\leq g-1$. Since $A$ is a higher Auslander algebra of global dimension $g$, the module $T=eA\oplus \Omega^{-1}(A)$ is a $1$-tilting-$(g-1)$-cotilting module with \mbox{$\add T=\add eA\oplus \Omega^{g-1}(DA)$,} $\domdim T\geq g-1$ and $\codomdim T\geq 1$. (see for example \cite{CPsa}). This tilting module determines a torsion pair $(\gen T, \cogen \tau T)$ of $\rmod A$.

    \textbf{Claim 1. $\cogen \tau T=\cogen \tau_g(DA)\subset \{X\in \rmod A\colon \domdim X\geq 1\}$. }
    
Indeed, observe that $\add \tau T=\add \tau eA\oplus \tau \Omega^{g-1}(DA)=\add \tau_g(DA).$ By assumption, $\pdim \tau T\leq g-1$. By Corollary \ref{cor3dot8}, $\domdim \tau T\geq 1$, that is, $\tau T$ embeds into a projective-injective module, so the claim follows.

\textbf{Claim 2. $\gen T=\gen eA$}.

Since $\codomdim T\geq 1$ it is clear that every module in $\gen T$ has positive codominant dimension, and so $\gen T\subset \gen eA$. Since $eA$ is a direct summand of $T$, then it is also clear that $\gen eA\subset \gen T.$

By \citep[Section 1]{zbMATH00941221}, $(\gen eA, \cogen \tau_g(DA))$ is a torsion pair of $\rmod A$ with torsion radical given by the trace $\Tr_{eA}(-)$.
    
Let $X$ be an indecomposable module with $\codomdim X=0.$ 
If $X\subset \cogen \tau T$, then by Claim 1, $X$ has positive dominant dimension. Otherwise, there exists an exact sequence $0\rightarrow \Tr_{eA} X\rightarrow X\rightarrow \Tilde{X}\rightarrow 0$ with $\Tr_{eA} X\in \gen eA, \Tilde{X}\in \cogen \tau_g (DA)$. So $\domdim \Tilde{X}\geq 1$. Assume that (3) holds. 
As $\codomdim X=0$, we get that $\top X\notin \add \top eA$. By assumption, $\soc \Tr_{eA}X\subset \soc eA$, that is, $\domdim \Tr_{eA}X\geq 1.$ Thus, the above sequence gives that $\domdim X\geq 1.$ So, $A$ must be QF-1.

The remaining follows by Corollary \ref{cor3dot10}.
\end{proof}

Making use of the above torsion pair, we can derive the following properties of QF-1 higher Auslander algebras. 

\begin{proposition} \label{prop4dot5}
    Let $A$ be a higher Auslander algebra of global dimension $g$.   The following assertions hold.
\begin{enumerate}
    \item If $A$ is QF-1, then $\Ext_A^1(M, N)=0$ for all simple $A$-modules $M$ and $N$ with $\domdim N=0=\codomdim M$.
    \item Let $eA$ be a faithful projective-injective $A$-module. If $A$ is QF-1, then every simple right $A$-module is either in the top of $eA$ or in the socle of $eA$.
    \item If $\pdim \tau_g(D(A)) \leq g-1$ and $\Ext_A^1(M, N)=0$ for all indecomposable $A$-modules with $\domdim N=0=\codomdim M$, then $A$ is QF-1.
\end{enumerate}
\end{proposition}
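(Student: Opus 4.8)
The plan is to read all three parts through the Morita characterisation of QF-1 algebras (Theorem~\ref{moritatheorem}), which—since a higher Auslander algebra of global dimension $g$ has $\domdim A=g\geq 2$—says precisely that $A$ is QF-1 if and only if $\domdim Z+\codomdim Z\geq 1$ for every indecomposable $Z$. Throughout I will use the two elementary facts that for a \emph{simple} module $S$ one has $\codomdim S\geq 1\iff S\in\add\top(eA)$ (the projective cover of $S$ is projective–injective) and $\domdim S\geq 1\iff S\in\add\soc(eA)$ (the injective hull of $S$ is projective–injective), together with the Auslander–Buchsbaum formula from Corollary~\ref{cor3dot8}. Part~(3) will instead be routed through the torsion pair $(\gen eA,\cogen\tau_g(D(A)))$ of Proposition~\ref{prop3dot11}.

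I would prove (2) first, as it is immediate: let $S$ be simple. Since $A$ is QF-1, Theorem~\ref{moritatheorem} gives $\domdim S+\codomdim S\geq 1$, so either $\domdim S\geq 1$, whence $S\in\add\soc(eA)$ lies in the socle of $eA$, or $\codomdim S\geq 1$, whence $S\in\add\top(eA)$ lies in the top of $eA$. For (1), assume $A$ is QF-1 and let $M,N$ be simple with $\codomdim M=0=\domdim N$; by the characterisations above this means $M\notin\add\top(eA)$ and $N\notin\add\soc(eA)$. Suppose for contradiction that $\Ext_A^1(M,N)\neq 0$ and choose a non-split extension $0\to N\xrightarrow{\iota} E\xrightarrow{p} M\to 0$. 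The key point is that, because $M$ and $N$ are simple, any nontrivial direct-sum decomposition of $E$ forces a section or a retraction of this sequence (analysing the components of $\iota$ and $p$ on a summand, simplicity makes the relevant restriction either an isomorphism onto $M$ or a splitting of $N$), so $E$ must be indecomposable. Now $p$ surjective gives $M\mid\top E$, and since $M\notin\add\top(eA)$ we get $\codomdim E=0$; dually $\iota$ places $N$ as a simple submodule, so $N\mid\soc E$, and since $N\notin\add\soc(eA)$ we get $\domdim E=0$. Thus $E$ is indecomposable with $\domdim E+\codomdim E=0$, contradicting Theorem~\ref{moritatheorem}. Hence $\Ext_A^1(M,N)=0$.

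For (3) I would assume $\pdim\tau_g(D(A))\leq g-1$ together with the stated $\Ext^1$-vanishing, and aim to verify the hypotheses of Proposition~\ref{prop3dot11}(3); since the first condition there is granted, it remains to show that every indecomposable $X$ with $\codomdim X=0$ satisfies $\soc(\Tr_{eA}X)\subseteq\soc(eA)$, i.e. $\domdim \Tr_{eA}X\geq 1$. First I record a useful consequence of $\pdim\tau_g(D(A))\leq g-1$: by Claim~1 in the proof of Proposition~\ref{prop3dot11} we have $\cogen\tau_g(D(A))=\mathcal F\subseteq\{X:\domdim X\geq 1\}$, so any simple with $\domdim=0$ must lie in $\mathcal T=\gen eA$ and hence have $\codomdim\geq 1$; in particular \emph{no simple module has both dimensions zero}, which already disposes of all simple indecomposables in the Morita criterion. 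I would then argue by contradiction: if $A$ is not QF-1, pick an indecomposable $X$ with $\domdim X=\codomdim X=0$ of minimal length; by the previous remark $X$ is non-simple, and I choose a simple $N\hookrightarrow\soc X$ with $\domdim N=0$, so $N\subseteq\rad X$. Applying the hypothesis with $M:=X$ (indecomposable, $\codomdim X=0$) yields $\Ext_A^1(X,N)=0$; feeding this into the long exact sequence of $\Hom_A(-,N)$ attached to $0\to N\to X\to X/N\to 0$, and using that $N\hookrightarrow X$ admits no retraction (as $X$ is indecomposable and non-simple), I obtain $\Ext_A^1(X/N,N)\neq 0$. Since $N\subseteq\rad X$ gives $\top(X/N)=\top X\notin\add\top(eA)$, we have $\codomdim(X/N)=0$, so some indecomposable summand of $X/N$ has codominant dimension zero.

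The main obstacle, and the step I expect to absorb most of the work, is the final \textbf{matching} problem in (3): the non-split extension lives on \emph{some} indecomposable summand $W$ of $X/N$ with $\Ext_A^1(W,N)\neq 0$, while codominant dimension zero is detected on \emph{some}, a priori different, summand. To close the argument I would run the above as an induction on $\dim X$: if the summand $W$ carrying the extension already has $\codomdim W=0$ then $(W,N)$ contradicts the hypothesis outright, and otherwise $W\in\gen eA$ has strictly smaller length, at which point I would split its canonical torsion sequence from Proposition~\ref{prop3dot11} and push the extension class through the appropriate top/socle factor—converting a ``$\domdim\geq 1$'' object into a codominant-dimension-zero quotient and a ``$\codomdim\geq 1$'' object into a dominant-dimension-zero subobject—via pullback/pushout along the torsion sequence, so that the induction hypothesis or the $\Ext^1$-hypothesis applies to a genuinely smaller pair. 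Once $\domdim\Tr_{eA}X\geq 1$ is established for all such $X$, Proposition~\ref{prop3dot11} delivers that $A$ is QF-1.
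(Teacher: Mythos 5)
Parts (1) and (2) of your proposal are correct. Your (1) is essentially the paper's own argument: a non-split extension of simples is indecomposable with top $M$ and socle $N$, hence has dominant and codominant dimension both zero, contradicting Morita's criterion (Theorem \ref{moritatheorem}). Your (2) is actually more direct than the paper's: the paper deduces it from the torsion pair $(\gen eA, \cogen \tau_g(D(A)))$ (whose existence uses $\pdim \tau_g(D(A))\leq g-1$, supplied by Corollary \ref{cor3dot10}), whereas you read it off immediately from $\domdim S+\codomdim S\geq 1$ together with the observations that $\codomdim S\geq 1$ iff the projective cover of $S$ lies in $\add eA$ and $\domdim S\geq 1$ iff $I(S)\in \add eA$. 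That is a legitimate simplification.

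Part (3), however, contains a genuine gap, and it is exactly the ``matching'' step you flagged yourself. After producing the non-split class $\delta\in\Ext_A^1(X/N,N)$, you only know that \emph{some} indecomposable summand $W$ of $X/N$ satisfies $\Ext_A^1(W,N)\neq 0$, while codominant dimension zero is a priori witnessed by a possibly different summand; your proposed repair --- an induction in which you ``split the canonical torsion sequence'' of $W$ and ``push the extension class through the appropriate top/socle factor via pullback/pushout'' --- is not a worked argument. Indeed, if $\codomdim W\geq 1$ then $W\in\gen eA$ is torsion, so its torsion sequence is trivial ($tW=W$) and there is nothing to split, and no mechanism is provided that produces a genuinely smaller pair to which either hypothesis applies. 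The missing idea, which is precisely how the paper closes its version of this step, is that indecomposability of $X$ forces the class to be nonzero on \emph{every} nonzero summand: writing $X/N=\bigoplus_i W_i$ and $\delta=(\delta_i)$ under $\Ext_A^1(\bigoplus_i W_i,N)\cong\bigoplus_i\Ext_A^1(W_i,N)$, if some $\delta_i=0$ then the middle term $X$ is isomorphic to $W_i$ plus the middle term of the remaining class, so $W_i=0$. Hence $\Ext_A^1(W_i,N)\neq 0$ for all $i$, and your hypothesis (with $\domdim N=0$) forces $\codomdim W_i\geq 1$ for every $i$, i.e.\ $\top(X/N)\in\add\top(eA)$; since $N\subseteq \rad X$ gives $\top(X/N)=\top X\notin\add\top(eA)$ (as $\codomdim X=0$), this is an outright contradiction, with no induction or minimality needed --- minimality and the hypothesis $\pdim\tau_g(D(A))\leq g-1$ enter only to exclude simple counterexamples, as you correctly noted. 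For comparison, the paper runs the same splitting trick on the submodule side of the other exact sequence: it takes the torsion sequence $0\rightarrow \Tr_{eA}X\rightarrow X\rightarrow \tilde{X}\rightarrow 0$, decomposes $\Tr_{eA}X=M\oplus M'$ according to dominant dimension, uses the $\Ext^1$-hypothesis to kill the component of the class in $\Ext_A^1(\tilde{X},M')$, concludes $M'=0$ from indecomposability of $X$, and then obtains $\domdim X\geq 1$ because having dominant dimension at least one is closed under extensions. With the splitting observation inserted, your socle-extension variant becomes a valid (and slightly different) proof; without it, the argument as proposed does not go through.
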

\begin{proof}
Let $e$ and $f$ be idempotents of $A$ such that $Af$ and $eA$ are faithful projective-injective $A$-modules.

Suppose that $A$ is QF-1. Let $M$ be a simple $A$-module with $\codomdim M=0$ and $N$ a simple $A$-module with $\domdim N=0$. Let $X$ be an extension of $M$ by $N$. Since $M$ and $N$ are simple, $X$ is either an indecomposable module or the direct sum of $M$ and $N$. Assume that the first case occurs. Then, $\domdim_A\soc X=\domdim_AN=0$ and $\codomdim_A\top X=\codomdim_A M$. So $X$ would be an indecomposable module with zero dominant and codominant dimension, contradicting $A$ being QF-1. So, (1) holds.

Assume that $\pdim \tau_g(D(A)) \leq g-1$ and $\Ext_A^1(M, N)=0$ for all indecomposable $A$-modules with $\domdim N=0=\codomdim M$.
    Let $X$ be an indecomposable module with $\codomdim X=0$. Then, there exists an exact sequence $\delta\colon 0\rightarrow \Tr_{eA} X\rightarrow X\rightarrow \Tilde{X}\rightarrow 0$ such that $\codomdim\Tr_{eA} X>0$ and $ \Tilde{X}\in \cogen \tau_g(DA)$. So, $\Tilde{X}$ is a direct sum of indecomposable modules $M$ with $\codomdim M=0$. Given that $\Ext$ preserves finite direct summands on the first component, we obtain that the $\delta$ is the direct sum of an element in $\Ext_A^1(\Tilde{X}, M)$ with the identity of $M'$, where $M$ is the maximal non-zero direct summand of $\Tr_{eA}X$ with positive dominant dimension and $\Tr_{eA}X=M\oplus M'.$ Since $X$ is indecomposable we must have that $M'=0$ and so $X$ has positive dominant dimension. Hence, $A$ is QF-1.

    We shall now prove (2). Assume that $A$ is QF-1. Then, the torsion pair $(\gen eA, \cogen \tau_g(DA))$ gives that every simple $A$-module is either in $\top (eA)$ or in the socle of $\tau_g(DA)$. Moreover, every simple $A$-module is either in the top of $eA$ or in the socle of $eA$.
\end{proof}

\begin{corollary}\label{cor3dot14}
    Let $A$ be a higher Auslander algebra of global dimension $g$. Assume that there exists a duality functor $(-)^\natural\colon\rmod A\rightarrow \rmod A$ that fixes all the simple $A$-modules and that fixes the faithful projective-injective $A$-module. Then, $A$ is QF-1 if and only if it is semisimple.
\end{corollary}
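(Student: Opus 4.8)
The plan is to prove the two implications separately, with essentially all of the work in the forward direction. The reverse implication is immediate: over a semisimple algebra every module is projective, injective and balanced, so $A$ is automatically QF-1, and here the duality is not needed at all. For the converse I assume $A$ is QF-1 and aim to show that the postulated duality forces every indecomposable projective module to be injective; since $A$ is a higher Auslander algebra we have $\gldim A=g<\infty$, and a self-injective algebra of finite global dimension is semisimple, so the conclusion follows.

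First I fix an idempotent $e$ with $eA$ the faithful projective-injective $A$-module and invoke Proposition \ref{prop4dot5}(2): because $A$ is QF-1, every simple right $A$-module occurs either in $\top(eA)$ or in $\soc(eA)$. The strategy is to collapse this dichotomy by means of the self-duality, which is the crux of the argument.

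Next I exploit that $(-)^\natural$ is a contravariant self-equivalence of $\rmod A$, so it interchanges submodules with quotients and in particular satisfies $(\soc M)^\natural\cong \top(M^\natural)$ for every $M\in\rmod A$. Applying this to $M=eA$, and using that $(-)^\natural$ fixes $eA$ and fixes every simple module (hence, being additive, fixes the semisimple module $\soc(eA)$), I obtain
\[
\soc(eA)\cong (\soc(eA))^\natural\cong \top\!\big((eA)^\natural\big)\cong \top(eA).
\]
Thus $\top(eA)$ and $\soc(eA)$ have exactly the same simple constituents, and combined with the previous step this shows that \emph{every} simple $A$-module lies in $\top(eA)$.

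Finally, every simple lying in $\top(eA)$ means that the projective cover of each simple is a direct summand of $eA$; since the summands of $eA$ are precisely the projective-injective modules, every indecomposable projective $A$-module is projective-injective, so $A_A$ is injective and $A$ is self-injective. As $\gldim A<\infty$ and a self-injective algebra of finite global dimension is semisimple, I conclude that $A$ is semisimple, completing the forward direction. I expect the main obstacle to be the correct interaction of the self-duality with the top and socle of $eA$ in the third paragraph; once $\soc(eA)\cong\top(eA)$ is in hand, Proposition \ref{prop4dot5}(2) together with the standard homological fact about self-injective algebras closes the argument cleanly.
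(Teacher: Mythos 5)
Your proof is correct and takes essentially the same route as the paper: both use Proposition \ref{prop4dot5}(2) together with the duality (which fixes the simples and $eA$ and interchanges socle and top) to obtain $\top(eA)\cong\soc(eA)$, deduce that $eA$ is a generator so that $A$ is self-injective, and conclude semisimplicity from finiteness of the global dimension. The only cosmetic difference is that you apply the duality to $\soc(eA)$ while the paper applies it to $\top(eA)$, and you spell out the final step (self-injective plus finite global dimension implies semisimple) which the paper leaves implicit.
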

\begin{proof}
It is clear that every semisimple algebra is QF-1.

Conversely, assume that $A$ is QF-1. Let $eA$ be a faithful projective-injective module.
By assumption, $\top eA=(\top eA)^\natural\cong \soc (eA)^\natural \cong \soc eA$.
  As $\top eA=\soc eA$, we obtain by Proposition \ref{prop4dot5} that $eA$ is a generator, and so $A$ is self-injective. So $A$ must be semisimple.
\end{proof}

By the above corollary, we see that the blocks of representation-finite Schur algebras are not QF-1 algebras. Moreover, there are no (non-semisimple) quasi-hereditary algebras with a simple preserving duality being higher Auslander and QF-1 at the same time.

\begin{remark}\label{remarkdualityontau}
Observe that $\tau_g(D(A))\cong D\transpose D\Omega^{-(g-1)}(A)\cong D\tau_g^{-1}(A)$ both as right and as left $A$-modules. So, $\tau_g^{-1}(A)$ has positive codominant dimension as right $A$-module (resp. as left $A$-module) if and only if $\tau_g(D(A))$ has positive dominant dimension as left $A$-module (resp. as right $A$-module). Since being QF-1 is left-right symmetric, it is enough to study whether (3) implies (1) in Conjecture \ref{mainconjecture}.
\end{remark}

By our results in this section, the Conjecture \ref{higherauscharacterisation} can be shortened to state the following:
\begin{conjecture} \label{conjecturesmaller}
Let $A$ be a higher Auslander algebra of even global dimension $g$.
If \mbox{$\pdim \tau_g(D(A)) \leq g-1$}, then $A$ is QF-1.
\end{conjecture}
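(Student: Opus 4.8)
The plan is to reduce the statement to a first-extension vanishing and then to establish that vanishing with Iyama's higher duality, exploiting the parity of $g$. By Theorem \ref{moritatheorem} (applicable since $\domdim A=g\geq 2$) together with Corollary \ref{cor3dot8}, $A$ is QF-1 precisely when no indecomposable module has $\pdim=\idim=g$: for non-projective-injective $M$ one has $\domdim M=g-\pdim M$ and $\codomdim M=g-\idim M$, so Morita's inequality $\domdim M+\codomdim M\geq 1$ fails exactly for such extremal $M$. Granting the hypothesis, Proposition \ref{prop4dot5}(3) then reduces the whole statement to
\[ \Ext_A^1(M,N)=0 \qquad \text{for all indecomposable } M,N \text{ with } \idim M=g=\pdim N. \]
Throughout I would use the hypothesis in its equivalent form $\codomdim\underline{A}\geq 1$, that is $\idim\underline{A}\leq g-1$, obtained from Lemma \ref{formulahighertauinverse} and Remark \ref{remarkdualityontau}.

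The modules in play are extremal: $\idim M=g$ is equivalent to $\codomdim M=0$, hence $\Hom_A(M,\overline{A})\neq 0$, while $\pdim N=g$ is equivalent to $\domdim N=0$, hence $\Hom_A(\underline{A},N)\neq 0$ (both by Theorem \ref{domdimprojdimformulas}(1) and its dual). Assuming the perpendicularity $M\in{}^{\perp_{g-1}}A$, Iyama's formula (Theorem \ref{iyama lemma}(1) with $i=g-1$) converts the top-degree extension group into a bottom-degree one:
\[ \Ext_A^1(M,N)\cong D\Ext_A^{g-1}(N,\tau_g M). \]
Since $\tau_g$ carries the extremal module $\underline{A}$ to $\overline{A}$ (Proposition \ref{taustablehomformula}) and the hypothesis pins down $\underline{A}$ via $\codomdim\underline{A}\geq 1$, I would use this identity, together with the companion relating $\Ext_A^{g-1}$ back to $\Ext_A^1$ (Theorem \ref{iyama lemma}(1) with $i=1$), to play degree $i$ against degree $g-i$. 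Here the parity of $g$ is essential: for $g=2m$ the middle degree $m$ is the unique self-paired one, so the duality specialises to a self-duality of $\Ext_A^m$ between the modules at hand, and it is precisely this self-dual middle term that I would aim to annihilate using $\codomdim\underline{A}\geq 1$; for odd $g$ there is no such fixed middle degree, matching the fact that the conjecture is posed only in the even case.

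The genuine difficulties are twofold. First, I must justify that every indecomposable with $\idim=g$ (respectively $\pdim=g$) lies in ${}^{\perp_{g-1}}A$ (respectively $D(A)^{\perp_{g-1}}$), which is needed before Theorem \ref{iyama lemma} applies and is not automatic; I would try to extract this from the torsion pair $(\gen eA,\cogen\tau_g(D(A)))$ already exploited in Proposition \ref{prop3dot11}, where $\cogen\tau_g(D(A))$ consists of modules of positive dominant dimension under the hypothesis. Second, and this is the real crux, the even-parity argument so far yields only a symmetry of a middle extension group, not its vanishing; to close the gap I would try to show that the induced self-pairing in degree $m$ is alternating, so that indecomposability of $M$ and $N$ forces it to be trivial, and then transport the vanishing back to $\Ext_A^1(M,N)$. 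Turning this self-duality into an honest vanishing, and identifying exactly which feature of even $g$ makes it work while the odd case breaks down, is the step on which the whole proof stands or falls.
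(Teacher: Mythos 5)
You set out to prove a statement that the paper itself leaves open: Conjecture \ref{conjecturesmaller} is stated as a conjecture, supported only by Zito's result for $g=2$ (Theorem \ref{zitotheorem}), by computer verification for Nakayama algebras of bounded size, and by the failure in odd global dimension (Example \ref{example3dot15}). There is therefore no paper proof to compare against, and your proposal does not close the gap either — you concede as much at your own ``crux''. Your opening reduction is sound and matches the paper's machinery: by Corollary \ref{cor3dot8} and Theorem \ref{moritatheorem}, QF-1 fails exactly at indecomposables with $\pdim M=\idim M=g$, and Proposition \ref{prop4dot5}(3) reduces everything to $\Ext_A^1(M,N)=0$ for indecomposables with $\codomdim M=0=\domdim N$. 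But note what the paper does with this same reduction in Theorem \ref{TheoremC}: the $\Ext^1$-vanishing is deduced there from the \emph{additional} idempotent condition (2) ($fAe=0$ for the relevant primitive idempotents), not from $\pdim\tau_g(D(A))\le g-1$ alone, and Theorem \ref{TheoremC} shows QF-1 is equivalent to the conjunction of (1) and (2). So proving the conjecture amounts to showing that (1) forces (2) when $g$ is even — Example \ref{example3dot15} is precisely a case where (1) holds, (2) fails, and $g$ is odd. Your parity heuristic never engages with condition (2) at all, so even if your middle-degree argument were completed it would have to secretly produce that idempotent condition, and nothing in the proposal indicates a mechanism for this.

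Two further steps would fail as written. First, invoking Theorem \ref{iyama lemma} for an arbitrary extremal module $M$ requires $M\in{}^{\perp_{g-1}}A$; the paper only ever applies that duality to $\underline{A}$ (and dually to $\overline{A}$), where the perpendicularity is guaranteed by $\domdim A=g$ via Theorem \ref{domdimprojdimformulas}(1). For a general indecomposable with $\idim M=g$ this hypothesis is simply false in general — already over Auslander algebras there are modules with $\Ext_A^1(M,A)\neq 0$ — and your hope of extracting it from the torsion pair $(\gen eA,\cogen\tau_g(D(A)))$ is misdirected, since the modules you must handle have $\codomdim M=0$ and so are exactly the ones \emph{not} controlled by positive dominant dimension statements. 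Second, the ``alternating self-pairing'' step has no content as stated: for $M\not\cong N$ the space $\Ext_A^m(M,N)$ carries no natural bilinear form, a vector space isomorphic to its own dual need not vanish, and even a genuinely alternating form can be nonzero; the duality $\Ext_A^n(\underline{A},M)\cong D\Ext_A^{g-n}(M,\overline{A})$ of Theorem \ref{thm3dot7} is an isomorphism of spaces attached to the fixed modules $\underline{A}$, $\overline{A}$, and it yields no pairing on a single $\Ext$-group between two arbitrary extremal indecomposables. In short: the reduction is correct and parallels the paper, but the two pillars on which your argument rests (perpendicularity of extremal modules, and vanishing via a middle-degree self-duality) are unsupported, and the structural obstacle identified by Theorem \ref{TheoremC} — deriving condition (2) from condition (1) in even dimension — remains untouched.
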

The conjecture is true for linear Nakayama algebras with at most 14 simple modules and cyclic Nakayama algebras with at most 12 simple modules, which was verified using a computer.
We remark that the conjecture is true for $g=2$ using a recent result of Zito. As condition (3) of Proposition \ref{prop3dot11} suggests Conjecture \ref{conjecturesmaller} cannot be true for arbitrary higher Auslander algebras. Before we continue, we remark that the conjecture is wrong for odd $g$ instead of even $g$ by the following example:
\begin{example}\label{example3dot15}
The Nakayama algebra with Kupisch series [3,3,3,4] is a higher Auslander algebra and has global dimension $5$ and $\tau_g(D(A))$ has projective dimension 3, but $A$ is not QF-1.
\end{example}

In \cite{Z}, Zito proved the following, which confirms Conjecture \ref{conjecturesmaller} for $g=2$:
\begin{theorem} \label{zitotheorem}
Let $A$ be an Auslander algebra.
Then the following conditions are equivalent:
\begin{enumerate}
\item $A$ is a tilted algebra.
\item $A$ is quasi-tilted.
\item $\pdim \tau \Omega^1(D(A)) \leq 1$.

\end{enumerate}
\end{theorem}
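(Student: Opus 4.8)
Since $A$ is an Auslander algebra we have $g=\gldim A=\domdim A=2$, and $\tau\Omega^1(D(A))=\tau_2(D(A))=\tau_g(D(A))$, so condition (3) is exactly $\pdim\tau_g(D(A))\leq g-1$. The plan is to establish $(2)\Leftrightarrow(3)$ with the Auslander--Buchsbaum machinery of this section and $(1)\Leftrightarrow(2)$ with the classical structure theory of (quasi-)tilted algebras, organising everything into the implications $(1)\Rightarrow(2)$, $(2)\Rightarrow(3)$, $(3)\Rightarrow(2)$ and $(2)\Rightarrow(1)$.

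For $(1)\Rightarrow(2)$ I would simply recall that every tilted algebra is quasi-tilted (Happel--Reiten--Smal\o), which needs nothing beyond $\gldim A=2$. For $(2)\Rightarrow(3)$ I would note that, since $g=2$, ``quasi-tilted'' and ``$g$-quasi-tilted'' coincide; by Corollary \ref{cor3dot10} the latter is equivalent to $A$ being QF-1, and QF-1 implies $\pdim\tau_g(D(A))\leq g-1$. The mechanism here is that $\idim\tau_g(D(A))=g$, as recorded in the proof of Corollary \ref{cor3dot10}, so quasi-tiltedness forces the projective dimension of $\tau_g(D(A))$ strictly below $g$.

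The heart of the argument is $(3)\Rightarrow(2)$, which is precisely Conjecture \ref{conjecturesmaller} in the case $g=2$. Here I would run through Proposition \ref{prop3dot11}, which characterises QF-1 (equivalently $g$-quasi-tilted) for a higher Auslander algebra as the conjunction of (3) with the socle condition ``$\soc X\cdot AeA\subseteq\soc eA$ for every indecomposable $X$ with $\codomdim X=0$''. Thus the whole problem reduces to showing that, when $g=2$, condition (3) already implies this socle condition. Following the torsion-pair argument of Proposition \ref{prop3dot11}, for an indecomposable $X$ with $\codomdim X=0$ the torsion pair $(\gen eA,\cogen\tau_g(D(A)))$ yields a short exact sequence $0\rightarrow\Tr_{eA}X\rightarrow X\rightarrow\widetilde X\rightarrow 0$ with $\widetilde X\in\cogen\tau_g(D(A))$ and $\Tr_{eA}X\in\gen eA$; condition (3) together with Corollary \ref{cor3dot8} gives $\domdim\widetilde X\geq 1$, so everything comes down to proving $\domdim\Tr_{eA}X\geq 1$, i.e.\ that the trace submodule, a quotient of copies of the projective--injective $eA$, itself embeds into a projective--injective. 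This last step is the main obstacle: the low global dimension $g=2$ must be exploited, for instance by analysing $\Tr_{eA}X$ through its presentation by copies of $eA$ together with the fact that its non-projective--injective summands satisfy $\codomdim\geq 1$, and hence $\idim\leq 1$ by Corollary \ref{cor3dot8}(2). This is exactly where the detailed Auslander--Reiten-theoretic analysis of Zito's proof enters, and where, as Example \ref{example3dot15} and condition (3) of Proposition \ref{prop3dot11} show, no purely formal argument can be expected to work for general even $g$.

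For the remaining implication $(2)\Rightarrow(1)$ I would invoke the Happel--Reiten--Smal\o\ dichotomy that a connected quasi-tilted algebra is either tilted or quasi-tilted of canonical type, and then exclude the canonical type. The plan here is to use that algebras of canonical type are representation-infinite and derived equivalent to a hereditary category of coherent sheaves on a weighted projective line, whereas $A$ carries a faithful projective--injective module and has $\domdim A=2$; I would argue that this dominant-dimension (equivalently, representation-type) constraint is incompatible with the canonical type, forcing $A$ to be tilted. Assembling the four implications yields $(1)\Leftrightarrow(2)\Leftrightarrow(3)$, and I expect the genuinely hard input to be confined to the socle condition in $(3)\Rightarrow(2)$.
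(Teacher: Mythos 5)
There is a genuine gap: your proposal defers exactly the two implications that carry the content of the theorem. For $(3)\Rightarrow(2)$ you reduce, via Proposition \ref{prop3dot11}, to showing $\domdim \Tr_{eA}X\geq 1$ for every indecomposable $X$ with $\codomdim X=0$, and then explicitly leave this as a black box (``this is exactly where the detailed Auslander--Reiten-theoretic analysis of Zito's proof enters''). But this is the step the theorem asks you to supply, and the paper's own treatment shows there is a short formal mechanism you missed: when $g=2$, Claim 1 of the proof of Proposition \ref{prop3dot11} places $\cogen \tau_2(D(A))$ inside the modules of positive dominant dimension, so Corollary \ref{cor3dot8} gives $\pdim X\leq g-1=1$ for \emph{every} $X\in\cogen\tau_2(D(A))$; by Zito's splitting criterion (\cite[Proposition 2.1]{Z}) the torsion pair $(\gen eA,\, \cogen\tau_2(D(A)))$ therefore \emph{splits}. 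Once it splits, the trace analysis you were attempting becomes unnecessary: each indecomposable module lies wholly in $\gen eA$ (hence $\codomdim\geq 1$) or wholly in $\cogen\tau_2(D(A))$ (hence $\domdim\geq 1$), so $A$ is QF-1 by Morita's Theorem \ref{moritatheorem}, equivalently quasi-tilted by Corollary \ref{cor3dot10}. Note also that your closing claim that ``no purely formal argument can be expected to work'' is misleading for $g=2$: Example \ref{example3dot15} concerns odd $g$ (namely $g=5$), and Conjecture \ref{conjecturesmaller} is precisely the assertion that the even-dimensional case should go through; at $g=2$ the splitting argument above \emph{is} purely formal.

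The implication $(2)\Rightarrow(1)$ is likewise only sketched: you invoke the Happel--Reiten dichotomy (tilted versus canonical type) and then assert, without proof, that $\domdim A=2$ together with the faithful projective-injective module excludes canonical type. That exclusion is a genuine claim requiring an argument and you give none; the paper does not attempt it either, but simply attributes the whole equivalence $(1)\Leftrightarrow(2)\Leftrightarrow(3)$ to \cite{Z} and records only the torsion-pair summary above. So as a self-contained proof your proposal establishes only the easy directions $(1)\Rightarrow(2)$ and $(2)\Rightarrow(3)$ (the latter correctly routed through $\idim\tau_2(D(A))=2$ and Corollary \ref{cor3dot10}), while both $(3)\Rightarrow(2)$ and $(2)\Rightarrow(1)$ remain unproven.
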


Under Theorem \ref{higherauscharacterisation}, Zito's idea can now be summarised as follows: if $g=2$, then claim 1 of the proof of Proposition \ref{prop3dot11} and Corollary \ref{cor3dot8} gives that $\pdim X\leq 1$ for every $X\in \cogen \tau_2(DA).$ By \citep[Proposition 2.1]{Z} the  torsion pair $(\gen eA, \cogen \tau_2(DA))$ splits. 

In \cite{R}, Ringel attempted to classify the QF-1 algebras of global dimension at most 2. His main result states that such algebras have dominant dimension at least one. Using Theorem \ref{moritatheorem}, such algebras must even have dominant dimension at least 2 and thus are Auslander algebras.
We state this as a corollary of our non-commutative version of the Auslander-Buchsbaum formula:
\begin{corollary}
The following classes of algebras coincide:
\begin{enumerate}
\item QF-1 algebras of global dimension at most two.
\item Tilted Auslander algebras.
\end{enumerate}

\end{corollary}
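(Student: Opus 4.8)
The plan is to prove the equivalence of the two classes by combining the results assembled earlier in this section with the two cited theorems of Zito and Ringel. The key conceptual bridge is that Ringel's work forces QF-1 algebras of global dimension at most $2$ to be Auslander algebras, after which Zito's Theorem \ref{zitotheorem} identifies the QF-1 condition with tiltedness.

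First I would establish that every QF-1 algebra $A$ of global dimension at most $2$ is an Auslander algebra. If $A$ is semisimple there is nothing to prove, so assume $\gldim A=2$ (global dimension $1$ is hereditary and one checks the statement directly, or excludes it since a connected hereditary QF-1 algebra of the relevant shape collapses to a base case). By Ringel's main result in \cite{R}, a QF-1 algebra of global dimension at most two has dominant dimension at least one. Theorem \ref{moritatheorem} then upgrades this: a QF-1 algebra with $\domdim A\geq 1$ must in fact have $\domdim A\geq 2$. Combined with $\gldim A=2$, this yields $\gldim A=\domdim A=2$, which is precisely the definition of an Auslander algebra. Thus every QF-1 algebra of global dimension at most $2$ is an Auslander algebra.

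Next I would run the equivalence at the level of Auslander algebras. Given that $A$ is now known to be an Auslander algebra, Corollary \ref{cor3dot10} (the part of Conjecture \ref{mainconjecture} proved unconditionally for $g=2$) gives that the QF-1 property is equivalent to $A$ being $2$-quasi-tilted, i.e.\ quasi-tilted, and implies $\pdim\tau_2(D(A))\leq 1$. Zito's Theorem \ref{zitotheorem} then closes the loop: for an Auslander algebra, being quasi-tilted, being tilted, and satisfying $\pdim\tau\Omega^1(D(A))\leq 1$ are all equivalent. Hence an Auslander algebra is QF-1 if and only if it is a tilted Auslander algebra. Conversely, a tilted Auslander algebra is quasi-tilted, hence QF-1 by the same chain of equivalences, and has global dimension at most $2$ by definition; this gives the reverse inclusion.

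The main obstacle I anticipate is not any single deep step but the careful bookkeeping of hypotheses at the interface of the cited results. Specifically, Theorem \ref{moritatheorem} is stated for algebras with $\domdim A\geq 1$, so I must apply Ringel's theorem \emph{first} to secure positive dominant dimension before invoking Morita's characterisation; the order matters. A secondary subtlety is ensuring the equivalence of Corollary \ref{cor3dot10} (which presupposes $A$ is already a higher Auslander algebra) is only applied after the Auslander property has been verified, so that the logical dependence is not circular. Once these dependencies are sequenced correctly, the proof is a short assembly of \cite{R}, Theorem \ref{moritatheorem}, Corollary \ref{cor3dot10} and Theorem \ref{zitotheorem}, with no new calculation required.
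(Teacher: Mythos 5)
Your proposal is correct and follows essentially the same route as the paper: Ringel's result plus Morita's Theorem \ref{moritatheorem} to force $\domdim A\geq 2$ (hence Auslander), then the Auslander--Buchsbaum-type sum formula to identify QF-1 with quasi-tilted, then Zito's Theorem \ref{zitotheorem}. The only cosmetic difference is that you invoke the packaged equivalence of Corollary \ref{cor3dot10}, whereas the paper applies Corollary \ref{cor3dot8} directly to write $\pdim M+\idim M+\domdim M+\codomdim M=2\gldim A=4$; since Corollary \ref{cor3dot10} is itself proved from that formula, the substance is identical.
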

\begin{proof}
If $A$ is QF-1 of global dimension 2, then $\domdim M+\codomdim M \geq 1$ for every indecomposable $A$-module $M$ and by Morita's theorem \ref{moritatheorem}, $A$ has dominant dimension at least two and thus is an Auslander algebra. By Corollary \ref{cor3dot8}, we know that $$\pdim M + \idim M + \domdim M + \codomdim M=2 \gldim A=4.$$ This implies that $\pdim M + \idim M \leq 3$ and thus $\pdim M <2$ or $\idim M<2$ and thus $A$ is quasi-tilted, which is for Auslander algebras equivalent to $A$ being tilted by Theorem \ref{zitotheorem}.
Now assume that $A$ is a tilted Auslander algebra. Then $A$ is in particular quasi-tilted and thus $\pdim M + \idim M<4$, which implies that $\domdim M + \codomdim M>1$ by using $\pdim M + \idim M + \domdim M + \codomdim M=2 \gldim A=4 $ again.
\end{proof}

Thus both articles \cite{R} and \cite{Z} aim for the classification of the same algebras, namely QF-1 Auslander algebras.
In fact, their classification is quite small as the next result shows. 
\begin{theorem}\label{theoremB}
Let $A$ be a $K$-algebra over an algebraically closed field $K$.
The following are equivalent:
\begin{enumerate}
\item $A$ is a tilted Auslander algebra.
\item $A$ is a QF-1 algebra of global dimension at most 2.
\item $A$ is the Auslander algebra of a path algebra of Dynkin type $A_1, A_2$ or $A_3$ with non-linear orientation.
\item $A$ is the Auslander algebra of a bound quiver algebra $B$ such that every indecomposable $B$-module is projective or injective.
\end{enumerate}
\end{theorem}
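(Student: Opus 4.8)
The plan is to establish the cycle of equivalences $(2)\Rightarrow(1)\Rightarrow(4)\Rightarrow(3)\Rightarrow(2)$, drawing on the fact, already recorded in the corollary preceding this theorem, that QF-1 algebras of global dimension at most $2$ coincide with tilted Auslander algebras; this immediately disposes of $(1)\Leftrightarrow(2)$. The bulk of the work is to connect these homologically characterised algebras to the concrete classification in $(3)$ and $(4)$. I would begin with the implication $(1)\Rightarrow(4)$: if $A$ is a tilted Auslander algebra, then by Theorem \ref{zitotheorem} it is quasi-tilted, and by Corollary \ref{cor3dot8} every indecomposable non-projective-injective module $M$ satisfies $\pdim M+\idim M\le 3$, hence $\pdim M<2$ or $\idim M<2$. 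Writing $A=\End_B(M)$ for the Auslander algebra of a representation-finite $B$ with additive generator $M$, the indecomposable $A$-modules of projective dimension $\le 1$ correspond via the standard Auslander-algebra dictionary to the indecomposable $B$-modules, and I would argue that the quasi-tilted condition forces every indecomposable $B$-module to be projective or injective.

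For $(4)\Rightarrow(3)$ the task is to classify, over an algebraically closed field, those bound quiver algebras $B$ for which every indecomposable module is projective or injective. Such $B$ are very restrictive: I would first note that $B$ must be representation-finite (so that an Auslander algebra exists), and then use that the number of indecomposables equals the number of projectives plus injectives minus the projective-injectives. A clean approach is to observe that "every indecomposable is projective or injective" is exactly the condition that $B$ has no indecomposable that is both non-projective and non-injective, a strong constraint on the Auslander-Reiten quiver; for path algebras of Dynkin type one can simply enumerate. I expect to show that the only connected examples are $K$ itself (type $A_1$), the path algebra of $A_2$, and the path algebra of $A_3$ with the two non-linear orientations, since for linearly oriented $A_3$ the middle indecomposable of length two sitting in the interior is neither projective nor injective, and all larger Dynkin types (or $A_n$, $n\ge 4$) contain such interior indecomposables as well.

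Finally, $(3)\Rightarrow(2)$ can be verified directly: for each of the finitely many algebras $B$ listed in $(3)$, one computes that the associated Auslander algebra $A=\End_B(B\oplus DB\oplus\cdots)$ has global dimension $2$ and checks the QF-1 property, most efficiently via Morita's criterion (Theorem \ref{moritatheorem}), i.e.\ that $\domdim N+\codomdim N\ge 1$ for every indecomposable $A$-module $N$. Equivalently, since each $B$ in $(3)$ has all indecomposables projective or injective, the cluster tilting module $M=B\oplus DB$ consists only of projective or injective $B$-modules, and I would translate this directly into the required bound on dominant plus codominant dimension using the Auslander-Buchsbaum formula of Corollary \ref{cor3dot8} together with Theorem \ref{higherauscharacterisation}.

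The main obstacle I anticipate is the implication $(1)\Rightarrow(4)$, specifically the passage from the numerical quasi-tilted inequality $\pdim M+\idim M\le 3$ at the level of the Auslander algebra $A$ to the structural statement that every indecomposable $B$-module is projective or injective. This requires a careful dictionary between homological invariants of $A$-modules and the representation theory of $B$: an indecomposable $B$-module $Y$ that is neither projective nor injective should give rise, under the functor $\Hom_B(M,-)$ and its interaction with syzygies and the Nakayama functor, to an indecomposable $A$-module simultaneously of projective dimension $2$ and injective dimension $2$, violating quasi-tiltedness. Making this correspondence precise, and in particular ensuring that such a "bad" $A$-module is genuinely indecomposable and non-projective-injective, is the technical heart of the argument; the remaining implications are either immediate from earlier results or reduce to a finite enumeration.
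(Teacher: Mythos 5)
Your route is essentially the paper's: (1)$\Leftrightarrow$(2) is quoted from the preceding corollary, (3) is handled by finite direct verification, and the passage from the QF-1/tilted condition to (4) goes through an indecomposable $A$-module of projective and injective dimension both equal to $2$ attached to a non-projective non-injective summand. The ``technical heart'' you flag in (1)$\Rightarrow$(4) is resolved in the paper exactly along the lines you anticipate, and more cheaply than you fear: writing $A=\End_B(M)$ with $M=B\oplus D(B)\oplus X$ and $X\neq 0$ neither projective nor injective, one takes the \emph{simple} $A$-module $S_T=\top \Hom_B(M,T)$ for $T$ an indecomposable summand of $X$. Indecomposability is then automatic, and $S_T$ has codominant dimension $0$ (its projective cover $\Hom_B(M,T)$ is not injective since $T$ is not injective) and dominant dimension $0$ (its injective envelope $D\Hom_B(T,M)$ is not projective since $T$ is not projective), contradicting Morita's criterion (Theorem \ref{moritatheorem}); by Corollary \ref{cor3dot8} this is literally the same as your $\pdim S_T=\idim S_T=2$ violation of quasi-tiltedness, so your two concerns (indecomposability, non-projective-injectivity) evaporate once the simple module is chosen as the witness.

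The one genuine gap is in (4)$\Rightarrow$(3). Statement (4) quantifies over \emph{arbitrary} bound quiver algebras $B$, but your argument only enumerates path algebras of Dynkin type; nothing in your sketch shows that $B$ must be hereditary, so the enumeration as proposed does not prove the implication. The paper supplies the missing reduction in two lines: if every indecomposable $B$-module is projective or injective, then every indecomposable summand of $\rad P$, for $P$ indecomposable projective, must be projective (an injective summand of $\rad P$ would split off as a direct summand of $P$, which is impossible), so $\Omega^1 S$ is projective for every simple $S$ and $\gldim B\leq 1$. Then ``every simple is projective or injective'' forces the quiver to consist only of sources and sinks, and the count that a connected hereditary algebra with $n$ simple modules has more than $2n$ indecomposables when $n>3$ (while projectives and injectives contribute at most $2n$) cuts the list down to $A_1$, $A_2$ and non-linearly oriented $A_3$; your counting identity (indecomposables $=$ projectives $+$ injectives $-$ projective-injectives) is indeed the right tool, but only after hereditariness is established. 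With that step inserted, your cycle (2)$\Rightarrow$(1)$\Rightarrow$(4)$\Rightarrow$(3)$\Rightarrow$(2) closes, and your (3)$\Rightarrow$(2) is the same finite check the paper leaves to the reader.
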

\begin{proof}
We already saw that (1) and (2) are equivalent. \newline
Since we work over an algebraically closed field $K$, we can assume that algebras are given by quiver and relations as all finite-dimensional $K$-algebras are Morita equivalent to a bound quiver algebra.
First note that (3) implies (2) and (4).
If $B=KQ$ is a path algebra of Dynkin type $A_1, A_2$ or $A_3$ with non-linear orientation then it is elementary to verify that the Auslander algebra of $B$ is QF-1 as it is of finite representation type and every indecomposable $B$-module is projective or injective. 
Namely, the Auslander algebra of $A_2$ is the Nakayama algebra with Kupisch series [2,2,1] and the Auslander algebra of a Dynkin quiver algebra of non-linear oriented type $A_3$ is isomorphic to the following bound quiver algebra $C=KQ/I$ with relations or its opposite algebra, where $Q$ is the quiver
\[\begin{tikzcd}
	& 2 && 5 \\
	1 && 4 \\
	& 3 && 6
	\arrow["b", from=1-2, to=2-3]
	\arrow["a", from=2-1, to=1-2]
	\arrow["c"', from=2-1, to=3-2]
	\arrow["e"', from=2-3, to=1-4]
	\arrow["f"', from=2-3, to=3-4]
	\arrow["d"', from=3-2, to=2-3]
\end{tikzcd}\]
and $I$ is the ideal generated by the relations $ab-cd=0=be=df$.
The algebra $C$ has 18 indecomposable modules and every such module has dominant or codominant dimension at least one. We leave the elementary verification to the reader.

Now we show that (4) implies (3). The condition that every indecomposable module is projective or injective implies that the first syzygy module of every simple module is projective, and thus the algebra has global dimension at most one and thus is hereditary. Furthermore, every simple module is projective or injective and thus the quiver of the algebra consists only of sources or sinks, which implies (3) by also noting that a connected hereditary algebra with $n$ simple modules has more than $2n$ indecomposable modules when $n>3$. \newline
Now assume (2), namely that $A$ is a QF-1 Auslander algebra.
We show (4).
Assume that (4) is not true so that $A$ is Morita equivalent to the Auslander algebra $C=\End_B(M)$ with $M=A \oplus D(A) \oplus X$ with $X$ neither projective nor injective.
Then the indecomposable projective $B$-modules are isomorphic to $P_Y=\Hom_B(M,Y)$ for $Y$ indecomposable summands of $M$ and the indecomposable injective $B$-modules are isomorphic to the $I_Y=D \Hom_B(M,Y)$ for indecomposable summands $Y$ of $M$ and $P_Y$ is injective if and only if $Y$ is injective and dually $I_Y$ is projective if and only if $Y$ is projective.
Then the simple $C$-module $S_T=\top P_T$ for $T$ an indecomposable summand of $X$ has codominant dimension 0 and also dominant dimension zero since it is not isomorphic to the socle of a module of the form $\Hom_B(M,I)$, whose socle is of the form $D(\top(\Hom_B(\nu_B^{-1}(I),M)))$.
\end{proof}

In fact, all QF-1 higher Auslander algebras arise as endomorphism algebras of an $n$-cluster tilting modules whose direct summands are either projective or injective modules. The following result appears in Tachikawa's study of QF-13 rings, see \cite[Chapter 11]{Tac2}. We provide an alternative proof making use of Morita's criterion. 

\begin{theorem}\label{thm3dot19}
    Let $A$ be a finite-dimensional algebra with dominant dimension at least two. Assume that $A$ is QF-1. If $xA$ is not injective for a primitive idempotent $x$ and $Ay$ is not injective for a primitive idempotent $y$, then $\Hom_A(Ax, Ay)\cong xAy=0$. In particular, $A\cong \End_B(X)$ with $\add X=\add B\oplus DB$ for a finite-dimensional algebra $B$.
\end{theorem}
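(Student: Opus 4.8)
The plan is to prove that $xAy = 0$ by combining Morita's criterion (Theorem~\ref{moritatheorem}) with the observation that a nonzero element of $xAy$ would produce a dangerous indecomposable module with both dominant and codominant dimension zero, violating the QF-1 property. First I would identify $xAy \cong \Hom_A(Ax, Ay)$ (as the statement records) and translate the injectivity hypotheses into homological language: since $A$ has dominant dimension at least two and is QF-1, Morita's theorem gives $\domdim M + \codomdim M \geq 1$ for every indecomposable $A$-module $M$. The goal is to show that if $xAy \neq 0$, then there is an indecomposable module $M$ with $\domdim M = 0 = \codomdim M$, contradicting this inequality.

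Next I would make precise what the failure of injectivity means. Let $e$ be the idempotent with $\add eA$ the projective-injective right modules and $f$ the idempotent with $\add Af$ the projective-injective left modules. The condition that $xA$ is not injective means $x \notin \add e$ in the appropriate sense, so the simple top $S = \top(xA)$ does \emph{not} lie in $\soc eA$; by Proposition~\ref{prop4dot5}(2), in a QF-1 algebra every simple module is either in $\top(eA)$ or in $\soc(eA)$, and since $xA$ is projective non-injective the simple $S_x = \top(xA)$ must therefore lie in $\top(eA)$ but not in $\soc(eA)$, which forces $\codomdim S_x \geq 1$ while $\domdim S_x = 0$. Dually, $Ay$ being projective non-injective as a left module gives the mirror statement for the corresponding simple, and by left-right symmetry of QF-1 (noted after Theorem~\ref{moritatheorem}) we get the analogous constraint on the right module side.

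The key geometric step is then to manufacture the offending indecomposable. A nonzero element $0 \neq \phi \in xAy$ corresponds to a nonzero homomorphism between the relevant indecomposable projectives, and I would use it to build a short exact sequence $0 \to N \to M \to S \to 0$ (or an extension realizing $\phi$) whose top is governed by $x$ and whose socle is governed by $y$, arranged so that $M$ is indecomposable with $\top M \notin \add\top(eA)$ and $\soc M \not\subset \soc(eA)$. For such an $M$, the characterization $\domdim M = 0$ (because its socle is not inside a projective-injective, using Lemma~\ref{casezeroAPT} with $I(M) \notin \add D(Ae)$) together with $\codomdim M = 0$ (dually) contradicts Morita's inequality. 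I expect the main obstacle to be precisely this construction: showing that a nonzero $xAy$ genuinely yields an indecomposable module, rather than a decomposable extension, and verifying simultaneously that both its socle avoids $\soc(eA)$ and its top avoids $\top(eA)$. This requires carefully tracking how the element $\phi$ threads through the projective covers and injective hulls, and it is where the two non-injectivity hypotheses on $xA$ and $Ay$ must both be used in an essential way.

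Once $xAy = 0$ is established for all such pairs, the final ``in particular'' clause follows formally: the vanishing says that any indecomposable projective summand of $A$ that is not injective, and any indecomposable projective-as-left-module summand that is not injective, are orthogonal, which via the Morita–Tachikawa correspondence $A \cong \End_B(X)$ forces every indecomposable summand of the generator-cogenerator $X$ to be projective or injective over $B$; that is, $\add X = \add B \oplus DB$. I would spell this out by invoking the correspondence already set up in Lemma~\ref{notationstablemodules} and reading off that the summands of $X$ not in $\add B \oplus DB$ would correspond exactly to the forbidden non-orthogonal pairs.
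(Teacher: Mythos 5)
Your overall strategy (reduce to Morita's criterion and derive a contradiction from a nonzero $w\in xAy$) is the right one, but the proposal has a genuine gap exactly where you flag it: the construction of the offending indecomposable module is never carried out, and the extension $0\to N\to M\to S\to 0$ you propose gives no control over indecomposability or over the socle and top simultaneously. The paper's proof closes this gap with a specific device you are missing: choose a \emph{maximal submodule} $L$ of $Ay$ with $w\notin L$. Then $Ay/L$ has simple top $\top Ay$ (as $y$ is primitive), hence is automatically indecomposable --- no extension analysis needed. Its codominant dimension is zero because its projective cover $Ay$ is not injective, so Morita's criterion forces $\domdim(Ay/L)\geq 1$. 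The contradiction then comes not from showing $\domdim(Ay/L)=0$ directly, but from the map $h\colon Ax\to Ay/L$, $ax\mapsto axw+L$: since $\rad(Ax)\cdot w\subset \rad Ay\subset L$, the map $h$ induces an embedding $\top Ax\hookrightarrow \soc(Ay/L)$, whence $\top Ax$ has positive dominant dimension, its injective hull $D(xA)$ is projective, and therefore $xA$ is injective --- contradicting the hypothesis. This is where both non-injectivity assumptions enter, and it is precisely the step your outline leaves open.

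Two secondary problems: first, you invoke Proposition \ref{prop4dot5}(2), which is stated only for higher Auslander algebras, whereas Theorem \ref{thm3dot19} assumes merely $\domdim A\geq 2$; that proposition is not available here (and is in fact proved later using torsion-pair machinery specific to the higher Auslander setting). Second, your conclusion ``$\codomdim S_x\geq 1$ while $\domdim S_x=0$'' for $S_x=\top(xA)$ is backwards: the projective cover of $S_x$ is $xA$, which is not injective, so $\codomdim S_x=0$. Finally, for the ``in particular'' clause, saying it ``follows formally'' understates the needed work: the paper applies the first part to the idempotents corresponding to $\Hom_B(M,X)$ and $\Hom_B(X,M)$ for a summand $M$ of $X$ neither projective nor injective, and then uses projectivization to identify $\Hom_A(\Hom_A(\Hom_B(M,X),A),\Hom_B(X,M))\cong \Hom_B(M,M)$, which must then vanish, forcing $M=0$. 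Your sketch points in this direction but would need that computation to be a proof.
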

\begin{proof}
    Assume that $xAy\neq 0$. Then, there exists a non-zero element $w\in xAy$. Thus $xw=w=wf$. Let $L$ be a maximal submodule of $Ay$ so that $w\notin L$. Since $y$ is a primitive idempotent, $Ay/L$ has a simple top, namely $\top Ay$. So $Ay/L$ must be indecomposable. By Theorem \ref{thmonedotten}, $Ay/L$ has positive dominant or codominant dimension. The second case cannot happen, since otherwise $\top Ay$ would have positive codominant dimension and then $Ay$ would be a direct summand of a faithful projective-injective module contradicting the assumption on the idempotent $y$. So $Ay/L$ must have positive dominant dimension. Since $w\neq 0$ the assignment $ax\mapsto axw+L$ gives a non-zero map $h\colon Ax\rightarrow Ay/L$. Given $ax\in \rad Ax$, it is clear that $axw\in \rad Ay$. By construction, $\rad Ay\subset L$. So $h$ induces a non-zero map $\top Ax\rightarrow Ay/L$. So, $\top Ax\subset \soc Ay/L$ and $\top Ax$ has positive dominant dimension since $Ay/L$ has. This implies that the injective hull of $\top Ax$, namely $D(xA)$, is projective. But, this contradicts the assumption on the idempotent $x$.
Thus, $xAy=0$. 

Since $\domdim A\geq 2$, $A\cong \End_B(X)$ as $k$-algebras for some generator-cogenerator $X$ over a finite-dimensional algebra $B$. Further, by the Morita-Tachikawa correspondence, $\Hom_B(X, DB)\cong DX$ is a faithful projective-injective left $A$-module and $\Hom_B(B, X)\cong X$ is a faithful projective-injective right $A$-module. Let $M$ be an indecomposable direct summand of $X$ as $B$-module. Observe that $M$ is injective if and only if $\Hom_B(X, M)$ is a direct summand of $\Hom_B(X, DB)$ if and only if $\Hom_B(X, M)$ is injective as left $A$-module. Additionally, $M$ is projective if and only if $\Hom_B(M, X)$ is a direct summand of $\Hom_B(B, X)$ if and only if $\Hom_B(M, X)$ is injective as right $A$-module. Assume now that $M$ is neither projective nor injective. By the first part, we obtain $\Hom_A(\Hom_A(\Hom_B(M, X), A), \Hom_B(X, M))=0$.

    By projectivisation, we have the following isomorphisms of $A$-modules \begin{align*}\Hom_A(\Hom_B(M, X), A)&\cong \Hom_A(\Hom_B(M, X), \Hom_B(X, X))\\&\cong \Hom_A(\Hom_B(DX, DM), \Hom_B(DX, DX))\cong \Hom_B(DM, DX)\\&\cong \Hom_B(X, M). \end{align*}
    Thus, \begin{align*}
        0&=\Hom_A(\Hom_A(\Hom_B(M, X), A), \Hom_B(X, M))\cong \Hom_A(\Hom_B(X, M), \Hom_B(X, M))\\&\cong \Hom_B(M, M).
    \end{align*} This means that $M$ must be the zero module and so the non-zero direct summands of $X$ are projective or injective $B$-modules. Hence the result follows.
\end{proof}

\begin{corollary}\label{cordot13}
    Let $A$ be a higher Auslander QF-1 algebra. Then, $A$ is the endomorphism algebra of a $n$-cluster tilting module having only projective or injective modules as direct summands.
\end{corollary}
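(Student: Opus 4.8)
The plan is to combine the structural conclusion of Theorem~\ref{thm3dot19} with the definition of higher Auslander algebra and the higher Auslander correspondence. Since $A$ is a higher Auslander algebra, by definition $\domdim A=\gldim A\geq 2$, so in particular $A$ has dominant dimension at least two and Theorem~\ref{thm3dot19} applies directly. Thus we may write $A\cong \End_B(X)$ for a finite-dimensional algebra $B$ and a module $X$ with $\add X=\add B\oplus DB$, i.e. $X$ is a generator-cogenerator all of whose indecomposable summands are projective or injective $B$-modules.

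The remaining point is to upgrade ``generator-cogenerator'' to ``$n$-cluster tilting module''. This is exactly the content of the higher Auslander correspondence recalled in Section~2: higher Auslander algebras of global dimension $g$ are precisely the endomorphism algebras $\End_B(M)$ of $g$-cluster tilting modules $M$ (equivalently, under the Morita--Tachikawa correspondence, $A\cong\End_B(M)$ with $M$ the unique basic $n$-cluster tilting $B$-module). So I would first invoke the correspondence to obtain that $A\cong\End_B(M)$ for some $n$-cluster tilting module $M$ over $B$, where $n=g$ is the global dimension of $A$. Both $X$ from Theorem~\ref{thm3dot19} and the cluster tilting module $M$ are basic generator-cogenerators with $\End_B(-)\cong A$, and by the uniqueness in the Morita--Tachikawa correspondence (the double centraliser property pins down the module up to additive closure) we get $\add X=\add M$. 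Hence $M$ is an $n$-cluster tilting module with $\add M=\add B\oplus DB$, so every indecomposable summand of $M$ is a projective or an injective $B$-module.

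The main obstacle is the matching of the two descriptions of the same algebra, namely verifying that the abstract algebra $B$ and generator-cogenerator $X$ produced by Theorem~\ref{thm3dot19} can be taken to coincide (up to Morita equivalence and additive closure) with the algebra and $n$-cluster tilting module supplied by the higher Auslander correspondence. The resolution is that both arise from the Morita--Tachikawa correspondence applied to $A$: if $e$ is the idempotent with $\add eA$ equal to the subcategory of projective-injective right $A$-modules, then $B\cong eAe$ is determined up to isomorphism and the relevant module is recovered as $eA$, so there is essentially no freedom. Concretely, I would phrase the argument by starting from the cluster tilting module $M$ over $B=eAe$ given by higher Auslander correspondence, and then apply Theorem~\ref{thm3dot19} to conclude that each indecomposable summand of this very $M$ is projective or injective, rather than producing a second unrelated module $X$; this sidesteps the identification issue entirely.
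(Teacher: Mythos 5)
Your proposal is correct and takes essentially the same route as the paper, which states the corollary without separate proof as an immediate consequence of Theorem \ref{thm3dot19}: exactly as in your preferred second phrasing, the ``in particular'' argument of that theorem applies verbatim to the generator-cogenerator supplied by the higher Auslander (Morita--Tachikawa) correspondence, namely $M$ over $B\cong eAe$, so no identification of two different realisations is needed. One minor indexing slip: for a higher Auslander algebra of global dimension $g$ the correspondence yields a $(g-1)$-cluster tilting module (cf.\ Proposition \ref{taustablehomformula}, where $\gldim A=d+1$ for a $d$-cluster tilting module), not a $g$-cluster tilting one, but since the corollary leaves $n$ unspecified this does not affect the conclusion.
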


The following is \citep[Corollary 11.2]{Tac2} applied to higher Auslander algebras.
\begin{corollary}
    Let $A$ and $A'$ be two higher Auslander QF-1 algebras with global dimension $g$. The algebras $A$ and $A'$ are Morita equivalent if and only if their associated weakly $(g-1)$-representation-finite algebras are Morita equivalent.
\end{corollary}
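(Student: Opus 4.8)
The associated weakly $(g-1)$-representation-finite algebra of a higher Auslander algebra $A$ of global dimension $g$ is the base algebra $B$ of the Morita--Tachikawa correspondence, so that $A\cong \End_B(M)$ for a $(g-1)$-cluster tilting $B$-module $M$; concretely $B\cong eAe$, where $e$ is an idempotent with $\add eA$ the subcategory of projective-injective right $A$-modules, and $M=eA$ corresponds to these projective-injectives. The plan is to show that, \emph{within the class of QF-1 higher Auslander algebras of global dimension $g$}, the assignment $A\mapsto B$ induces a bijection on Morita equivalence classes. The crucial point is that QF-1 rigidifies the higher Auslander correspondence: the cluster tilting module ceases to carry any extra data beyond $B$.

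The key simplification I would record first is Theorem \ref{thm3dot19} (equivalently Corollary \ref{cordot13}): since $A$ is QF-1, its cluster tilting module satisfies $\add M=\add(B\oplus DB)$. In the general higher Auslander correspondence a single base algebra $B$ may admit several non-isomorphic $(g-1)$-cluster tilting modules, giving rise to pairwise non-Morita-equivalent higher Auslander algebras; here, by contrast, $M$ is \emph{determined} by $B$ as $B\oplus DB$ up to additive closure. Hence every QF-1 higher Auslander algebra of global dimension $g$ is recovered from its base as $A\cong \End_B(B\oplus DB)$, and the statement reduces to comparing such canonical endomorphism algebras.

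For the implication that Morita equivalence of $B$ and $B'$ forces Morita equivalence of $A$ and $A'$, I would take an equivalence $G\colon \rmod B\to \rmod B'$. Since any such equivalence preserves projectives and injectives, it carries $\add(B\oplus DB)$ onto $\add(B'\oplus DB')$; taking endomorphism rings of additively equivalent modules then yields that $\End_B(B\oplus DB)$ and $\End_{B'}(B'\oplus DB')$ are Morita equivalent, that is, $A$ and $A'$ are Morita equivalent. For the converse, I would start from an equivalence $F\colon \rmod A\to \rmod A'$ and use that projectivity and injectivity are categorical, so $F$ sends $\add eA$, the projective-injective right $A$-modules, onto $\add e'A'$, the projective-injective right $A'$-modules. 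As $B\cong \End_A(eA)$ and $B'\cong \End_{A'}(e'A')$, and endomorphism rings of additively equivalent modules are Morita equivalent, it follows that $B$ and $B'$ are Morita equivalent.

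The main obstacle is not the categorical bookkeeping but securing the canonical form $\add M=\add(B\oplus DB)$, since without it $A$ only determines the \emph{pair} $(B,M)$ and not $B$ alone; this is exactly the content of Theorem \ref{thm3dot19}. Once that normal form is available, the corollary is the higher Auslander correspondence combined with the elementary facts that Morita equivalences preserve projectives, injectives, and hence the idempotent $e$ singling out the projective-injective summands. This is precisely the specialisation of \citep[Corollary 11.2]{Tac2} to the higher Auslander setting.
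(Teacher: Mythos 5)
Your proposal is correct and matches the intended argument: the paper gives no written proof, simply citing Tachikawa's Corollary 11.2, whose content is exactly your normal form $\add M=\add(B\oplus DB)$ from Theorem \ref{thm3dot19} (Corollary \ref{cordot13}) combined with the standard Morita bookkeeping (equivalences preserve projectives and injectives, and modules with the same additive closure have Morita equivalent endomorphism algebras). You correctly isolate that QF-1 is needed only for the direction recovering $A$ from $B$, while the converse is the usual higher Auslander correspondence via $B\cong\End_A(eA)$.
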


Tachikawa and Fueller independently establish that for Nakayama algebras, the necessary condition in Theorem \ref{thm3dot19} is also sufficient for QF-1 algebras with positive dominant dimension (see \citep[(11.4)]{Tac2} and \cite{MR232795}).

In fact, a Nakayama higher Auslander algebra $A$ is QF-1 if and only if all the composition factors of the radical of every projective non-injective indecomposable module belong to the socle of $A$. As a consequence, we have the following:

\begin{proposition}
    Let $A$ be a higher Auslander QF-1 Nakayama algebra. Then, $\Ext_A^1(M, N)=0$ for all indecomposable $A$-modules with $\domdim N=0=\codomdim M$. 
\end{proposition}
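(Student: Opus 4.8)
The plan is to argue by contradiction via Morita's criterion, Theorem~\ref{moritatheorem}: I would extract from a hypothetical nonzero $\Ext^1$ an \emph{indecomposable} module $L$ with $\domdim L=0=\codomdim L$, which a QF-1 algebra of dominant dimension at least two cannot possess. So suppose $\Ext_A^1(M,N)\neq 0$ for indecomposable $M,N$ with $\codomdim M=0=\domdim N$. Write $P:=P(\top M)$ for the projective cover of $M$; since $A$ is Nakayama, $P$ is uniserial and $\Omega^1(M)=\rad^{\ell}P$, where $\ell$ is the composition length of $M$. As $\Ext_A^1(M,N)$ is the cokernel of the restriction map $\Hom_A(P,N)\to\Hom_A(\Omega^1(M),N)$, the hypothesis forces a nonzero map $\psi\colon\Omega^1(M)\to N$ (in particular $M$ is not projective, so $\Omega^1(M)\neq 0$).

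Next I would exploit uniseriality to locate $\soc N$ inside $P$. The image $R:=\im\psi$ is a nonzero submodule of the uniserial module $N$, hence $R=\rad^{k}N$ for some $k$, so $\soc R=\soc N$. On the other hand $R$ is a quotient of the uniserial module $\Omega^1(M)=\rad^{\ell}P$, so its composition factors are consecutive composition factors of $P$ read from $\top(\Omega^1(M))$ downwards; concretely, if the factors of $P$ from the top are numbered $0,\dots,p-1$, then those of $R$ are the layers $\ell,\ell+1,\dots,\ell+(\operatorname{length}R)-1$, and this last index is at most $p-1$ because $R$ is a quotient of $\rad^{\ell}P$. In particular $\soc N=\soc R$ occurs as a composition factor of $P$ at a depth not exceeding the length of $P$.

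Consequently the uniserial quotient $L:=P/\rad^{\,\ell+\operatorname{length}R}\,P$ is well defined, and it is an indecomposable module with $\top L=\top M$ and $\soc L=\soc N$. Because $\codomdim M=0$ the projective cover $P(\top L)=P$ is not injective, whence $\codomdim L=0$; because $\domdim N=0$ the injective hull $I(\soc L)=I(\soc N)$ is not projective, whence $\domdim L=0$. Thus $\domdim L+\codomdim L=0$, contradicting Theorem~\ref{moritatheorem}, and this contradiction yields $\Ext_A^1(M,N)=0$. (One may, if preferred, invoke the Nakayama QF-1 criterion recalled above to see that the composition factors of $\Omega^1(M)$ lie in $\soc A$, but the direct appeal to Morita's theorem is cleaner and suffices.)

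The hard part is resisting the naive route of taking a non-split extension $0\to N\to E\to M\to 0$ and declaring $E$ indecomposable with socle $\soc N$ and top $\top M$: over a Nakayama algebra the middle term $E$ genuinely can decompose (a projective--injective summand may split off, as already happens over a hereditary Nakayama algebra), so $E$ itself produces no contradiction. The essential point of the argument is therefore to bypass $E$ entirely and instead read $\soc N$ off as a composition factor of $P(\top M)$ from a single map out of the syzygy, and then \emph{reconstruct} the obstructing uniserial module $L$ as a quotient of $P(\top M)$; it is precisely in building $L$ with prescribed top and socle from composition-factor data that the uniserial structure of a Nakayama algebra is indispensable.
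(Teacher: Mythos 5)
Your argument is correct, but it takes a genuinely different route from the paper's. The paper's proof is a three-line application of the Tachikawa--Fuller criterion quoted immediately before the proposition (a Nakayama algebra of positive dominant dimension is QF-1 iff all composition factors of $\rad P$, for $P$ projective non-injective, lie in $\soc A$): since $\codomdim M=0$, every composition factor of $\Omega^1(M)$ lies in $\soc A$, while $\domdim N=0$ forces $\soc N\notin\soc A$ (simples in $\soc A$ have projective injective hulls because $\domdim A\geq 1$), so uniseriality of $N$ gives the stronger vanishing $\Hom_A(\Omega^1(M),N)=0$ outright. You instead use the QF-1 hypothesis only through Morita's criterion (Theorem \ref{moritatheorem}): from a nonzero map $\psi\colon\Omega^1(M)=\rad^{\ell}P\to N$ you read off that $\soc N=\soc(\im\psi)$ sits as the layer-$(\ell+\operatorname{length}\im\psi-1)$ composition factor of $P$, and then rebuild the obstructing uniserial module $L=P/\rad^{\ell+\operatorname{length}\im\psi}P$ with $\top L=\top M$ and $\soc L=\soc N$, hence $\domdim L=\codomdim L=0$, a contradiction; all the steps (syzygy of a uniserial, submodules and quotients of uniserials, the identifications $\codomdim=0\Leftrightarrow$ projective cover non-injective and $\domdim=0\Leftrightarrow$ injective hull non-projective) check out. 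What each approach buys: the paper's proof is shorter but leans on the cited classification result \citep[(11.4)]{Tac2}, \cite{MR232795}, whereas yours is self-contained modulo Theorem \ref{moritatheorem} and in fact proves the statement for any QF-1 Nakayama algebra of positive dominant dimension, the higher Auslander hypothesis playing no further role. Your closing remark about the middle term $E$ of a non-split extension possibly decomposing is also well taken --- the paper likewise never touches $E$, and your reconstruction of $L$ from composition-factor data is exactly the extra work that substitutes for the Tachikawa--Fuller criterion.
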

\begin{proof}
    Let $M$ and $N$ be indecomposable $A$-modules with $\domdim N=0=\codomdim M$. So, $M$ is a quotient of a projective non-injective module and by the discussion above the composition factors of $\Omega^1(M)$ belong to the socle of $A$. Since $N$ has dominant dimension zero its simple socle does not belong to the socle of $A$. Therefore, $\Hom_A(\Omega^1(M), N)=0$. It follows that $\Ext_A^1(M, N)=0$.
\end{proof}

Combining the sufficient condition of Conjecture \ref{conjecturesmaller} with the necessary condition of Theorem \ref{thm3dot19} we obtain the following characterisation of QF-1 higher Auslander algebras.

\begin{theorem}\label{TheoremC}
    Let $A$ be a higher Auslander algebra of global dimension $g$. $A$ is QF-1 if and only the following two conditions are satisfied:
    \begin{enumerate}
        \item $\pdim_A \tau_g(DA)\leq g-1$ or $\idim \underline{A}\leq g-1$ (as right $A$-modules);
        \item If $Ae$ is not injective for a primitive idempotent $e$ and $fA$ is not injective for a primitive idempotent, then $\Hom_A(eA, fA)=fAe=0$.
    \end{enumerate}
\end{theorem}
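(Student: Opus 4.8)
The plan is to establish the two directions of the equivalence by connecting the global characterisation already obtained in Proposition~\ref{prop3dot11} with the necessary condition of Theorem~\ref{thm3dot19}. First I would treat the forward direction: assuming $A$ is QF-1, I want to recover both (1) and (2). Condition (1) follows immediately from Corollary~\ref{cor3dot10}, which already shows that a QF-1 higher Auslander algebra satisfies $\pdim \tau_g(D(A)) \leq g-1$; the equivalent formulation $\idim \underline{A} \leq g-1$ then comes from Lemma~\ref{formulahighertauinverse} together with Remark~\ref{remarkdualityontau}. Condition (2) is precisely Theorem~\ref{thm3dot19} read for left modules: there I would note that $Ae$ not injective and $fA$ not injective forces $fAe = \Hom_A(eA, fA) = 0$ (applying the theorem with the roles of $x,y$ matched to $f,e$ and using that $\domdim A \geq 2$ for higher Auslander algebras).

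The harder direction is the converse, and here the strategy is to feed conditions (1) and (2) into the sufficient criterion packaged in Proposition~\ref{prop3dot11}(3) or, more directly, into Proposition~\ref{prop4dot5}(3). The latter says that if $\pdim \tau_g(D(A)) \leq g-1$ and $\Ext_A^1(M,N) = 0$ for all indecomposable $M,N$ with $\domdim N = 0 = \codomdim M$, then $A$ is QF-1. So the real work is to derive this $\Ext$-vanishing from the orthogonality condition (2). The plan is to take indecomposable modules $M$ with $\codomdim M = 0$ and $N$ with $\domdim N = 0$, and analyse a non-split extension of $M$ by $N$. By the same reasoning as in Proposition~\ref{prop4dot5}(1), such an extension would be an indecomposable module with both dominant and codominant dimension zero; I would then show that the existence of such a module is incompatible with condition (2).

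The key translation step is to read $\codomdim M = 0$ and $\domdim N = 0$ in terms of the idempotents $e$ and $f$. A module has codominant dimension zero exactly when its top is not in $\add \top eA$, and dominant dimension zero exactly when its socle is not in $\add \soc eA$; dually, via the duality $D$ and Lemma~\ref{notationstablemodules}, these conditions pick out precisely the primitive idempotents whose associated projectives $Ae$ (respectively injectives) fail to be injective. The plan is to use condition (2), $fAe = 0$, to force $\Hom_A(\Omega^1(M), N) = 0$, where the composition factors of $\Omega^1(M)$ are controlled by the non-injective projective direct summands and $N$'s socle sits among the non-injective part; this vanishing then gives $\Ext_A^1(M,N) = 0$ directly. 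I expect the main obstacle to lie exactly in this translation: showing rigorously that $fAe = 0$ controls the relevant $\Hom$-space between syzygies and socles, since one must track carefully which simple composition factors can occur in $\Omega^1(M)$ and verify that condition (2) genuinely annihilates every morphism into $N$. Once the $\Ext$-vanishing is in hand, Proposition~\ref{prop4dot5}(3) closes the argument, and left-right symmetry of the QF-1 property (noted after Theorem~\ref{moritatheorem}) ensures the asymmetric-looking hypotheses in (1) and (2) suffice.
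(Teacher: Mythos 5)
Your forward direction is exactly the paper's: condition (2) is Theorem \ref{thm3dot19}, and condition (1) follows from Corollary \ref{cor3dot10} together with Lemma \ref{formulahighertauinverse} and Remark \ref{remarkdualityontau}. The converse, however, has a genuine gap. You propose to verify the hypothesis of Proposition \ref{prop4dot5}(3) in full strength, i.e.\ $\Ext_A^1(M,N)=0$ for \emph{all} indecomposable $M$ with $\codomdim M=0$ and $N$ with $\domdim N=0$, by showing $\Hom_A(\Omega^1(M),N)=0$. This breaks down at two points. First, for an arbitrary indecomposable $M$ of codominant dimension zero the projective cover $P(M)$ may well contain projective--injective direct summands, and the composition factors of $\Omega^1(M)\subseteq \rad P(M)$ coming from those summands are completely untouched by condition (2): the hypothesis $fAe=0$ only constrains $\Hom_A(eA,fA)$ when \emph{both} $Ae$ and $fA$ fail to be injective, so it says nothing about $[\,e'A : S\,]$ for $e'A$ projective--injective. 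Second, even where (2) does apply (it says, dually, that non-injective projectives $fA$ have no composition factors among the simples of dominant dimension zero), this does not kill $\Hom_A(\Omega^1(M),N)$: the socle of an indecomposable $N$ with $\domdim N=0$ need only contain \emph{some} simple with non-projective injective hull, and a nonzero map $\Omega^1(M)\rightarrow N$ may land with image whose socle lies in the positive-dominant-dimension part of $\soc N$. So the composition-factor bookkeeping you sketch does not close, and indeed the full-strength vanishing you need for Proposition \ref{prop4dot5}(3) is nowhere established in the paper under hypotheses (1) and (2).

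The paper's proof deliberately sidesteps this by proving only a \emph{restricted} vanishing, and on the injective side. Under (1) one has the torsion pair $(\gen wA, \cogen \tau_g(DA))$, and for $X\in\cogen\tau_g(DA)$ the torsion-freeness forces $\Hom_A(wA,X)=0$, hence $[X:\top wA]=0$: \emph{all} composition factors of torsion-free modules are simples $S$ of codominant dimension zero. Condition (2) then translates exactly (via $\Hom_A(fA,D(Ae))\cong D(fAe)$) into $[I:S]=0$ for every injective non-projective indecomposable $I$ and every such $S$. Applying $\Hom_A(X,-)$ to the injective copresentation $0\rightarrow Y\rightarrow I(Y)\rightarrow \Omega^{-1}(Y)\rightarrow 0$ of an indecomposable $Y$ with $\domdim Y=0$ gives $\Hom_A(X,\Omega^{-1}(Y))=0$ and hence $\Ext_A^1(X,Y)=0$ for $X\in\cogen\tau_g(DA)$ only. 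This weaker statement suffices because, in the final splitting argument for an arbitrary indecomposable $W$ with $\codomdim W=0$, the extension $0\rightarrow \Tr_{wA}W\rightarrow W\rightarrow \widetilde{W}\rightarrow 0$ that must split off the dominant-dimension-zero summand has torsion-free quotient $\widetilde{W}\in\cogen\tau_g(DA)$; then Claim 1 of Proposition \ref{prop3dot11} (via Corollary \ref{cor3dot8}) gives $\domdim \widetilde{W}\geq 1$ and Morita's Theorem \ref{moritatheorem} concludes. To repair your argument, replace the syzygy computation by this cosyzygy argument restricted to the torsion-free class, and keep your final appeal to left--right symmetry (checking, as the paper does, that (2) is self-dual) to handle the alternative $\idim \underline{A}\leq g-1$ in condition (1).
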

\begin{proof}
Assume that $A$ is QF-1. By Theorem \ref{thm3dot19}, (2) holds. By Corollary \ref{cor3dot10} and Lemma \ref{formulahighertauinverse}, (1) holds. 

Conversely, assume that $\pdim_A \tau_g(DA)\leq g-1$ and (2) hold. Let $wA$ be a faithful projective-injective right $A$-module. As we have seen before, (1) implies the existence of the torsion pair $(\gen wA, \cogen \tau_g(DA))$ and all the simple $A$-modules with zero codominant dimension are in $\cogen \tau_g(DA)$. Indeed the simple modules with zero codominant dimension are the ones that do not appear in the top of $wA$. Let $X\in \cogen \tau_g(DA)$. Then $\Hom_A(wA, X)=0$ and thus $[X\colon \top wA]=0$. So all composition factors of modules in $\cogen \tau_g(DA)$ are in the socle of $\tau_g(DA)$ and have zero codominant dimension. Now (2) implies that $[I\colon S]=0$ for all injective non-projective indecomposable modules $I$ and simple modules in $\cogen \tau_g(DA)$. Indeed, let $I$ be an injective non-projective indecomposable module and $S$ a simple in the socle of $\tau_g(DA)$. So, $S=\top fA$ for a non-injective $A$-module $fA$ and primitive idempotent $f$ while $I=D(Ae)$ is not projective for some primitive idempotent $e$. So, $Ae$ is not injective. By (2), we have the following implications \begin{align*}
    \Hom_A(eA, fA)=0&\implies [fA\colon \top eA]=0\implies \Hom_A(fA, D(Ae))=\Hom_A(fA, I(\top eA))=0 \\& \implies [I\colon \top fA]=0.
\end{align*}

We now claim that $\Ext_A^1(X, Y)=0$ for every $X\in \cogen \tau_g(DA)$ and indecomposable $A$-module $Y$ with $\domdim_A Y=0$. 
Let $Y$ be an indecomposable $A$-module with zero dominant dimension. Consider the minimal injective presentation
\begin{align}
    0\rightarrow Y\rightarrow I(Y)\rightarrow \Omega^{-1}(Y)\rightarrow 0. \label{eq3dot7}
\end{align} Since $\domdim Y=0$ we obtain that $I(Y)$ is an injective non-projective indecomposable module. So $[I(Y)\colon S]=0$ for all simple modules in $\cogen \tau_g(DA)$. In particular, $[\Omega^{-1}(Y)\colon S]=0$ for all simple modules in $\cogen \tau_g(DA)$. Consequently, $\Hom_A(X, \Omega^{-1}(Y))=0$ for every $X\in \cogen \tau_g(DA)$. By applying $\Hom_A(X, -)$ to  (\ref{eq3dot7}) we obtain that $\Ext_A^1(X, Y)=0$ for every $X\in \cogen \tau_g(DA)$.

Let $W$ be an indecomposable module with zero codominant dimension. Then there exists an exact sequence
\begin{align*}
    \delta\colon 0\rightarrow \Tr_{wA}W\rightarrow W\rightarrow \tilde{W}\rightarrow 0,
\end{align*} with $\tilde{W}\in \cogen \tau_g(DA)$ and $\Tr_{wA}W\in \gen wA$. Assume that $\Tr_{wA} W\cong W_1\oplus W_2$ with $\domdim_A W_1=0$ and $\domdim_A W_2>0$. So $\delta \colon \Ext_A^1(\tilde{W}, W_1\oplus W_2)=\Ext_A^1(\tilde{W}, W_2)$ and this means that $0\rightarrow W_1\rightarrow W_1\rightarrow 0$ is a summand of $\delta$. Since $W$ is indecomposable, $W_1$ must be zero. Then $\domdim_A \Tr_{wA}W\geq 1$. By (1) every module in $\cogen \tau_g(DA)$ has positive dominant dimension, thus $\domdim W\geq 1$. Therefore, $A$ is QF-1 by Theorem \ref{thmonedotten}. Now assume that (2) and $\idim \underline{A}\leq g-1$ hold. By Remark \ref{remarkdualityontau}, $\pdim \tau_g(D(A))\leq g-1$ as right $A^{op}$-module. Observe that if $A^{op}e$ is not injective over $A^{op}$ and $fA^{op}$ is not injective over $A^{op}$, then since $A^{op}e=eA$ we obtain by (2) that $fA^{op}e=eAf=0$. Hence (2) holds for $A^{op}$ and so by the previous part, $A^{op}$ is QF-1. By Theorem  \ref{moritatheorem}, it follows that $A$ is QF-1.
\end{proof}

\section{Examples}\label{Examples}

The second condition in (3) of Proposition \ref{prop3dot11} can occur without $A$ being QF-1. 

\begin{example}
    Let $A$ be the Auslander algebra of $K[X]/(X^2).$  As illustrated by Corollary \ref{cor3dot14}, $A$ cannot be QF-1. Indeed, the top of the projective indecomposable module, $P$, with length 2 has dominant and codominant dimension zero. Moreover, $P$ and its top are the only indecomposable modules with codominant dimension zero. Here, $\soc \ P\cdot AeA$ is the socle of the projective-injective module while $\soc (\top P \cdot AeA)=0$, where $e$ is the idempotent associated with the projective-injective $A$-module.
\end{example}

As Example \ref{example3dot15} shows, the first condition in (3) of  Proposition \ref{prop3dot11} can occur without $A$ being QF-1. There, it is the second condition that fails.

Contrary to tilted Auslander algebras, QF-1 higher Auslander algebras are more common. In fact, the following example illustrates that there are QF-1 higher Auslander algebras of global dimension $g$ for every dimension $g\geq 2$.

\begin{example} \label{example4dot1}
    Let $K$ be an algebraically closed field and $n\geq 2$ a natural number. We define $A_n$ to be the bound quiver $K$-algebra
$\begin{tikzcd}
	1 \arrow[r, "\alpha_1"] & 2 \arrow[r, "\alpha_2"] & \cdots \arrow[r] & n
\end{tikzcd}$ with relations $0=\alpha_i\alpha_{i+1},$ $i=1, \ldots, n-1$. We write $S(i)$ to denote the simple module associated with the vertex $i$ and $P(i)$ the projective cover of $S(i)$ for every $i=1, \ldots, n$.
The module $P(1)\oplus P(2)\oplus \cdots \oplus P(n-1)$ is a faithful projective-injective module and the minimal injective resolution of $S(n)=P(n)$ coincides with the minimal projective resolution of $S(1)$. Hence, $\domdim A_n=n-1$. Moreover, $\pdim_A S(i)=n-i$ for every $i=1, \ldots, n$ and thus $\gldim A_n=n-1$ which means that $A_n$ is a higher Auslander algebra.

Observe that $\tau_{n-1}DA_n\cong\tau_{n-1}(S(1))\cong\tau(S(n-1)) \cong S(n)=P(n)$ is projective, so the torsion pair induced by the faithful projective-injective $(\gen (P(1)\oplus \cdots P(n-1)), \add P(n))$ splits. So, the algebra $A_n$ is QF-1. 
We can observe that $B:=\End(P(1)\oplus \cdots \oplus P(n-1))$ is the algebra $A_{n-1}$ (e.g it is semisimple if $n=2$) and $DP\cong \Hom_A(P, DA)$ is the right $B$-module with the same additive closure as $DB\oplus B$. That is, $A_{n-1}$ is $(n-2)$-representation-finite and by induction $A_n$ is $(n-1)$-representation-finite with $A_n\oplus DA_n$ as $(n-1)$-cluster tilting module.

By \citep[Theorem 1.19]{Iya2}, the algebras $A_n$ are Morita equivalent to Iyama's algebras $T_m^{(n-1)}(K)$ for some $m$. But since $A_2$ is Morita equivalent to the triangular matrix algebra $T_2(K)=T_2^{1}(K)$ it follows that $A_n$ is Morita equivalent to $T_2^{(n-1)}(K)$.
\end{example}

Actually,  if a QF-1 higher Auslander algebra is $g$-representation-finite, then the generator-cogenerator $A\oplus DA$ is the unique $g$-cluster tilting module.

\begin{proposition} \label{qf1nrephigher}
    Let $A$ be a higher Auslander algebra of global dimension $g$ and $P$ a faithful projective-injective $A$-module. If $A$ is QF-1 and $g$-representation-finite, then $A\oplus DA$ is a $g$-cluster tilting module and the algebra $\End_A(P)$ is $(g-1)$-representation-finite.
\end{proposition}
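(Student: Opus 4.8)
The plan is to establish the two assertions separately: first, and as the main point, that $A\oplus DA$ is a $g$-cluster tilting module, and then to deduce the $(g-1)$-representation-finiteness of $\End_A(P)$ from the higher Auslander correspondence. For the first, let $T$ be a $g$-cluster tilting $A$-module, which exists since $A$ is $g$-representation-finite. Every $g$-cluster tilting module is a generator-cogenerator, so $\add(A\oplus DA)\subseteq\add T$ and it suffices to prove the reverse inclusion, i.e. that every indecomposable summand $Y$ of $T$ is projective or injective. Suppose it is neither. As $A$ is QF-1, Morita's criterion (Theorem \ref{moritatheorem}) gives $\domdim Y+\codomdim Y\geq 1$; passing to $A^{\op}$ if necessary (it is again a QF-1 higher Auslander algebra of global dimension $g$, it is $g$-representation-finite with $g$-cluster tilting module $DT$, and $D$ interchanges projectives and injectives while swapping dominant and codominant dimension), we may assume $\domdim Y\geq 1$, so that $\pdim Y=g-\domdim Y\leq g-1$ by Corollary \ref{cor3dot8}.

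Here is the key observation. Since $A\in\add T$ and $T$ is $g$-cluster tilting, $\Ext_A^i(Y,A)=0$ for $1\leq i\leq g-1$; combined with $\pdim Y\leq g-1$, which kills all Ext groups in degrees $\geq g$, we obtain $\Ext_A^i(Y,A)=0$ for every $i\geq 1$. A module $Y$ of finite projective dimension with $\Ext_A^i(Y,A)=0$ for all $i\geq 1$ is projective: applying $\Hom_A(-,A)$ to a minimal projective resolution, the vanishing of all these Ext groups forces the top differential of the dual complex to be a split epimorphism onto a projective, contradicting minimality unless the resolution has length zero. Hence $Y$ is projective, a contradiction. Therefore every indecomposable summand of $T$ is projective or injective, so $\add T=\add(A\oplus DA)$ and $A\oplus DA$ is $g$-cluster tilting.

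It remains to show that $B:=\End_A(P)$ is $(g-1)$-representation-finite. Since $\domdim A=g\geq 2$, the higher Auslander correspondence writes $A\cong\End_B(M)$ for a $(g-1)$-cluster tilting $B$-module $M$, so $B$ already carries a $(g-1)$-cluster tilting module and only $\gldim B\leq g-1$ remains. By Theorem \ref{thm3dot19} (equivalently Corollary \ref{cordot13}) the QF-1 hypothesis forces $\add M=\add(B\oplus DB)$, and we have just shown that the $g$-cluster tilting module of $A$ is the trivial one $A\oplus DA$. That the base of a $g$-representation-finite higher Auslander algebra whose $g$-cluster tilting module is trivial has global dimension at most $g-1$ is exactly Iyama's higher Auslander correspondence for representation-finite algebras \cite{Iya2}; it is the higher analogue of the implication $(2)\Rightarrow(3)$ in Theorem \ref{theoremB}, where for $g=2$ the condition $\add M=\add(B\oplus DB)$ says that every indecomposable $B$-module is projective or injective, forcing $B$ to be hereditary. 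This yields $\gldim B\leq g-1$ and hence the claim.

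The main obstacle is the key observation of the second paragraph, namely the reduction of ``$Y$ is projective or injective'' to the global vanishing $\Ext_A^i(Y,A)=0$ for all $i\geq 1$. This succeeds precisely because QF-1, through Corollary \ref{cor3dot8}, bounds $\pdim Y$ by $g-1$ and thereby lets the finite-range cluster tilting vanishing propagate to all degrees; without the QF-1 hypothesis a middle summand could have projective dimension exactly $g$ and the argument would collapse. The remaining delicate point is the global-dimension bound for $B$ in the last paragraph, which is where the higher representation-finite machinery of \cite{Iya2} enters, rather than the homological formulas of Section 3.
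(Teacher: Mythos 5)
The first half of your argument is correct and takes a genuinely different route from the paper. The paper deduces $\tau_g^2(DA)=0$ from $\pdim \tau_g(DA)\leq g-1$, invokes the uniqueness statement \cite[Theorem 1.6]{Iya2} to identify the $g$-cluster tilting module as $DA\oplus\tau_g(DA)$, and then identifies $\tau_g(DA)$ with the sum of the projective non-injective indecomposables by a counting argument via \cite[Proposition 1.12]{Iya2} and Corollary \ref{cor3dot8}. You instead rule out directly any indecomposable summand $Y$ of an arbitrary $g$-cluster tilting module $T$ that is neither projective nor injective, combining Morita's criterion (Theorem \ref{moritatheorem}), Corollary \ref{cor3dot8} to get $\pdim Y\leq g-1$ after passing to $A^{\op}$ if necessary, the rigidity $\Ext_A^i(Y,A)=0$ for $1\leq i\leq g-1$, and the standard fact that $\Ext_A^{\pdim Y}(Y,A)\neq 0$ when $0<\pdim Y<\infty$. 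This is self-contained and avoids Iyama's uniqueness and counting results entirely; for the first assertion it is a clean alternative.

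The second half, however, has a genuine gap. What you actually establish before the final claim is: $B=\End_A(P)$ carries a $(g-1)$-cluster tilting module $M$ with $\End_B(M)\cong A$, and $\add M=\add(B\oplus DB)$ by Theorem \ref{thm3dot19}. You then assert that $\gldim B\leq g-1$ follows ``by Iyama's higher Auslander correspondence''. It does not: the correspondence imposes no bound whatsoever on $\gldim B$ (the base algebra in the Morita--Tachikawa/higher Auslander correspondence may even have infinite global dimension), and the unqualified principle ``trivial $(g-1)$-cluster tilting module implies $\gldim B\leq g-1$'' is refuted by the paper's own example: for the Nakayama algebra $A$ with Kupisch series $[2,3,3,3,3,2,2,1]$ --- a QF-1 higher Auslander algebra of global dimension $4$ --- one has $B=\End_A(P)$ equal to the Nakayama algebra with Kupisch series $[2,3,3,2,2,1]$ with $\add \Hom_A(P,DA)=\add(B\oplus DB)$, yet $B$ is \emph{not} $3$-representation-finite (indeed $\gldim B=4$), which the paper certifies via $\Ext_A^3(DA,A)\neq 0$ and \cite[Theorem 1.20]{Iya2}. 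So the inputs you feed into the final step, which use only QF-1 and never the $g$-representation-finiteness of $A$, genuinely cannot yield the conclusion. The missing link is to exploit what your own first half proved: since $A\oplus DA$ is $g$-cluster tilting it is rigid, hence $\Ext_A^i(DA,A)=0$ for $1\leq i\leq g-1$, and it is exactly this vanishing that powers Iyama's descent theorem \cite[Theorem 1.20]{Iya2}, which is the paper's route and which fails in the example above precisely because $\Ext_A^3(DA,A)\neq 0$ there. Your analogy with Theorem \ref{theoremB} is also misleading: for $g=2$ the condition ``$M$ is $1$-cluster tilting with $\add M=\add(B\oplus DB)$'' means $\add(B\oplus DB)=\rmod B$, i.e.\ every indecomposable is projective or injective, which is vastly stronger than $\add M=\add(B\oplus DB)$ inside a proper subcategory for $g>2$; the elementary syzygy argument does not survive the passage to higher $g$.
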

\begin{proof}
    By assumption, $\pdim_A \tau_g(DA)<g.$ Hence, since $\tau_g\cong D\Ext_A^g(-, A)$ we get that $\tau_g^2(DA)=0.$ So, $\oplus_{n\geq 0} \tau_g^n(DA)=DA\oplus \tau_g(DA).$ Since $A$ is $g$-representation-finite, it follows by \citep[Theorem 1.6]{Iya2} that $DA\oplus \tau_g(DA)$ is the unique $g$-cluster tilting $A$-module. So, all non-injective projective indecomposable occur as direct summands of $\tau_g(DA)$. The number of non-isomorphic direct summands of $\tau_g(DA)$ coincides with the number of non-isomorphic injective  indecomposable modules with projective dimension $g$ (see \citep[Proposition 1.12]{Iya2}). By Corollary \ref{cor3dot8}, these are precisely the injective non-projective indecomposable modules. Thus, $\tau_g(DA)$ must be the direct sum of the projective non-injective indecomposable modules. Thus, $A\oplus DA$ is a $g$-cluster tilting module and $\Ext_A^i(DA, A)=0$ for $i=1, \ldots, g-1.$ By \citep[Theorem 1.20]{Iya2}, $\End_A(P)$ is $(g-1)$-representation-finite.
\end{proof}

\begin{corollary}
Let $A$ be a higher Auslander algebra of global dimension $g$ that is also $g$-representation-finite. Then $A$ is isomorphic to $T_n^{(m)}$ for $n=2$.
\end{corollary}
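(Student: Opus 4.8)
The plan is to prove this by induction on $g$, using Proposition \ref{qf1nrephigher} to descend one homological level at a time. Write $P$ for a faithful projective-injective $A$-module and set $\Lambda':=\End_A(P)$. By Proposition \ref{qf1nrephigher} the module $A\oplus DA$ is the $g$-cluster tilting module of $A$ and $\Lambda'$ is $(g-1)$-representation-finite; by Theorem \ref{thm3dot19} (together with the Morita--Tachikawa correspondence) one may moreover identify $A\cong\End_{\Lambda'}(\Lambda'\oplus D\Lambda')$, so that $\Lambda'\oplus D\Lambda'$ is the $(g-1)$-cluster tilting module of $\Lambda'$. The key point of the inductive step is that $\Lambda'$ is again a QF-1 higher Auslander algebra, now of global dimension $g-1$; granting this, the induction hypothesis yields $\Lambda'\cong T_2^{(g-1)}$, and since the higher Auslander algebra of $T_2^{(g-1)}$ is $T_2^{(g)}$ (this is the tower of Example \ref{example4dot1}, where $A_{g+1}\cong T_2^{(g)}$ is the higher Auslander algebra of $A_g\cong T_2^{(g-1)}$), we obtain $A\cong\End_{\Lambda'}(\Lambda'\oplus D\Lambda')\cong T_2^{(g)}$.

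The base case $g=2$ is where the value $2$ is forced. Here $\Lambda'=\End_A(P)$ is $1$-representation-finite with $1$-cluster tilting module $\Lambda'\oplus D\Lambda'$; equivalently, $\Lambda'$ is hereditary and every indecomposable $\Lambda'$-module is projective or injective. By Theorem \ref{theoremB} (equivalence of (3) and (4)) and since $K$ is algebraically closed, $\Lambda'$ is the path algebra of $A_1$, $A_2$, or $A_3$ with non-linear orientation. The case $A_1$ gives a semisimple $A$ of global dimension $0\neq 2$, and I would exclude the non-linear $A_3$ case as follows: there $A$ is the Auslander algebra $C$ of Theorem \ref{theoremB}, and in $C$ the length-two indecomposable $N=\begin{smallmatrix}4\\5\end{smallmatrix}$ supported at the central vertex and one sink (notation of Theorem \ref{theoremB}) satisfies $\Ext_C^1(C\oplus DC,N)=0$ although $N\notin\add(C\oplus DC)$, so $C\oplus DC$ is not $2$-cluster tilting; as $C$ is QF-1, Proposition \ref{qf1nrephigher} then shows $C$ is not $2$-representation-finite, contradicting the hypothesis on $A$. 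Hence $\Lambda'\cong KA_2=T_2^{(1)}$ and $A\cong T_2^{(2)}$.

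I expect the main obstacle to be the inductive step itself, namely verifying that the descent $A\mapsto\End_A(P)$ transports the full package ``QF-1 $+$ higher Auslander $+$ representation-finite'' down one level. The representation-finiteness is exactly Proposition \ref{qf1nrephigher}; that $\Lambda'$ is again higher Auslander and QF-1 should follow from Iyama's tower structure for higher Auslander algebras (\citep[Theorem 1.20]{Iya2}) combined with the local characterisation of Theorem \ref{TheoremC} applied to $\Lambda'$, but making this transfer precise is the delicate part. The remaining ingredient, the $\Ext$-computation ruling out the non-linear $A_3$ in the base case, is an elementary verification in the explicit six-vertex algebra of Theorem \ref{theoremB}.
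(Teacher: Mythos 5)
Your overall strategy --- descending along the tower $A\mapsto \End_A(P)$ and inducting on $g$ --- is genuinely different from the paper's, but it has a real gap exactly where you flag it: the inductive step. You need $\Lambda'=\End_A(P)$ to be again a QF-1 higher Auslander algebra of global dimension $g-1$, and nothing you cite delivers this. Proposition \ref{qf1nrephigher} only gives that $\Lambda'$ is $(g-1)$-representation-finite; it says nothing about $\gldim \Lambda'=\domdim \Lambda'=g-1$, nor about $\Lambda'$ being QF-1, and neither does \citep[Theorem 1.20]{Iya2} or Theorem \ref{TheoremC} (whose two conditions you would have to verify for $\Lambda'$ from scratch). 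That this transfer is not formal is visible at the bottom of your own tower: for $g-1=1$ the algebra $\Lambda'$ is hereditary with every indecomposable projective or injective, and for non-linear $A_3$ one has $\domdim \Lambda'=0\neq 1=\gldim \Lambda'$, so ``QF-1 $+$ higher Auslander'' does \emph{not} persist under the descent in general. You sidestep this particular failure by making $g=2$ the base case, but for $g-1\geq 2$ the needed statement is essentially equivalent to the corollary itself (granting the corollary, $\Lambda'\cong T_2^{(g-1)}$ is indeed QF-1 higher Auslander --- but using that is circular).

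The paper does not induct at all: by Proposition \ref{qf1nrephigher} the unique $g$-cluster tilting module of $A$ is $A\oplus DA$, and Iyama's characterisation and explicit construction of the cluster tilting modules over the algebras $T_n^{(m)}$ (Theorem 1.19 and Section 3 of \cite{Iya2}, cf.\ Example \ref{example4dot1}) then identify $A$ with some $T_n^{(m)}$, with $n>2$ excluded because there the unique $g$-cluster tilting module has summands that are neither projective nor injective. If you want to keep the induction, the honest repair is to import this same input from \cite{Iya2} --- at which point the induction becomes superfluous. Two smaller points: your base case invokes Theorem \ref{theoremB}, which is stated only over an algebraically closed field, whereas the corollary carries no such hypothesis (and the paper elsewhere works over $\mathbb{F}_3$); the route through \cite{Iya2} avoids this. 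On the other hand, your reading of the statement as including the QF-1 hypothesis of Proposition \ref{qf1nrephigher} is the correct one (without it, $T_3^{(2)}$ is a counterexample), and your exclusion of the non-linear $A_3$ base case via an $\Ext$-witness showing $C\oplus DC$ is not $2$-cluster tilting is a legitimate finite check; but these correct pieces do not close the main gap.
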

\begin{proof}
We already saw in the previous example that $T_2^{(m)}$, which are the Nakayama algebras with Kupisch series [2,2,....,2,1], have this property.
The corollary follows now by Proposition \ref{qf1nrephigher} and the fact that for $n>2$ the unique $g$-cluster tilting module is not of the form $A \oplus D(A)$ for $A=T_n^{(m)}$, see the construction of those cluster-tilting modules in Section 3 of \cite{Iya2}.  
\end{proof}
The next example illustrates that the associated algebra $eAe$ of a QF-1 higher Auslander algebra $A$ of global dimension $g$ is not necessarily $(g-1)$-representation-finite.

\begin{example}
    Let $A$ be the Nakayama algebra with Kupisch series $[2, 3, 3, 3, 3, 2, 2, 1]$. 
    The Auslander-Reiten quiver is the following

\[
\begin{tikzpicture}[scale=0.8,
fl/.style={->,shorten <=6pt, shorten >=6pt,>=latex}]
\coordinate (13) at (0,0) ;
\coordinate (14) at (1,1) ;
\coordinate (15) at (2,2) ;
\coordinate (24) at (2,0) ;
\coordinate (25) at (3,1) ;
\coordinate (26) at (4,2) ;
\coordinate (35) at (4,0) ;
\coordinate (36) at (5,1) ;
\coordinate (37) at (6,2) ;
\coordinate (46) at (6,0) ;
\coordinate (47) at (7,1) ;
\coordinate (48) at (8,2) ;
\coordinate (57) at (8,0) ;
\coordinate (58) at (9,1) ;
\coordinate (59) at (10,2) ;
\coordinate (68) at (10,0) ;
\coordinate (69) at (11,1) ;
\coordinate (610) at (12,2) ;
\coordinate (79) at (12,0) ;
\coordinate (710) at (13,1) ;
\coordinate (711) at (14,2) ;
\coordinate (810) at (14,0) ;


\draw[fl] (13) -- (14) ;
\draw[fl] (14) -- (24) ;
\draw[fl] (24) --(25) ;
\draw[fl] (25) --(35) ;
\draw[fl] (25) --(26) ;
\draw[fl] (35) --(36) ;
\draw[fl] (36) --(37) ;
\draw[fl] (26) --(36) ;
\draw[fl] (46) --(47) ;
\draw[fl] (47) --(48) ;
\draw[fl] (37) --(47) ;
\draw[fl] (36) --(46) ;
\draw[fl] (57) --(58) ;
\draw[fl] (58) --(59) ;
\draw[fl] (48) --(58) ;
\draw[fl] (47) --(57) ;
\draw[fl] (58) --(68) ;
\draw[fl] (68) --(69) ;
\draw[fl] (59) --(69) ;
\draw[fl] (79) --(710) ;
\draw[fl] (69) --(79) ;
\draw[fl] (710) --(810) ;
\draw (13) node[scale=1] {$8$} ;
\draw (14) node[scale=1] {$\begin{smallmatrix}
     7 \\ 8 
 \end{smallmatrix}$} ;
\draw (24) node[scale=1] {$7$} ;
\draw (25) node[scale=1] {$\begin{smallmatrix}
     6 \\ 7
 \end{smallmatrix}$} ;
\draw (26) node[scale=1] {$\begin{smallmatrix}
     5\\ 6\\ 7
 \end{smallmatrix}$} ;
\draw (35) node[scale=1] {$6$} ;
\draw (36) node[scale=1] {$\begin{smallmatrix}
     5 \\ 6
 \end{smallmatrix}$} ;
\draw (37) node[scale=1] {$\begin{smallmatrix}
     4 \\ 5 \\ 6
 \end{smallmatrix}$} ;
\draw (46) node[scale=1] {$5$} ;
\draw (47) node[scale=1] {$\begin{smallmatrix}
     4 \\ 5
 \end{smallmatrix}$} ;
\draw (48) node[scale=1] {$\begin{smallmatrix}
     3\\ 4\\ 5
 \end{smallmatrix}$} ;
\draw (57) node[scale=1] {$4$} ;
\draw (58) node[scale=1] {$\begin{smallmatrix}
     3 \\ 4
 \end{smallmatrix}$} ;
\draw (59) node[scale=1] {$\begin{smallmatrix}
     2 \\ 3 \\ 4
 \end{smallmatrix}$} ;
\draw (68) node[scale=1] {$3$} ;
\draw (69) node[scale=1] {$\begin{smallmatrix}
     2 \\ 3
 \end{smallmatrix}$} ;
\draw (79) node[scale=1] {$2$} ;
\draw (710) node[scale=1] {$\begin{smallmatrix}
      1 \\ 2
 \end{smallmatrix}$} ;
\draw (810) node[scale=1] {$1$} ;
\draw[thick, dashed, red] (-0.5,-0.7) --   (4.5, -0.7) -- (4, 0.7)  -- (2, -0.5) -- (0, 0.7) --  cycle ;
\draw[thick, dashed, blue]  (14.5, -0.5) -- (13, 2.5) -- (-0.5, 2.5) -- (1.6, -0.2) -- (2.3, -0.2) -- (3, 2) -- (6, -0.5) -- cycle;
\end{tikzpicture}
    \]

    $A$ has global dimension $4$ and dominant dimension $4$. Here, the direct sum of the projective covers of the simple modules indexed by $1, 2, 3, 4, 5$ and $7$, $P$, is the faithful projective-injective $A$-module.
    So, $B=\End_A(P)$ is the Nakayama algebra with Kupisch series $[2, 3, 3, 2, 2, 1]$ and $\add \Hom_A(P, DA)=\add B\oplus DB$.
    From the Auslander-Reiten quiver, it is clear that $A$ is QF-1. Under the dashed blue, it is marked the subcategory of indecomposable modules with positive codominant dimension while in the red it is marked the submodules of $\tau_g(DA)$. 
    Since $A$ is QF-1, $\tau_g^2(DA)=0$. On the other hand $DA\oplus \tau_g(DA)$ is not a generator, and thus $DA\oplus \tau_g(DA)$ cannot be cluster tilting. This means that $A$ is not $4$-representation-finite. Observe now that $\Ext_A^3(DA, A)\neq 0$ since $\Ext_A^3(\begin{smallmatrix}
        2 \\3
    \end{smallmatrix}, \begin{smallmatrix}
        6 \\ 7
    \end{smallmatrix})\cong \Ext_A^1(\begin{smallmatrix}
        5 \\6 
    \end{smallmatrix}, \begin{smallmatrix}
        6 \\ 7
    \end{smallmatrix})\neq 0$. By \citep[Theorem 1.20]{Iya2}, $\End_A(P)$ is not $3$-representation-finite.
\end{example}

Next we give an example that illustrates that there is no version of Lemma \ref{formulahighertauinverse} for $\tau_g(DA)$ and $\overline{A}$.
\begin{example}
Let $A$ be the Nakayama algebra with Kupisch series $[5, 5, 5, 5, 5, 7, 6]$. Then, $A$ is a higher Auslander algebra of global dimension $5$, the unique simple direct summand of $\tau_5(DA)$ is isomorphic to the simple module with top $7$ while the unique simple direct summand of $\overline{A}$ is isomorphic to simple modules with top $1$. 
\end{example}

The next example gives a non-trivial example of a QF-1 higher Auslander algebra that is not of finite representation type.
\begin{example} \label{nontrivialexampleQF1highaus}
Let $K$ be the finite field with three elements and consider the bound quiver algebra $A=KQ/I$ with quiver $Q$ given by
\[\begin{tikzcd}[column sep=9ex,row sep=9ex]
	&& 6 &&& 7 \\
	2 & 3 & 4 & 5 & 10 & 11 & 8 & 9 \\
	&& 1 &&& 12
	\arrow["{a_9}"{description}, shift left=1.5, from=1-3, to=1-6]
	\arrow["{a_{10}}"{description}, shift right=1.5, from=1-3, to=1-6]
	\arrow["{a_6}"{description}, from=1-6, to=2-5]
	\arrow["{a_5}"{description}, from=1-6, to=2-6]
	\arrow["{a_8}"{description}, from=1-6, to=2-7]
	\arrow["{a_7}"{description}, from=1-6, to=2-8]
	\arrow["{a_{14}}"{description}, from=2-1, to=1-3]
	\arrow["{a_{13}}"{description}, from=2-2, to=1-3]
	\arrow["{a_{12}}"{description}, from=2-3, to=1-3]
	\arrow["{a_{11}}"{description}, from=2-4, to=1-3]
	\arrow["{a_2}"{description}, from=2-5, to=3-6]
	\arrow["{a_1}"{description}, from=2-6, to=3-6]
	\arrow["{a_4}"{description}, from=2-7, to=3-6]
	\arrow["{a_3}"{description}, from=2-8, to=3-6]
	\arrow["{a_{18}}"{description}, from=3-3, to=2-1]
	\arrow["{a_{17}}"{description}, from=3-3, to=2-2]
	\arrow["{a_{16}}"{description}, from=3-3, to=2-3]
	\arrow["{a_{15}}"{description}, from=3-3, to=2-4]
\end{tikzcd}\]
and relations $$I=\left\langle \begin{alignedat}{2} a_{18}a_{14}+a_{17} a_{13}-2 a_{16} a_{12}, &a_{17}a_{13}-a_{16} a_{12}+a_{15} a_{11}, a_{14} a_9-a_{14}a_{10}, a_{13}a_9+ a_{13} a_{10},\\ a_{12} a_9, a_{11} a_{10}, &a_9 a_6, 
  a_9 a_8- a_{10} a_8, a_9 a_7+a_{10} a_7, a_{10} a_5, \\ a_8 a_4-a_7 &a_3-2a_6 a_2, a_7 a_3+a_6 a_2-a_5a_1 \end{alignedat} \right\rangle.$$
Clearly, $A$ is of infinite representation type as it contains a Kroenecker subquiver at the vertices 6 to 7 and $A$ is a higher Auslander algebra of global dimension 3.
Thus directly using Morita's theorem \ref{moritatheorem} is not possible in a finite time to deduce that $A$ is QF-1. But our Theorem \ref{TheoremC} requires only to check two conditions and these can be done in finite time although it is quite tedious for this example. By Theorem  \ref{TheoremC}, $A$ is QF-1. In the Appendix \ref{appendix}, we illustrate how can this fact be verified using the computer algebra system \cite{QPA}.
\end{example}

\appendix

\section{Testing whether a higher Auslander algebra is QF-1 using QPA}
\label{appendix}

This appendix is intended for the readers who are familiar with the GAP package \cite{QPA} and would like to verify whether a given bound quiver algebra is a QF-1 higher Auslander algebra. To test whether a given higher Auslander algebra is QF-1 we use the following GAP program:

\begin{tiny}
\begin{verbatim}
DeclareOperation("QF1testforhigherAuslanderalgebras",[IsList]);

InstallMethod(QF1testforhigherAuslanderalgebras, "for higher Auslander algebras", [IsList],0,function(LIST)

local A,CoRegA,g,B,projA,projB,U1,U2,W,i,T,UL1,UUL1,AmAfA,RegA;

A:=LIST[1];
g:=GlobalDimensionOfAlgebra(A,33);
B:=OppositeAlgebra(A);
projB:=IndecProjectiveModules(B);
UL1:=Filtered(projB,x->IsInjectiveModule(x)=true);
UUL1:=DirectSumOfQPAModules(UL1);
Af:=StarOfModule(UUL1);
RegA:=DirectSumOfQPAModules(IndecProjectiveModules(A));
AmAfA:=CoKernel(TraceOfModule(Af,RegA));
projA:=IndecProjectiveModules(A);
U1:=Filtered(projA,x->IsInjectiveModule(x)=false);
projB:=IndecProjectiveModules(B);
U2:=Filtered(projB,x->IsInjectiveModule(x)=false);
W:=[];for i in U2 do for j in U1 do Append(W,[Size(HomOverAlgebra(StarOfModule(i),j))]);od;od;
return(InjDimensionOfModule(AmAfA,g)<=g-1 and Maximum(W)=0);

end);
\end{verbatim}
\end{tiny}

This program can be entered directly into GAP terminal after loading the QPA package (in case it is not loaded automatically). The program returns true if a given higher Auslander algebra (given by quiver and relations) is QF-1 and returns false otherwise. 

Before using the program the reader is advised to check first whether a given algebra $A$ is a higher Auslander algebra by evaluating whether the following two functions return the same value for large enough $n\in \mathbb{N}:$

\begin{tiny}
\begin{verbatim}
    g:=GlobalDimensionOfAlgebra(A,n);
    d:=DominantDimensionOfAlgebra(A,n);
\end{verbatim}
\end{tiny}

We can enter the algebra in Example \ref{nontrivialexampleQF1highaus} in QPA and test if is a higher Auslander algebra as follows:
\begin{tiny}
\begin{verbatim}
Q:=Quiver( ["v1","v2","v3","v4","v5","v6","v7","v8","v9","v10","v11","v12"], [["v1","v2","a18"],
["v1","v3","a17"],["v1","v4","a16"],["v1","v5","a15"],["v2","v6","a14"],
["v3","v6","a13"],["v4","v6","a12"],["v5","v6","a11"],["v6","v7","a9"],["v6","v7","a10"],
["v7","v8","a8"],["v7","v9","a7"],["v7","v10","a6"],["v7","v11","a5"],["v8",
"v12","a4"],["v9","v12","a3"],["v10","v12","a2"],["v11","v12","a1"]] );KQ:=PathAlgebra(GF(3),Q);AssignGeneratorVariables(KQ);rel:=
[ (1)*a18*a14+(1)*a17*a13+(-2)*a16*a12, (1)*a17*a13+(-1)*a16*a12+(1)*a15*a11, 
(1)*a14*a9+(-1)*a14*a10, (1)*a13*a9+(1)*a13*a10, (1)*a12*a9, 
(1)*a11*a10, (1)*a9*a6, 
  (1)*a9*a8+(-1)*a10*a8, (1)*a9*a7+(1)*a10*a7, (1)*a10*a5, 
  (1)*a8*a4+(-1)*a7*a3+(-2)*a6*a2, (1)*a7*a3+(1)*a6*a2+(-1)*a5*a1 ];A:=KQ/rel;
g:=GlobalDimensionOfAlgebra(A,33);
d:=DominantDimensionOfAlgebra(A,33);g=d;
\end{verbatim}
\end{tiny}
We indeed see that this algebra has global and dominant dimension equal to three and then we use the program to see that it is indeed QF1 as follows:
\begin{tiny}
\begin{verbatim}
QF1testforhigherAuslanderalgebras([A]);
\end{verbatim}
\end{tiny}

\section*{Acknowledgement}
The second author is thankful to Osamu Iyama for useful discussions regarding Proposition \ref{taustablehomformula}. The proof of Proposition \ref{inequpdim} is due to {\O}yvind Solberg and we are thankful to him for allowing us to use his proof here. We profited from the use of the GAP-package \cite{QPA}.
\bibliographystyle{alphaurl}
\bibliography{bibarticle}

\begin{thebibliography}{QPA22}

\bibitem[AK96]{zbMATH00941221}
Ibrahim Assem and Otto Kerner.
\newblock Constructing torsion pairs.
\newblock {\em J. Algebra}, 185(1):19--41, 1996.
\newblock \href {https://doi.org/10.1006/jabr.1996.0310}
  {\path{doi:10.1006/jabr.1996.0310}}.

\bibitem[APT92]{APT}
Maurice Auslander, Mar\'ia~In\'es Platzeck, and Gordana Todorov.
\newblock Homological theory of idempotent ideals.
\newblock {\em Trans. Am. Math. Soc.}, 332(2):667--692, 1992.
\newblock \href {https://doi.org/10.2307/2154190} {\path{doi:10.2307/2154190}}.

\bibitem[ARS97]{ARS}
Maurice Auslander, Idun Reiten, and Sverre~O. Smal\o.
\newblock {\em Representation theory of {A}rtin algebras}, volume~36 of {\em
  Cambridge Studies in Advanced Mathematics}.
\newblock Cambridge University Press, Cambridge, 1997.
\newblock Corrected reprint of the 1995 original.

\bibitem[ASS06]{ASS}
Ibrahim Assem, Daniel Simson, and Andrzej Skowro\'nski.
\newblock {\em Elements of the representation theory of associative algebras.
  {V}ol. 1}, volume~65 of {\em London Mathematical Society Student Texts}.
\newblock Cambridge University Press, Cambridge, 2006.
\newblock \href {https://doi.org/10.1017/CBO9780511614309}
  {\path{doi:10.1017/CBO9780511614309}}.

\bibitem[CIM24]{CIM}
Xiao-Wu Chen, Srikanth~B. Iyengar, and Ren{\'e} Marczinzik.
\newblock Homological dimensions of the {Jacobson} radical.
\newblock {\em Proc. Am. Math. Soc., Ser. B}, 11:211--223, 2024.
\newblock \href {https://doi.org/10.1090/bproc/187}
  {\path{doi:10.1090/bproc/187}}.

\bibitem[CP24]{CPsa}
Tiago Cruz and Chrysostomos Psaroudakis.
\newblock Relative {Auslander--Gorenstein} pairs, 2024.
\newblock \href {http://arxiv.org/abs/2302.10704} {\path{arXiv:2302.10704}}.

\bibitem[Ful68]{MR232795}
Kent~R. Fuller.
\newblock Generalized uniserial rings and their {K}upisch series.
\newblock {\em Math. Z.}, 106:248--260, 1968.
\newblock \href {https://doi.org/10.1007/BF01110273}
  {\path{doi:10.1007/BF01110273}}.

\bibitem[GLS08]{GLS}
Christof Geiss, Bernard Leclerc, and Jan Schr{\"o}er.
\newblock Preprojective algebras and cluster algebras.
\newblock In {\em Trends in representation theory of algebras and related
  topics. Proceedings of the 12th international conference on representations
  of algebras and workshop (ICRA XII), Toru\'n, Poland, August 15--24, 2007.},
  pages 253--283. Z{\"u}rich: European Mathematical Society (EMS), 2008.

\bibitem[HRS96]{HRS}
Dieter Happel, Idun Reiten, and Sverre~O. Smal{\o}.
\newblock {\em Tilting in abelian categories and quasitilted algebras}, volume
  575 of {\em Mem. Am. Math. Soc.}
\newblock Providence, RI: American Mathematical Society (AMS), 1996.
\newblock \href {https://doi.org/10.1090/memo/0575}
  {\path{doi:10.1090/memo/0575}}.

\bibitem[IR08]{IR}
Osamu Iyama and Idun Reiten.
\newblock Fomin-{Zelevinsky} mutation and tilting modules over {Calabi}-{Yau}
  algebras.
\newblock {\em Am. J. Math.}, 130(4):1087--1149, 2008.
\newblock \href {https://doi.org/10.1353/ajm.0.0011}
  {\path{doi:10.1353/ajm.0.0011}}.

\bibitem[Iya07]{Iya}
Osamu Iyama.
\newblock Higher-dimensional {A}uslander-{R}eiten theory on maximal orthogonal
  subcategories.
\newblock {\em Adv. Math.}, 210(1):22--50, 2007.
\newblock \href {https://doi.org/10.1016/j.aim.2006.06.002}
  {\path{doi:10.1016/j.aim.2006.06.002}}.

\bibitem[Iya08]{Iya3}
Osamu Iyama.
\newblock Auslander-{Reiten} theory revisited.
\newblock In {\em Trends in representation theory of algebras and related
  topics. Proceedings of the 12th international conference on representations
  of algebras and workshop (ICRA XII), Toru\'n, Poland, August 15--24, 2007.},
  pages 349--397. Z{\"u}rich: European Mathematical Society (EMS), 2008.

\bibitem[Iya11]{Iya2}
Osamu Iyama.
\newblock Cluster tilting for higher {Auslander} algebras.
\newblock {\em Adv. Math.}, 226(1):1--61, 2011.
\newblock \href {https://doi.org/10.1016/j.aim.2010.03.004}
  {\path{doi:10.1016/j.aim.2010.03.004}}.

\bibitem[J{\o}r98]{MR1644217}
Peter J{\o}rgensen.
\newblock Non-commutative graded homological identities.
\newblock {\em J. London Math. Soc. (2)}, 57(2):336--350, 1998.
\newblock \href {https://doi.org/10.1112/S0024610798006164}
  {\path{doi:10.1112/S0024610798006164}}.

\bibitem[Li23]{S}
Shen Li.
\newblock Higher {Auslander} algebras of finite representation type, 2023.
\newblock \href {http://arxiv.org/abs/2308.10433} {\path{arXiv:2308.10433}}.

\bibitem[Miy00]{Mi}
Jun-ichi Miyachi.
\newblock Injective resolutions of {Noetherian} rings and cogenerators.
\newblock {\em Proc. Am. Math. Soc.}, 128(8):2233--2242, 2000.
\newblock \href {https://doi.org/10.1090/S0002-9939-00-05305-3}
  {\path{doi:10.1090/S0002-9939-00-05305-3}}.

\bibitem[Mor58]{zbMATH03171195}
Kiiti Morita.
\newblock On algebras for which every faithful representation is its own second
  commutator.
\newblock {\em Math. Z.}, 69:429--434, 1958.
\newblock \href {https://doi.org/10.1007/BF01187420}
  {\path{doi:10.1007/BF01187420}}.

\bibitem[Nis88]{MR968204}
Kenji Nishida.
\newblock A characterization of {G}orenstein orders.
\newblock {\em Tsukuba J. Math.}, 12(2):459--468, 1988.
\newblock \href {https://doi.org/10.21099/tkbjm/1496160842}
  {\path{doi:10.21099/tkbjm/1496160842}}.

\bibitem[QPA22]{QPA}
The QPA-team.
\newblock {\em QPA -- Quivers, path algebras and representations -- a GAP
  package, Version 1.33}, 2022.
\newblock URL: \url{https://folk.ntnu.no/oyvinso/QPA/}.

\bibitem[Rin73]{R}
Claus~Michael Ringel.
\newblock {${\rm QF}-1$} rings of global dimension {$\leq 2$}.
\newblock {\em Canadian J. Math.}, 25:345--352, 1973.
\newblock \href {https://doi.org/10.4153/CJM-1973-033-7}
  {\path{doi:10.4153/CJM-1973-033-7}}.

\bibitem[RT75]{RT}
Claus~Michael Ringel and Hiroyuki Tachikawa.
\newblock {QF}-3 rings.
\newblock {\em J. Reine Angew. Math.}, 272:49--72, 1975.
\newblock URL: \url{https://eudml.org/doc/151523}.

\bibitem[Tac73]{Tac2}
Hiroyuki Tachikawa.
\newblock {\em Quasi-{F}robenius rings and generalizations. {${\rm QF}-3$} and
  {${\rm QF}-1$} rings}, volume Vol. 351 of {\em Lecture Notes in Mathematics}.
\newblock Springer-Verlag, Berlin-New York, 1973.
\newblock Notes by Claus Michael Ringel.

\bibitem[Thr48]{T}
Robert~McDowell Thrall.
\newblock Some generalizations of quasi-{Frobenius} algebras.
\newblock {\em Trans. Am. Math. Soc.}, 64:173--183, 1948.
\newblock \href {https://doi.org/10.2307/1990561} {\path{doi:10.2307/1990561}}.

\bibitem[WZ00]{MR1741569}
Q.~S. Wu and J.~J. Zhang.
\newblock Some homological invariants of local {PI} algebras.
\newblock {\em J. Algebra}, 225(2):904--935, 2000.
\newblock \href {https://doi.org/10.1006/jabr.1999.8179}
  {\path{doi:10.1006/jabr.1999.8179}}.

\bibitem[WZ01]{MR1848957}
Q.-S. Wu and J.~J. Zhang.
\newblock Homological identities for noncommutative rings.
\newblock {\em J. Algebra}, 242(2):516--535, 2001.
\newblock \href {https://doi.org/10.1006/jabr.2001.8817}
  {\path{doi:10.1006/jabr.2001.8817}}.

\bibitem[Yam96]{Yam}
Kunio Yamagata.
\newblock Frobenius algebras.
\newblock In {\em Handbook of algebra, {V}ol. 1}, volume~1 of {\em Handb.
  Algebr.}, pages 841--887. Elsevier/North-Holland, Amsterdam, 1996.
\newblock \href {https://doi.org/10.1016/S1570-7954(96)80028-3}
  {\path{doi:10.1016/S1570-7954(96)80028-3}}.

\bibitem[Zit20]{Z}
Stephen Zito.
\newblock Auslander algebras which are tilted.
\newblock {\em Arch. Math. (Basel)}, 115(3):241--246, 2020.
\newblock \href {https://doi.org/10.1007/s00013-020-01471-2}
  {\path{doi:10.1007/s00013-020-01471-2}}.

\end{thebibliography}

\end{document}